\documentclass{amsart}
\usepackage[utf8]{inputenc}
\usepackage{amsmath}
\usepackage{amssymb}
\usepackage{mathrsfs}
\usepackage{graphicx}

\usepackage[colorlinks]{hyperref}
\usepackage[svgnames, dvipsnames, usenames]{xcolor}
\hypersetup{urlcolor = RedViolet, linkcolor = RoyalBlue, citecolor = ForestGreen}

\usepackage{tikz}
\usetikzlibrary{intersections}

\usepackage{strdiag}

\hyphenation{coiso-metry Hada-mard}

\def\ppen{\penalty 300 }
\let\col=\colon
\def\colon{\col\ppen}

\theoremstyle{plain} 

\newtheorem{thm}{Theorem}[section]
\newtheorem{prop}[thm]{Proposition}
\newtheorem{lem}[thm]{Lemma}
\newtheorem{cor}[thm]{Corollary}

\theoremstyle{definition}
\newtheorem{defn}[thm]{Definition}
\newtheorem{rem}[thm]{Remark}
\newtheorem{ex}[thm]{Example}

\newtheorem{nota}[thm]{Notation}

\numberwithin{equation}{section}

\renewcommand{\theta}{\vartheta}
\renewcommand{\phi}{\varphi}
\renewcommand{\epsilon}{\varepsilon}
\renewcommand{\subset}{\subseteq}

\newcommand{\Ass}{\mathbf{A}}
\newcommand{\Bss}{\mathbf{B}}
\newcommand{\Alg}{\mathscr{A}}
\newcommand{\Blg}{\mathscr{B}}
\newcommand{\C}{\mathbb{C}}
\newcommand{\Lin}{\mathscr{L}}
\newcommand{\id}{\mathrm{id}}
\newcommand{\N}{\mathbb{N}}

\newcommand{\Z}{\mathbb{Z}}
\newcommand{\Ctrans}{^\mathsf{T}}
\newcommand{\CTr}{\mathop{\mathsf{Tr}}}
\DeclareMathOperator{\spanlin}{span}
\DeclareMathOperator{\Tr}{Tr}
\DeclareMathOperator{\Irr}{Irr}

\begin{document}
\title{Quantum association schemes}
\author{Daniel Gromada}
\address{Czech Technical University in Prague, Faculty of Electrical Engineering, Department of Mathematics, Technická 2, 166 27 Praha 6, Czechia}
\email{gromadan@fel.cvut.cz}
\thanks{I would like to thank Chris Godsil for answering my question on MathOverflow, by which he pointed my attention to the duality of association schemes. I would like to thank Junichiro Matsuda for discussions about quantum Cayley graphs.}
\thanks{I thank to Peter Zeman and Paweł Kasprzak for pointing out certain mistakes in an earlier version of this manuscript.}
\date{\today}
\subjclass{05C25, 05E30, 16T30, 18M40, 20G42}
\keywords{Hopf algebra, quantum group, quantum graph, association scheme, duality, Schur ring}

\begin{abstract}
We introduce quantum association schemes. This allows to define distance regular and strongly regular quantum graphs. We bring examples thereof. In addition, we formulate the duality for translation quantum association schemes corresponding to finite quantum groups.
\end{abstract}

\maketitle
\section*{Introduction}
An \emph{association scheme} over a finite set $X$ is a partition $R$ of $X\times X$ satisfying certain properties. It can also be viewed as a set of directed graphs $\{G_0,\dots,G_d\}$ over the vertex set $X$ such that their sets of edges partition $X\times X$. A third viewpoint, which we will consider throughout this paper, is obtained by replacing the graphs with their adjacency matrices. That is, an association scheme over $X=\{1,\dots,n\}$ is a set $\Ass=\{A_0,\dots,A_d\}$ of $n\times n$ matrices with entries in $\{0,1\}$ satisfying certain axioms including $\sum_{i=0}^d A_i=J$, where $J$ is the all-one-matrix.

We give a precise definition as Def.~\ref{D.Ass}. Nevertheless, these certain axioms can be equivalently summarized by the condition that the matrices span a \emph{coherent algebra}. That is, $\Alg:=\spanlin\{A_0,\dots,A_d\}$ should be closed under the composition of matrices, the conjugate transposition (denoted by $\dag$), but also under the \emph{Schur product} (multiplying matrices entrywise, here denoted by $\bullet$) and the complex conjugation (denoted by $*$; but this actually holds automatically as matrices with entries in $\{0,1\}$ are real). This coherent algebra is often referred to as the \emph{Bose--Mesner algebra} of the association scheme. Note that the classical Schur product is automatically commutative since the multiplication of complex numbers is commutative. On the other hand, the composition of matrices need not be. Nevertheless, the literature mostly focuses on \emph{commutative} association schemes, where $\Alg$ actually is commutative. In particular, it holds that if all the matrices $A_0,\dots,A_d$ are self-adjoint (i.e.\ if the graphs are undirected), then the association scheme is commutative. It is also worth mentioning at this point that the original set of matrices $\{A_0,\dots,A_d\}$ can be recovered from $\Alg$ as the set of minimal projections with respect to the Schur product.

The purpose of this article is to \emph{quantize} the definition of association schemes in a similar manner as quantum groups generalize groups. The definition of a \emph{quantum graph} already appeared in the literature recently \cite{DSW13,Wea12,Wea21,MRV18}. While the original motivation came from quantum information theory, it raised a lot of interest also in the mathematical community of compact quantum groups. A crucial part of the definition of a quantum graph is the notion of a quantum space. A classical finite set $X=\{1,\dots,n\}$ can be described by an $n$-dimensional commutative algebra (the algebra of all functions on $X$), which contains a distinguished basis of $n$ projections, which can be identified with the elements of the original set $X$. A finite \emph{quantum} space $X$ is then described by a finite-dimensional \emph{non-commutative} C*-algebra $C(X)$. Here, $C(X)$ does not have any preferred basis, so the finite quantum space does not have any actual elements; $X$ is then just a formal symbol defined via the algebra $C(X)$. A quantum graph on a quantum set $X$ is then defined by an adjacency matrix $A\colon l^2(X)\to l^2(X)$ satisfying certain properties. The structure of a finite quantum space then allows to define the Schur product of such maps which now may not be commutative anymore.

Now, one idea to quantize the definition of an association scheme might be to replace the finite classical set $X$ by a finite quantum space and replace the adjacency matrices $A_0,\dots,A_d$ by some quantum ones. This definition has a possibly disappointing consequence, namely that the associated coherent algebra $\spanlin\{A_0,\dots,A_d\}$ will again be commutative with respect to the Schur product. We are going to call them \emph{cocommutative quantum association schemes} and study them in Section \ref{sec.ccqas}. We are going to show that there is no need for disappointment as even such a setting provides us interesting results and examples. Indeed, even though the coherent algebra is commutative with respect to the Schur product, all the graphs involved are proper quantum graphs as they are defined over a non-commutative quantum space. Moreover, in Section~\ref{secc.Hadamard}, we provide an example of a cocommutative quantum association scheme based on quantum Hadamard graphs, which is not quantum isomorphic to any classical association scheme. Apart from that, we define and study distance regular and strongly regular quantum graphs in Section~\ref{sec.drg}, which form a particular class of quantum association schemes.

If we require the coherent algebra to be non-commutative with respect to the Schur product, it is not enough to replace the underlying set by a quantum space and the graphs by quantum graphs. We also need to replace the quantum association scheme itself: the \emph{set} of (quantum) graphs has to become a quantum space. That is, we describe the quantum association scheme by an algebra -- the coherent algebra --, which is now non-commutative with respect to the Schur product, so there is no basis of Schur projections and hence there are no distinct elements of the quantum association scheme. 

Note that association schemes are often presented as a generalization of groups. Indeed, for any finite group $\Gamma$, we can define an association scheme over $\Gamma$ by considering the partition $R_g=\{(h,gh)\mid h\in\Gamma\}$. We will call it the \emph{group scheme}. Taking the graph viewpoint, the group scheme is made out of Cayley graphs of $\Gamma$ with respect to sets containing always just one element $\{g\}$. For that reason, quantum coherent algebras can be understood as generalizations of finite quantum groups.

Finally, the main motivation for writing this article and defining quantum association schemes is the concept of duality. One association scheme is said to be dual to another one if swapping the composition with the Schur product (with a proper renormalization) and $\dag$ with $*$ provides an isomorphism of their coherent algebras. This can be generalized to the quantum case in a straightforward way. However, this definition of duality is rather formal and non-constructive. Actually an association scheme need not have any dual\footnote{The Petersen graph is strongly regular with parameters $(10,3,0,1)$. Computing its eigenvalues and formally computing the parameters of the potential dual, we find out that they are not integers and hence the dual does not exist.} and it can also have more than one dual\footnote{For instance, the Shrikhande graph and the $4\times 4$ rook's graph are both strongly regular with parameters $(16,6,2,2)$. They are both self-dual, but since they have the same parameters, they have isomorphic coherent algebra, so they are also dual to each other.}.

Nevertheless, there is a canonical way of how to construct the dual in the special case of abelian groups and, more generally, translation association schemes provided by the Fourier transform. In case of groups, this is also known as Pontryagin duality and it serves as one of the main motivations for introducing quantum groups. The point is that Pontryagin duality (as well as the duality for translation association schemes) is defined only for abelian groups. This is because the duality essentially exchanges the multiplication and the comultiplication. Hence, the dual of a non-abelian group is a quantum group. So, we may expect that the dual of a non-commutative translation association scheme should be a quantum association scheme. And this is exactly what we show in Section~\ref{secc.trans}.

The duality result for commutative translation association schemes was formulated in \cite{Tam63,Del73}. Some attempt to formulate the duality also for non-commutative association schemes was done in \cite{Ban82}, but it is somewhat weaker as it does not give any association scheme structure to the dual (well, as we said, the correct structure is the structure of a quantum association scheme). We comment on this result more in detail in Remark~\ref{R.Bannai}.

Finally, we conclude the article by introducing quantum Latin squares, which provide an additional way to construct strongly regular quantum graphs.

\section{Finite quantum spaces}

All algebras in this article are unital and defined over $\C$.

\begin{defn}
A \emph{Frobenius $*$-algebra} is a finite-dimensional $*$-algebra $\Alg$ equipped with a positive linear functional $\psi$ such that the bilinear form $\beta\colon(a,b)\mapsto\psi(ab)$ is non-degenerate.
\end{defn}

Any Frobenius $*$-algebra is equipped with an inner product $\langle a,b\rangle=\psi(a^*b)$. Since it acts on itself by left multiplication, it must actually be a C*-algebra. We will denote by $\dag$ the adjoint of any map $T\colon \Alg^{\otimes k}\to \Alg^{\otimes l}$ with respect to this inner product. Note that $\psi=\eta^\dag$ in that case, where $\eta\colon\C\to \Alg$ is the inclusion of the unit $1\mapsto 1_\Alg$. In the following text, we will also denote by $m\colon\Alg\otimes\Alg\to\Alg$ the multiplication map $m(a\otimes b)=ab$.

\begin{defn}
A Frobenius $*$-algebra is called
\begin{itemize}
\item \emph{special} if $mm^\dag=\id$,
\item \emph{symmetric} if $\psi$ is tracial i.e.\ if the bilinear form is symmetric.
\end{itemize}
\end{defn}

\begin{ex}
With any finite set $X$, we associate a commutative (and hence symmetric) special Frobenius $*$-algebra $C(X)$ of functions over $X$. The functional is given by summation $\psi(f)=\sum_{x\in X}f(x)$ and hence the inner product is the standard $l^2$-inner product $\langle f,g\rangle=\sum_{x\in X}\overline{f(x)}g(x)$.
\end{ex}

\begin{ex}
The algebra of all $n\times n$ matrices $M_n(\C)$ is a special symmetric Frobenius $*$-algebra with $x^*$ being the conjugate transpose of $x$ and $\psi(x)=n\Tr x$ for $x\in M_n(\C)$. We will denote the associated finite quantum space (see definition below) by $X=M_n$.
\end{ex}

\begin{defn}
A \emph{finite quantum space} is a (possibly non-commutative) special symmetric%
\footnote{Some authors do not require the symmetric condition. In that case, it may be convenient to use a slightly altered bilinear form (and inner product), which is symmetric even when choosing a non-tracial $\psi$ \cite{Was23}. See also \cite{Mat22,Mat23} for graphical calculus in the non-symmetric case.}\,%
\footnote{It is also worth mentioning that the structure of a special symmetric Frobenius $*$-algebra on a given C*-algebra always exists and is given uniquely \cite{Ban02,GroQHad}. In this sense, we could have just said that a finite quantum space is a finite-dimensional C*-algebra and define the bilinear form and the inner product in this unique way afterwards.}
Frobenius $*$-algebra. We use a specific notation and terminology here. We will usually denote the finite quantum space by a letter $X$ and treat it as some abstract object. Then the actual Frobenius $*$-algebra will be denoted by $C(X)$ as if it were functions over some set $X$. In addition, we denote by $l^2(X)$ the associated Hilbert space ($l^2(X)\simeq C(X)$ as a vector space with inner product $\langle a,b\rangle=\psi(a^*b)$).
\end{defn}

The notion of a finite quantum space (some authors say \emph{finite quantum set} or a \emph{quantum algebra}) started to be heavily used recently in the context of \emph{quantum graphs} \cite{MRV18} (which we will mention here later as well; see also \cite{GQGraph,GroQHad}). Nevertheless, the idea is certainly older (see also \cite{Vic11,Ban02}): The point is that classical and quantum physics (in particular, classical and quantum information theory) essentially differs by non-commutativity, by replacing the classical (often finite) set of points/states/outcomes with a matrix algebra (or, more generally, a C*-algebra).

We will often work with some linear maps $T\colon\Alg^{\otimes k}\to\Alg^{\otimes l}$. Formulas involving such linear maps, their compositions and tensor products are often hard to read when written the classical way. They are much easier to understand if they are written using string diagrams. For that purpose, we will denote
\begin{align*}
\spider{2/1}&:=m\colon l^2(X)\otimes l^2(X)\to l^2(X)&&\qquad\text{the multiplication on $C(X)$}\\
\spider{0/1}&:=\eta\in l^2(X)&&\qquad\text{the unit of $C(X)$},\\
\spider{1/0}&:=\psi=\eta^\dag\colon l^2(X)\to\C&&\qquad\text{the corresponding functional},\\
\spider{2/0}&:=\beta=\psi\circ m=
\Diagram{\Dmor{bcirc}2/1 (1,0.5) \Dmor{bcirc}0/0 (1,1)}
\colon l^2(X)\otimes l^2(X)\to\C
&&\qquad\text{the bilinear form.}
\end{align*}

Now, for instance, the associativity of the multiplication $m(\id\otimes m)=m(m\otimes\id)$ can be written as
$\Diagram{\Dmor{bcirc}2/1 (1,1) \Dmor{bcirc}2/1 (1.5,0) \draw (0.5,0.5) -- (0.5,-0.5);}=
 \Diagram{\Dmor{bcirc}2/1 (2,1) \Dmor{bcirc}2/1 (1.5,0) \draw (2.5,0.5) -- (2.5,-0.5);}$.
The fact that $\eta$ is a unit $m(\id\otimes\eta)=\id=m(\eta\otimes\id)$ is written as 
$\Diagram{\Dmor{bcirc}2/1 (1,.5) \Dmor{bcirc}0/0 (1.5,0)}=\Did=
 \Diagram{\Dmor{bcirc}2/1 (1,.5) \Dmor{bcirc}0/0 (0.5,0)}$.

These maps actually generate a monoidal $\dag$-category that admits a nice diagrammatic calculus, which makes diagrammatic computing with these maps very simple. This category or the diagrammatic calculus is known under different names in different contexts: Some people just talk about Frobenius algebras or Frobenius monoids \cite{Vic11}, some people about \emph{spiders} \cite{CK17}, the category is known also under the name \emph{topological quantum field theory} \cite{Koc03} or \emph{category of partitions} \cite{Jon94,BS09}. We will not explain the rules in full detail, but only mention a couple of relations that we are going to need.

For instance, it always holds that $(\beta\otimes\id)(\id\otimes\beta^\dag)=\id=(\id\otimes\beta)(\beta^\dag\otimes\id)$. This is known as the \emph{snake equation} as it can be conveniently expressed using diagrams if we define $\spider{0/2}:=\beta^\dag=\spider{2/0}^\dag$
\begin{equation}\label{eq.snake}
\Diagram{\Dmor{bcirc}2/0 (1,1) \Dmor{bcirc}0/2 (2,0) \draw (0.5,0.5) -- (0.5,-0.5); \draw (2.5,0.5) -- (2.5,1.5);}=
\Did=
\Diagram{\Dmor{bcirc}2/0 (2,1) \Dmor{bcirc}0/2 (1,0) \draw (2.5,0.5) -- (2.5,-0.5); \draw (0.5,0.5) -- (0.5,1.5);}
\end{equation}
Likewise, if we define $\spider{1/2}:=m^\dag$, we can express the \emph{Frobenius law}:
\begin{equation}\label{eq.Flaw}
\Diagram{\Dmor{bcirc}2/1 (1,1) \Dmor{bcirc}1/2 (2,0) \draw (0.5,0.5) -- (0.5,-0.5); \draw (2.5,0.5) -- (2.5,1.5);}=
\Diagram{\Dmor{bcirc}2/1 (1,0) \Dmor{bcirc}1/2 (1,1)}=
\Diagram{\Dmor{bcirc}2/1 (2,1) \Dmor{bcirc}1/2 (1,0) \draw (2.5,0.5) -- (2.5,-0.5); \draw (0.5,0.5) -- (0.5,1.5);}
\end{equation}
Finally, it always holds that $\eta^\dag\eta=\beta\beta^\dag=\dim\Alg$ or, diagrammatically,
$$
\Diagram{\Dmor{bcirc} 0/1 (1,0) \Dmor{bcirc} 1/0 (1,1)}=
\Diagram{\Dmor{bcirc} 0/2 (1,0) \Dmor{bcirc} 2/0 (1,1)}=
\dim\Alg.$$

For more details and examples on the diagrammatic calculus, see e.g. \cite{Vic11,MRV18,GQGraph,GroQHad}.

The bilinear form $\beta=\spider{2/0}$ on $l^2(X)$ induces a bilinear form on $l^2(X)^{\otimes k}$ by $\beta_k=
\Diagram{\Dmor{bcirc}[-000000-/] (1,1.5) \Dmor{bcirc}[-0000-/] (1,1) \Dmor {bcirc}[.--./] (1,0.5) \draw (-2.5,0) -- (-2.5,1); \draw (-1.5,0) -- (-1.5, 0.5); \draw (3.5,0) -- (3.5,0.5); \draw (4.5,0) -- (4.5,1);}
=
\beta(\id\otimes\beta\otimes\id)\cdots(\id_{k-1}\otimes \beta\otimes\id_{k-1})\colon l^2(X)^{\otimes k}\otimes l^2(X)^{\otimes k}\to\C$.
For any $A\colon l^2(X)^{\otimes k}\to l^2(X)^{\otimes l}$, $k,l\in\N_0$, we define the \emph{categorical transpose} using this bilinear form.
$$A\Ctrans=(\beta_l\otimes\id_k)(\id_l\otimes A\otimes\id_k)(\id_l\otimes\beta_k)=
\Diagram{\DMor{square}[-.-/-.-] (5,0.5) {$\scriptstyle A$}
         \Dmor{bcirc}2/0 (3.5,2) \DMor{bcirc}[-0000-/] (3.5,2.5){}
         \Dmor{bcirc}0/2 (6.5,-1) \DMor{bcirc}[/-0000-] (6.5,-1.5){}
         \draw (1,1.5) -- (1,-1.5); \draw (3,1.5) -- (3,-1.5);
         \draw (7,-0.5) -- (7,2.5); \draw (9,-0.5) -- (9,2.5);}
$$
In particular, for any $f\in l^2(X)$, we define $f\Ctrans=\beta(\id\otimes f)\colon l^2(X)\to\C$. Note also that (thanks to the bilinear form being symmetric) the categorical transpose is involutive, so $(A\Ctrans)\Ctrans=A$ and it is a contravariant functor, so $(AB)\Ctrans=B\Ctrans A\Ctrans$.

For any $A\colon l^2(X)^{\otimes k}\to l^2(X)^{\otimes l}$, we define $A^*=(A^\dag)\Ctrans$. Note first that on $l^2(X)$, this map coincides with the $*$ involution we already have here since
$$\beta((f^\dag)\Ctrans\otimes g)=
\Diagram{\Dmor{bcirc}0/2 (1,0) \Dmor{bcirc}2/0 (2,1)
         \DMor{covec}1/0 (0.5,1.5) {$\scriptstyle f^\dag$}
         \DMor{vec}0/1 (2.5,-.5) {$\scriptstyle g$}}
=
\Diagram{\DMor{covec}1/0 (1,1.5) {$\scriptstyle f^\dag$}
         \DMor{vec}0/1 (1,-.5) {$\scriptstyle g$}}
=\langle f,g\rangle=\beta(f^*\otimes g).
$$
Consequently, we have $A^*(f)=(A(f^*))^*$ (which we could have taken as a definition) and hence the $*$ operation actually does not depend on the particular choice of the bilinear form. Also note that $A=A^*$ if and only if $A$ is $*$-preserving and that $*$ is a (covariant) functor, so $(AB)^*=A^*B^*$.

We define the \emph{Schur product} of maps $A,B\colon l^2(X)\to l^2(X)$ as
$$
A\bullet B=m(A\otimes B)m^\dag=
\Diagram{\Dmor{bcirc}2/1 (1,2) \Dmor{bcirc}1/2 (1,-1)
         \DMor{square}1/1 (0.5,0.5) {$\scriptstyle A$}
         \DMor{square}1/1 (1.5,0.5) {$\scriptstyle B$}
}\colon l^2(X)\to l^2(X).
$$

As one can easily check, if $X$ is just a set, this definition corresponds to the classical entrywise Schur product.

Considering this product, we have
\begin{equation}\label{eq.bulletinv}
(A\bullet B)^*=B^*\bullet A^*,\quad (A\bullet B)\Ctrans=B\Ctrans\bullet A\Ctrans,\quad (A\bullet B)^\dag=A^\dag\bullet B^\dag.
\end{equation}
In fact, it can be shown that the algebra of linear maps $A\colon l^2(X)\to l^2(X)$ with respect to the Schur product and the $*$-involution is a C*-algebra isomorphic to $C(X)\otimes C(X)^{\rm op}$ \cite[Proposition~1.17]{GQGraph}. Note that it has the unit $J=\eta\eta^\dag$. Indeed:
$$A\bullet J=
\Diagram{\Dmor{bcirc}2/1 (1,2) \Dmor{bcirc}1/2 (1,-1)
         \DMor{square}1/1 (0.5,0.5) {$\scriptstyle A$}
         \Dmor{bcirc}1/0 (1.5,0) \Dmor{bcirc}0/1 (1.5,1)
}
=
\Diagram{\DMor{square}1/1 (1,0.5) {$\scriptstyle A$}}
$$

Observe that the special condition for the Frobenius algebra equivalently says that $I\bullet I=I$. Since clearly $I^*=I$, it means that $I$ is an orthogonal projection with respect to the Schur product and $*$-involution.

Finally, for any $A\colon l^2(X)\to l^2(X)$, we define the \emph{categorical trace} as
$$
\CTr A=
\Diagram{\Dmor{bcirc}2/0 (1,2) \Dmor{bcirc}0/2 (1,-1)
         \DMor{square}1/1 (0.5,0.5) {$\scriptstyle A$} \draw (1.5,-0.5) -- (1.5,1.5);
}
$$

Again, thanks to the bilinear form being symmetric, the trace is indeed tracial as we have
$$
\Diagram{\Dmor{bcirc}2/0 (1,3) \Dmor{bcirc}0/2 (1,-2)
         \DMor{square}1/1 (0.5,1.5) {$\scriptstyle A$}
         \DMor{square}1/1 (0.5,-.5) {$\scriptstyle B$}
         \draw (1.5,-1.5) -- (1.5,2.5);}=
\Diagram{\Dmor{bcirc}[-0-/] (1,2) \Dmor{bcirc}[/-0-] (1,-1)
         \DMor{square}1/1 (0,0.5) {$\scriptstyle A$}
         \DMor{square}1/1 (2,0.5) {$\scriptstyle B\Ctrans$}}=
\Diagram{\Dmor{bcirc}2/0 (1,3) \Dmor{bcirc}0/2 (1,-2)
         \DMor{square}1/1 (0.5,1.5) {$\scriptstyle B$}
         \DMor{square}1/1 (0.5,-.5) {$\scriptstyle A$}
         \draw (1.5,-1.5) -- (1.5,2.5);}=
$$
that is,
\begin{equation}\label{eq.TrSchur}
\CTr(AB)=\eta^\dag(A\bullet B\Ctrans)\eta=\CTr(BA).
\end{equation}
Actually, it turns out that, in the symmetric case, the categorical trace coincides with the standard notion of a trace of some linear operator.

\section{Quantum association schemes}
\label{sec.ccqas}

\subsection{Coherent algebras}
\label{secc.coherent}
As we mentioned in the introduction, the most general way how to quantize association schemes, is just to quantize coherent algebras:

\begin{defn}\label{D.Qcoherent}
Let $X$ be a finite quantum space. Denote by $\Lin(l^2(X))$ the vector space of linear operators on $l^2(X)$. A \emph{quantum coherent algebra} over $X$ is a vector subspace $\Alg\subset\Lin(l^2(X))$ that is closed under both the composition as well as the Schur product, both the $\dag$ involution as well as the $*$ involution and contains both the identity $I$ as well as the element $J:=\eta\eta^\dag$. That is, it is a unital involutive algebra with respect to both the products and involutions. We say that it is \emph{commutative} if it it is commutative with respect to the composition and \emph{cocommutative} if it is commutative with respect to the Schur product. We will say that the quantum coherent algebra is \emph{homogeneous} if $I$ is a minimal projection with respect to the Schur product. This means that, if $P\bullet P=P^*=P=P\bullet I=I\bullet P$, then $P=I$ or $P=0$.
\end{defn}


Since we want to study the duality, let us define what we mean by that right away:

\begin{defn}\label{D.dual}
Suppose $\Alg$ and $\Blg$ are quantum coherent algebras over quantum spaces $X$ and $Y$, both with $n:=\dim C(X)=\dim C(Y)$. Then we say that $\Blg$ is dual to $\Alg$ if there is a vector space isomorphism $\Phi\colon\Alg\to\Blg$ such that, for every $x,y\in\Alg$,
\begin{align*}
    \Phi(xy)&=\Phi(x)\bullet\Phi(y)    &n\Phi(x\bullet y)&=\Phi(x)\Phi(y)\\
\Phi(x^\dag)&=\Phi(x)^*                &        \Phi(x^*)&=\Phi(x)^\dag.
\end{align*}
\end{defn}

\begin{rem}
Since $I$ is the identity with respect to the composition and $J=\eta\eta^\dag$ is the identity with respect to the Schur product, it is clear that such a map will will always assign $I\mapsto J$ and $J\mapsto nI$.

Note also that the factor $n$ in the second equation is necessary. Without it, we would have to map $J\mapsto I$ and $I\mapsto J$, which would, however, not be compatible with the fact that
$$I\bullet I=I,\qquad JJ=nJ.$$
\end{rem}

\subsection{Cocommutative association schemes}
\label{secc.CQAS}
First, recall what a classical association scheme exactly is. For an introduction to the classical theory, see also \cite{BI84,BCN89,MT09,God10}.

\begin{defn}
\label{D.Ass}
Consider the set $X=\{1,\dots,n\}$ with $n\in\N$. A $d$-class \emph{association scheme} over $X$ is a set $\Ass=\{A_0,A_1,\dots,A_d\}$ of $n\times n$ matrices $A_i$ such that
\begin{enumerate}
\item All $A_i$ have entries in $\{0,1\}$,
\item $A_0=I$,
\item $\sum_{i=0}^d A_i=J$, where $J$ is the all ones matrix
\item $A_i^{\rm T}\in\Ass$ for every $i$,
\item $A_iA_j\in\spanlin\Ass$ for every $i,j$,
\end{enumerate}
We say that $\Ass$ is
\begin{itemize}
\item \emph{commutative} if $A_iA_j=A_jA_i$ for every $i,j$,
\item \emph{symmetric} if $A_i=A_i^{\rm T}$ for every $i$.
\end{itemize}
\end{defn}

In the following definition, we quantize the concept by replacing the classical adjacency matrices $A_i$ over the classical space $X$ by their quantum counterparts.

\begin{defn}\label{D.QAss}
Let $X$ be a finite quantum space. A $d$-class \emph{cocommutative quantum association scheme} (CQAS) is a set $\Ass=\{A_0,A_1,\dots,A_d\}$ with $A_i\colon l^2(X)\to l^2(X)$ such that
\begin{enumerate}
\item $A_i=A_i^*$, $A_i\bullet A_i=A_i$ for every $i$,
\item $A_0=I$,
\item $\sum_{i=0}^d A_i=J$,
\item $A_i^\dag\in\Ass$ for every $i$,
\item $A_iA_j\in\spanlin\Ass$ for every $i,j$,
\end{enumerate}
We say that $\Ass$ is
\begin{itemize}
\item \emph{commutative} if $A_iA_j=A_jA_i$ for every $i,j$,
\item \emph{symmetric} if $A_i=A_i^\dag$ for every $i$.
\end{itemize}
\end{defn}

As a first observation, being symmetric implies commutativity as for the classical schemes:

\begin{prop}
A symmetric CQAS is always commutative.
\end{prop}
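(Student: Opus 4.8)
The plan is to mimic the classical argument. Classically, if $A_iA_j=\sum_k p_{ij}^k A_k$ and every $A_k$ is symmetric, then applying the (contravariant) transpose gives $A_jA_i=(A_iA_j)^{\rm T}=\sum_k p_{ij}^k A_k^{\rm T}=\sum_k p_{ij}^k A_k=A_iA_j$; so the intersection numbers are ``transpose invariant'' and the scheme commutes. In the quantum setting the transpose is replaced by the categorical transpose $\Ctrans$, which was shown in Section~1 to be a contravariant linear functor with $(AB)\Ctrans=B\Ctrans A\Ctrans$.

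First I would observe that for a symmetric CQAS each generator is fixed by $\Ctrans$. Indeed, axiom~(1) of Def.~\ref{D.QAss} gives $A_i=A_i^*$, the symmetry assumption gives $A_i=A_i^\dag$, and, for any linear map, $A^*=(A^\dag)\Ctrans$; hence $A_i\Ctrans=(A_i^\dag)\Ctrans=A_i^*=A_i$. In particular the two a priori distinct ``transposition-like'' operations $\dag$ and $\Ctrans$ coincide on the $A_i$, and $\spanlin\Ass$ is stable under $\Ctrans$.

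Now fix $i,j$. By axiom~(5) we may write $A_iA_j=\sum_{k=0}^d c_kA_k$ for some scalars $c_k$. Applying the contravariant functor $\Ctrans$ to the left-hand side yields $(A_iA_j)\Ctrans=A_j\Ctrans A_i\Ctrans=A_jA_i$ by the previous step, while applying it (it is $\C$-linear) to the right-hand side yields $\sum_k c_k A_k\Ctrans=\sum_k c_k A_k=A_iA_j$, again by the previous step. Therefore $A_jA_i=A_iA_j$, which is commutativity.

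I do not expect a genuine obstacle: once $A_i\Ctrans=A_i$ is in hand, the conclusion is an immediate consequence of the functoriality of $\Ctrans$ recorded in Section~1. The only point needing a moment's care is precisely that identity $A_i\Ctrans=A_i$, i.e.\ the interplay of $\dag$, $*$ and $\Ctrans$ via $A^*=(A^\dag)\Ctrans$. (One could instead run the same argument with $\dag$ in place of $\Ctrans$, using $(A_iA_j)^\dag=A_j^\dag A_i^\dag=A_jA_i$, but then one would first have to check that the structure constants $c_k$ are real — which follows from $(A_iA_j)^*=A_i^*A_j^*=A_iA_j$ together with the linear independence of $A_0,\dots,A_d$ — so the $\Ctrans$ route is the cleanest.)
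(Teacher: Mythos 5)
Your proof is correct. The paper's own argument is the $\dag$-route you mention in your parenthetical: since all $A_i=A_i^\dag$, it writes $A_iA_j=(A_iA_j)^\dag=A_j^\dag A_i^\dag=A_jA_i$, justifying the first equality by asserting that $\spanlin\Ass$ ``consists of self-adjoint elements only.'' As you correctly observe, that step tacitly uses that the structure constants in $A_iA_j=\sum_k c_kA_k$ are real (a complex span of self-adjoint operators is not pointwise self-adjoint), which one gets from $(A_iA_j)^*=A_i^*A_j^*=A_iA_j$ and linear independence. Your $\Ctrans$-route sidesteps this entirely: $\Ctrans$ is $\C$-linear, so applying it termwise to $\sum_k c_kA_k$ needs no reality of the $c_k$, and the identity $A_i\Ctrans=(A_i^\dag)\Ctrans=A_i^*=A_i$ is exactly the right use of the relation $A^*=(A^\dag)\Ctrans$ from Section~1. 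The two proofs are the same in spirit (apply an involutive antihomomorphism fixing the generators to $A_iA_j$), but yours is the more self-contained of the two; the only cost is invoking the categorical transpose rather than the more familiar adjoint.
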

\begin{proof}
Since $A_i=A_i^\dag=A_i\Ctrans$ for every $i$, we must have $A=A\Ctrans$ for every $A\in\spanlin\Ass$. But then using axiom (5), $A_iA_j=(A_iA_j)\Ctrans=A_j\Ctrans A_i\Ctrans=A_jA_i$.
\end{proof}

The following proposition will explain the adjective \emph{cocommutative}:

\begin{prop}
\label{P.coco}
In any CQAS, we have $A_i\bullet A_j=0=A_j\bullet A_i$ for $i\neq j$.
\end{prop}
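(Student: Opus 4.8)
The plan is to read off the Schur structure as a C*-algebra and then invoke a standard fact about projections. Recall from the discussion preceding \eqref{eq.bulletinv} that $\Lin(l^2(X))$ with the Schur product $\bullet$ and the involution $*$ is a C*-algebra with unit $J=\eta\eta^\dag$ (isomorphic to $C(X)\otimes C(X)^{\rm op}$). The first step is to notice that axiom (1) of Definition~\ref{D.QAss} says exactly that each $A_i$ is a self-adjoint $\bullet$-idempotent, i.e.\ a projection in this C*-algebra, while axiom (3) says that $\sum_{i=0}^{d}A_i=J$, the unit. Thus the statement (understood for $i\ne j$, since $A_i\bullet A_i=A_i\ne 0$ in general) is precisely the assertion that a family of projections in a C*-algebra summing to the identity must be pairwise orthogonal.

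To prove that assertion I would give the short standard argument. Fix $i$. Using $A_i\bullet A_i=A_i$, the fact that $J$ is the $\bullet$-unit, axiom (3), and associativity of $\bullet$, one gets $A_i=A_i\bullet J\bullet A_i=\sum_{j=0}^{d}A_i\bullet A_j\bullet A_i$. The key observation is that each summand is a positive element of the C*-algebra: since $A_j$ is a $\bullet$-projection and, by \eqref{eq.bulletinv} together with axiom (1), $(A_i\bullet A_j)^*=A_j\bullet A_i$, we have $A_i\bullet A_j\bullet A_i=(A_i\bullet A_j)\bullet(A_j\bullet A_j)\bullet A_i=(A_j\bullet A_i)^*\bullet(A_j\bullet A_i)\ge 0$. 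The $j=i$ term equals $A_i$, so cancelling it leaves $\sum_{j\ne i}(A_j\bullet A_i)^*\bullet(A_j\bullet A_i)=0$; a vanishing sum of positive elements of a C*-algebra vanishes term by term, so $(A_j\bullet A_i)^*\bullet(A_j\bullet A_i)=0$, hence $A_j\bullet A_i=0$ for every $j\ne i$. Exchanging the roles of $i$ and $j$ gives $A_i\bullet A_j=0$ as well.

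I do not expect a real obstacle; the one point to be careful about is that ``positive'' here must mean positive in the Schur C*-algebra $(\Lin(l^2(X)),\bullet,*)\cong C(X)\otimes C(X)^{\rm op}$, which is a different order structure from positivity for the composition product on $\Lin(l^2(X))$. If one wished to avoid citing \cite{GQGraph} for the C*-structure, an alternative route is to evaluate the above identity against the positive functional $A\mapsto\eta^\dag A\eta$ and use \eqref{eq.TrSchur} to see each term is $\ge 0$; but quoting the C*-algebra isomorphism and the projection fact is shorter and cleaner.
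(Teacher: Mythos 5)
Your proof is correct and follows essentially the same route as the paper: both identify the $A_i$ as projections in the Schur C*-algebra $(\Lin(l^2(X)),\bullet,{}^*)$ summing to the unit $J$, and conclude pairwise orthogonality. The only difference is that the paper cites the standard fact that projections summing to the identity are mutually orthogonal, whereas you spell out its proof via the positivity decomposition $A_i=\sum_j (A_j\bullet A_i)^*\bullet(A_j\bullet A_i)$; your added caveat that positivity must be taken in the Schur C*-algebra (not for composition) is a correct and worthwhile remark.
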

\begin{proof}
Recall that operators $l^2(X)\to l^2(X)$ together with the Schur product $\bullet$ and the involution $*$ form a C*-algebra. By axiom (1), the $A_i$ are orthogonal projections with respect to the Schur product and the $*$-involution. By axiom (3), $J=\sum_{i=0}^dA_i$, so they sum up to the (Schur) identity. But any set of orthogonal projection in a C*-algebra that sum to the identity must be mutually orthogonal (and, in particular, commute).
\end{proof}

\begin{prop}
For any CQAS $\Ass=\{A_0,A_1,\dots,A_d\}$, the elements are linearly independent. That is, $\dim\spanlin\Ass=d+1$.
\end{prop}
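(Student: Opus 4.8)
The plan is to deduce this directly from Proposition~\ref{P.coco} together with the C*-algebra structure recalled just before it. By axiom~(1) in Definition~\ref{D.QAss} each $A_i$ satisfies $A_i\bullet A_i=A_i$ and $A_i=A_i^*$, so the $A_i$ are projections in the C*-algebra $(\Lin(l^2(X)),\bullet,{}^*)$, and by Proposition~\ref{P.coco} they are pairwise orthogonal, i.e.\ $A_i\bullet A_j=\delta_{ij}A_i$. It then suffices to invoke the standard fact that pairwise orthogonal nonzero projections in a C*-algebra are linearly independent, and I would spell out its proof in this concrete situation. Suppose $\sum_{i=0}^d c_iA_i=0$ with $c_i\in\C$. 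Applying $A_j\bullet(\,\cdot\,)$ to both sides, using bilinearity of the Schur product and $A_j\bullet A_i=\delta_{ij}A_i$, the left-hand side collapses to $c_jA_j$; hence $c_jA_j=0$, and since $A_j\neq 0$ we conclude $c_j=0$. As $j$ was arbitrary, all coefficients vanish, so $A_0,\dots,A_d$ are linearly independent. They span $\spanlin\Ass$ by definition, hence form a basis, and $\dim\spanlin\Ass=d+1$.

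I do not expect a genuine obstacle here: essentially all the content sits in Proposition~\ref{P.coco}, which is already established, and what remains is the one-line orthogonal-idempotents computation above. The single point that deserves a word is the nonvanishing of each $A_i$, which is what allows passing from $c_jA_j=0$ to $c_j=0$. This is the exact counterpart, in the quantum setting, of the classical requirement that an association scheme partition $X\times X$ into \emph{nonempty} relations; without it the list $\{A_0,\dots,A_d\}$ could contain the zero operator and the dimension count would fail. Since $A_0=I\neq 0$ holds automatically, I would simply take the nonvanishing of the remaining $A_i$ to be understood as part of the data of a genuine $d$-class scheme, exactly as in Definition~\ref{D.Ass}.
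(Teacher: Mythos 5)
Your proof is correct and is exactly the route the paper takes: the paper's proof is simply ``Follows directly from Proposition~\ref{P.coco},'' and your orthogonal-idempotents computation is the standard expansion of that one-liner. Your side remark about the nonvanishing of each $A_i$ is a fair observation --- the axioms of Definition~\ref{D.QAss} do not literally exclude the zero operator, and treating nonemptiness of the classes as implicit in the data, as you do, is the right reading.
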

\begin{proof}
Follows directly from Proposition~\ref{P.coco}.
\end{proof}

Finally, we are ready to explain the relationship with coherent algebras.

\begin{thm}
\label{T.AlgAss}
For any CQAS $\Ass$, the space $\Alg:=\spanlin\Ass$ is a homogeneous cocommutative quantum coherent algebra. Conversely, for any homogeneous cocommutative quantum coherent algebra $\Alg$, there is a unique (up to the order of the matrices $A_1,\dots,A_d$) CQAS such that $\Alg=\spanlin\Ass$.
\end{thm}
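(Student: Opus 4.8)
The plan is to prove the two directions separately; the forward one is routine bookkeeping, while the converse one rests on the C*-algebra structure of $(\Lin(l^2(X)),\bullet,*)$ recalled just before \eqref{eq.bulletinv}.

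For the forward direction, let $\Ass=\{A_0,\dots,A_d\}$ be a CQAS and put $\Alg=\spanlin\Ass$. I would check the requirements of Definition~\ref{D.Qcoherent} one at a time. Closure under composition is axiom~(5) of Definition~\ref{D.QAss} extended bilinearly; $I\in\Alg$ and $J\in\Alg$ are axioms~(2) and~(3); closure under $\dag$ comes from axiom~(4), which says $\dag$ permutes the $A_i$, extended conjugate-linearly; closure under $*$ comes from $A_i=A_i^*$ in axiom~(1), again conjugate-linearly. The only step that uses more than one axiom is closure under $\bullet$ together with cocommutativity: by axiom~(1) each $A_i$ satisfies $A_i^*=A_i$ and $A_i\bullet A_i=A_i$, and by Proposition~\ref{P.coco} the $A_i$ are pairwise $\bullet$-orthogonal, so $A_i\bullet A_j=\delta_{ij}A_i$; this keeps $\bullet$ inside $\Alg$ and makes it commutative there.

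For the converse, let $\Alg$ be a cocommutative quantum coherent algebra. Since $(\Lin(l^2(X)),\bullet,*)$ is a C*-algebra and $\Alg$ is a $*$-invariant $\bullet$-subalgebra which, being finite-dimensional, is norm-closed, $\Alg$ is itself a C*-algebra under $(\bullet,*)$, and it is commutative by hypothesis; hence $\Alg\cong\C^{d+1}$ and has a unique complete family $A_0,\dots,A_d$ of minimal $\bullet$-projections. These satisfy $A_i\bullet A_j=\delta_{ij}A_i$, $A_i=A_i^*$ and $\sum_iA_i=J$ (the $\bullet$-unit of $\Alg$ being $J$), which gives axioms~(1) and~(3). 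For axiom~(4), by \eqref{eq.bulletinv} the adjoint $\dag$ is a conjugate-linear algebra automorphism of $(\Alg,\bullet)$; identifying $\Alg$ with $\C^{d+1}$, such a map is a coordinate permutation composed with complex conjugation, so it permutes the minimal idempotents, and $A_i^\dag\in\Ass$. Axiom~(5), $A_iA_j\in\spanlin\Ass=\Alg$, is just closure of $\Alg$ under composition. The final sentence follows at once, since $A_iA_j=A_jA_i$ for all $i,j$ is, by bilinearity, equivalent to commutativity of $\Alg$ under composition.

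The step I expect to be the genuine obstacle is axiom~(2): that $I$ is one of the minimal $\bullet$-projections. By speciality $I\bullet I=mm^\dag=I$, and $I=I^*$, so $I$ is a projection in the commutative C*-algebra $(\Alg,\bullet)$ and therefore $I=\sum_{i\in S}A_i$ for some subset $S$; what has to be proved is $|S|=1$, i.e.\ that $I$ is $\bullet$-minimal in $\Alg$. Classically this is precisely the homogeneity condition that distinguishes association schemes from general coherent configurations---for instance, $M_n(\C)$ over a classical $n$-point set is a cocommutative coherent algebra in the sense of Definition~\ref{D.Qcoherent} but $I=\sum_xE_{xx}$ splits into $n$ minimal Schur idempotents---so it does not follow from the coherent-algebra axioms alone. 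I would therefore regard $\bullet$-minimality of $I$---that is, homogeneity, which is automatic precisely when $C(X)$ is a single matrix block---as part of the intended hypotheses on a quantum coherent algebra, consistent with the $A_i$, among them $I$, being recoverable as the minimal Schur projections; granting it, relabelling the unique element of $S$ as $0$ yields $A_0=I$, the remaining $A_i$ are then determined up to order, and the proof is complete.
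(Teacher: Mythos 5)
Your argument follows essentially the same route as the paper's: the forward direction is the routine verification via Proposition~\ref{P.coco}, and for the converse you pass to the finite-dimensional commutative C*-algebra $(\Alg,\bullet,{}^*)\cong\C^{d+1}$, take the $A_i$ to be its minimal projections, and obtain axiom~(4) from the fact that $\dag$ is a conjugate-linear automorphism of $(\Alg,\bullet)$ by Eq.~\eqref{eq.bulletinv} and hence permutes those projections. The paper phrases this last step via $A_i^\dag=A_i\Ctrans$ and the fact that $\Ctrans$ is a Schur $*$-isomorphism, which, given $A_i=A_i^*$, is the same argument.

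The one point where you depart from the paper is precisely where you are right to worry. The paper's proof asserts that verifying every axiom other than~(4) is ``straightforward,'' but axiom~(2) genuinely fails under the stated hypotheses: as you observe, $\Alg=\Lin(l^2(X))=M_n(\C)$ over a classical $n$-point set with $n\ge2$ satisfies all the conditions of Definition~\ref{D.Qcoherent} and is cocommutative (the entrywise product is commutative), yet its minimal Schur idempotents are the matrix units $E_{xy}$, so $I=\sum_xE_{xx}$ is not among them and no CQAS can span this algebra. Hence the converse of the theorem is false as literally stated; Definition~\ref{D.Qcoherent} (or the theorem's hypothesis) must be supplemented by the homogeneity condition that $I$ be a minimal Schur idempotent of $\Alg$ --- exactly the classical distinction between coherent configurations and association schemes that you cite. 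So the gap you flag is a gap in the paper's proof, not in yours; once homogeneity is assumed, your argument is complete and the uniqueness statement follows as you say from the uniqueness of the family of minimal projections in a commutative C*-algebra.
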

\begin{proof}
The first part is clear -- by definition $\Alg$ is closed under both the products and contains both $I$ and $J$.

For the second part, recall again that $\Alg$ is a finite-dimensional commutative C*-algebra with respect to the Schur product and the $*$-involution. Hence, we get the statement just applying the Gelfand duality. The set $\{A_0,A_1,\dots,A_d\}$ is defined to be the set of all minimal projections in $\Alg$ (with $A_0=I$). Checking that such a CQAS satisfies all the required properties is straightforward except for (4).
%

To show (4), note first that since $A_i=A_i^*$, we have $A_i^\dag=A_i\Ctrans$. Now, observe that by Eq.~\eqref{eq.bulletinv}, the categorical transposition is an algebra $*$-isomorphism with respect to the Schur product and $*$-involution. (Actually, in general, it is an antiisomorphism, but thanks to the commutativity, the order of the operation does not matter.) So, if we transpose all elements of $\Alg$ and then construct all the minimal projections, we get the same set. Therefore, $A_i\Ctrans$ must be one of the $A_0,A_1,\dots,A_d$ for every $i$.
\end{proof}

Note that the homogenity assumption on the coherent algebra is important as otherwise the identity $I$ would not be among the minimal projections $A_0,\dots,A_d$ which would contradict axiom (2). This is true also in the classical case of ordinary association schemes. Leaving out this assumption, one would obtain a \emph{coherent configuration} instead of an association scheme. However, many classical publications focus on commutative coherent algebras and commutative association schemes where this assumption holds automatically.

Let us generalize this to the quantum setting. Before we do that, we observe that it is really an issue conserning the ``classical structure'' of our quantum set. In the ``purely quantum'' setting, where $X=M_n$, i.e.\ our finite quantum space is given by the algebra of all matrices $C(X)=M_n(\C)$, this problem disappears:

\begin{prop}
Any coherent algebra over $M_n$ is homogeneous.
\end{prop}
\begin{proof}
By \cite[Remark~3.6]{GQGraph}, $I$ is a minimal projection with respect to the Schur product in the whole algebra of all operators $M_n(\C)\to M_n(\C)$, so it will be a minimal projection in any quanum coherent algebra over $M_n$.
\end{proof}

\begin{lem}
Let $X$ be a finite quantum set. Then $C(X)=\bigoplus_{j=1}^a M_{n_j}(\C)$. Denote $I=\sum_{j=1}^a I_j$ according to this decomposition. Then any Schur projection $P$ which is smaller than $I$ (that is, $P\bullet P=P^*=P=P\bullet I=I\bullet P$) is of the form $P=\sum_{j\in\pi}I_j$ for some $\pi\subseteq\{1,\dots,a\}$.
\end{lem}
\begin{proof}
First, it is well-known that any finite-dimensional C*-algebra is of the given form. Consequently, any map $P\colon l^2(X)\to l^2(X)$ can be expressed as $P=\sum_{i,j=1}^a P_{ij}$, where $P_{ij}$ maps the $j$-th part to the $i$-th part and acts everywhere else trivially. One can check that $P\bullet I=\sum_{j=1}^a P_{jj}\bullet I_j$, so from $P\bullet I=P$, we have $P_{ij}=0$ for $i\neq j$. Also it is easy to see that all $I_j$ are actually Schur projections; in addition, they are minimal by \cite[Remark 3.6]{GQGraph}. From this, the statement follows.
\end{proof}

\begin{prop}
Any commutative quantum coherent algebra is homogeneous.
\end{prop}
\begin{proof}
Suppose that $P$ is a Schur subprojection of $I$ contained in some quantum coherent algebra $\Alg$. Then, according to the previous lemma, we can split $l^2(X)=V_1\oplus V_2$ such that $P$ has the following block structure. Note also the block structure of $J=\eta\eta^\dag\in\Alg$.
$$P=\begin{pmatrix}I_1&0\\0&0\end{pmatrix},\qquad J=\begin{pmatrix}J_{11}&J_{12}\cr J_{21}&J_{22}\end{pmatrix},$$
where $I_1$ is the identity $V_1\to V_1$ and $J_{ij}=\eta_i\eta_j^\dag$ with $\eta_i$ being the unit vector in the $i$-th part. But this means that unless $V_1$ or $V_2$ are trivial, we have $PJ\neq JP$.
\end{proof}

\begin{cor}
For any commutative CQAS $\Ass$, the space $\Alg:=\spanlin\Ass$ is a commutative cocommutative quantum coherent algebra. Conversely, for any commutative cocommutative quantum coherent algebra $\Alg$, there is a unique (up to the order of the matrices $A_1,\dots,A_d$) commutative CQAS such that $\Alg=\spanlin\Ass$.
\end{cor}

\subsection{Duality in the commutative cocommutative setting}
\label{secc.CCQAS}
In this section, we will consider commutative cocommutative quantum association schemes only and shorten them as CCQAS. We will reformulate the classical theory of parameters for association schemes that are used to formulate the formal duality principle.

\begin{thm}
\label{T.AssDual}
Consider a CCQAS $\Ass$ and its Bose--Mesner algebra $\Alg=\spanlin\Ass$. Then there exists a basis $\{E_0,E_1,\dots,E_d\}$ of $\Alg$ such that
\begin{enumerate}
\item $E_i=E_i^\dag$, $E_iE_j=\delta_{ij}E_i$ for every $i$ (i.e.\ they are mutually orthogonal projections),
\item $E_0=\frac{1}{n}J$, $n=\eta^\dag\eta$,
\item $\sum_{i=0}^d E_i=I$,
\item $E_i^*\in\{E_0,E_1,\dots,E_d\}$ for every $i$,
\item $E_i\bullet E_j\in\spanlin\{E_0,E_1,\dots,E_d\}$.
\end{enumerate}
Such a basis is unique up to the order.
\end{thm}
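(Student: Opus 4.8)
The plan is to exploit the fact that $\Alg$ is simultaneously a commutative C*-algebra with respect to the composition $(\cdot,\dag)$ and a commutative C*-algebra with respect to the Schur product $(\bullet,*)$, and to obtain the $E_i$ as the minimal projections of the \emph{first} of these two structures (dually to how Theorem~\ref{T.AlgAss} obtained the $A_i$ as the minimal projections of the second). First I would invoke that $(\Alg,\cdot,\dag)$ is a finite-dimensional commutative C*-algebra: commutativity is the CCQAS hypothesis, closure under $\dag$ is axiom (4) of Definition~\ref{D.QAss} extended by linearity, closure under composition is axiom (5), and the unit is $I = A_0$. By Gelfand duality such an algebra is $\cong \C^{d+1}$ (the dimension being $d+1$ by the linear-independence proposition), and hence has a unique basis $\{E_0,\dots,E_d\}$ of minimal projections; these automatically satisfy (1) and (3), since the minimal projections of $\C^{d+1}$ are orthogonal and sum to the unit $I$. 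Uniqueness up to order is likewise immediate from the uniqueness of the minimal idempotents.

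Next I would verify (2) and (4), which are the two points that genuinely use the interaction of the two product structures rather than just one of them. For (4): by Eq.~\eqref{eq.bulletinv}, $*$ is a $\dag$-compatible operation and, more to the point, $A \mapsto A^*$ restricted to $\Alg$ is a linear $\dag$-isomorphism of $(\Alg,\cdot,\dag)$ onto itself — it preserves composition since $*$ is a covariant functor, $(AB)^* = A^*B^*$, and it preserves $\dag$ by the identity $A^* = (A^\dag)\Ctrans$ together with $(A^\dag)^* = (A^*)^\dag$ (both following from the remarks after the definition of $*$, as $\dag$ and $\Ctrans$ commute). An automorphism of a commutative C*-algebra permutes its minimal projections, so $E_i^* \in \{E_0,\dots,E_d\}$, giving (4). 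For (2): $J$ is a nonzero idempotent for the composition with $JJ = \eta^\dag\eta\, J = n^2 J$ (using the symmetric-Frobenius relation $\eta^\dag\eta = \dim\Alg$... more precisely $\eta^\dag\eta = n^2$ where $n = \dim C(X)$), so $\frac1n J \cdot \frac1n J = \frac1n J$; one then checks $\frac1n J$ is \emph{minimal}, e.g.\ because $J$ spans a one-dimensional two-sided ideal (for any $B \in \Alg$, $BJ$ and $JB$ are scalar multiples of $J$, as $J$ is a rank-one operator), hence $\frac1n J$ must be one of the $E_i$, which we label $E_0$.

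The only remaining item is (5), that the $E_i$ are closed, up to span, under the Schur product; this is the step I expect to be the main obstacle, since it is the one statement mixing \emph{both} products in an essential way. The clean route is a symmetry/duality argument: the categorical transpose $\Ctrans$ restricted to $\Alg$ is an isomorphism that swaps the roles of composition and Schur product in the sense that, combined with $*$, it sends the $(\bullet,*)$-structure to the $(\cdot,\dag)$-structure and back; concretely, I would show that $\spanlin\{E_i\}$ is by construction \emph{all} of $\Alg$ (it is a basis), so $E_i \bullet E_j \in \Alg = \spanlin\{E_0,\dots,E_d\}$ is in fact automatic once (1)–(4) identify $\{E_i\}$ as a basis. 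Indeed (5) is not an extra closure condition at all but a triviality: any element of $\Alg$ is a linear combination of the $E_i$ because they form a basis, and $\Alg$ is closed under $\bullet$ by Definition~\ref{D.Qcoherent} / Theorem~\ref{T.AlgAss}. So the real content is items (1)–(4), and the proof reduces to: apply Gelfand duality to $(\Alg,\cdot,\dag)$, take minimal projections, identify $\frac1n J$ among them, and observe that the $\dag$-automorphism $*$ permutes them. I would write it in exactly that order, flagging that the apparent subtlety of (5) evaporates once one notes $\{E_i\}$ spans $\Alg$.
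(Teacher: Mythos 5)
Your proof takes exactly the route the paper intends: its own proof of this theorem is the single remark that one repeats the construction of Theorem~\ref{T.AlgAss} with the two product structures interchanged, i.e.\ apply Gelfand duality to the commutative C*-algebra $(\Alg,\cdot,\dag)$, take its minimal projections, identify the rescaled $J$ among them, and obtain (4) from the fact that $*$ is a $\dag$-preserving (in fact conjugate-linear) automorphism of that algebra; your observation that (5) is vacuous because the $E_i$ form a basis of $\Alg$, which is $\bullet$-closed by definition, is likewise the intended reading. Two small corrections are needed. First, the normalization in (2): with the paper's conventions $\eta^\dag\eta=\dim C(X)$ and $JJ=(\eta^\dag\eta)J$ (see the remark after Definition~\ref{D.dual}), so the idempotent is $\frac{1}{n}J$ with $n=\eta^\dag\eta$; your chain ``$JJ=n^2J$, hence $\frac{1}{n}J\cdot\frac{1}{n}J=\frac{1}{n}J$'' is arithmetically false, since $\frac{1}{n^2}\cdot n^2J=J$. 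The square root in the statement is inconsistent with Definition~\ref{D.dual} and with $JJ=nJ$, and should be resolved in favour of $n=\eta^\dag\eta$ rather than by bending the computation to fit it. Second, $BJ\in\C J$ does not follow from $J$ having rank one alone: the range of $BJ=B\eta\eta^\dag$ is spanned by $B\eta$, and to conclude $B\eta\in\spanlin\{\eta\}$ you must use that $\Alg$ is commutative, so that $B$ commutes with the orthogonal projection $\frac{1}{n}J$ onto $\spanlin\{\eta\}$ --- this is precisely where the commutativity hypothesis of a CCQAS enters, and omitting it hides the reason the theorem is stated for CCQAS rather than for general CQAS.
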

\begin{proof}
We get the theorem repeating the construction from the proof of Thm.~\ref{T.AlgAss}.
\end{proof}

For any CCQAS $\Ass$, we will refer to such a basis $\{E_0,E_1,\dots,E_d\}$ as the \emph{dual basis}. Fixing the order of the dual basis, we will denote by $P$ and $Q:=nP^{-1}$ the transition matrices between the two bases. That is,
\begin{align*}
A_i&=\sum_{r=0}^d P_{ri}E_r,\\
E_j&=\frac{1}{n}\sum_{s=0}^d Q_{sj}A_s
\end{align*}

We call the numbers $P_{ij}$ the \emph{eigenvalues} of $\Ass$ and $Q_{ij}$ the \emph{dual eigenvalues}. The reason for this name is that since all the $A_i$ mutually commute, they can be mutually diagonalized and the $E_r$ are projections on the common eigenspaces corresponding to the eigenvalues $P_{ri}$. Indeed:
\begin{equation}\label{eq.eigenvalues}
A_iE_r=\sum_{s=0}^dP_{si}E_sE_r=P_{ri}E_r.
\end{equation}

Denote $n_i:=P_{0i}$. Then $A_iJ=JA_i=n_iJ$. This equivalently means that 
\begin{equation}\label{eq.AssReg}
A_i\eta=n_i\eta\quad\text{and}\quad\eta^\dag A_i=n_i\eta^\dag.
\end{equation}
Hence, all matrices $A_i$ actually define $n_i$-regular quantum graphs (see Def.~\ref{D.regular}). For that reason, the numbers are called the \emph{valencies} of the scheme.

Dually, we denote by $m_l:=\CTr E_l$ the rank of the projection $E_l$, i.e.\ the dimension of the $l$-th eigenspace. These numbers are called the \emph{multiplicities} of the scheme.

Since $A_iA_j\in\spanlin\Ass$, there must also be numbers $p_{ij}^k$ such that
$$A_iA_j=\sum_{k=0}^dp_{ij}^kA_k,$$
which are called the \emph{intersection numbers} of $\Ass$.

Dually, we also have numbers $q_{ij}^k$ such that
$$E_i\bullet E_j=\frac{1}{n}\sum_{k=0}^d q_{ij}^k E_k,$$
which are called the \emph{Krein parameters}.

\begin{lem}
\label{L.EigenInt}
Let $\Ass$ be a CCQAS and denote its parameters as above. Then
$$p_{ij}^k=\frac{1}{nn_k}\sum_l m_lP_{li}P_{lj}\bar P_{lk}.$$
\end{lem}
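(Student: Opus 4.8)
The plan is to expand the product $A_iA_j$ in the eigenbasis and read off the coefficient of $A_k$ by applying the dual eigenvalue relation. Concretely, I would start from the defining relations $A_i=\sum_r P_{ri}E_r$ and use that the $E_r$ are mutually orthogonal idempotents (Theorem~\ref{T.AssDual}(1)), so that
\[
A_iA_j=\Bigl(\sum_r P_{ri}E_r\Bigr)\Bigl(\sum_s P_{sj}E_s\Bigr)=\sum_r P_{ri}P_{rj}E_r.
\]
Then I would re-expand each $E_r$ back in the $A$-basis via $E_r=\frac1n\sum_s Q_{sr}A_s$, giving $A_iA_j=\frac1n\sum_s\bigl(\sum_r P_{ri}P_{rj}Q_{sr}\bigr)A_s$, so that $p_{ij}^k=\frac1n\sum_r P_{ri}P_{rj}Q_{kr}$. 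This already has the right shape; the remaining task is to convert the dual eigenvalue $Q_{kr}$ into the expression $\frac{m_r}{n_k}\bar P_{rk}$ appearing in the statement.

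For that conversion I would establish the orthogonality relation between $P$ and $Q$ coming from the two expansions of the trace, i.e. the analogue of the classical first orthogonality relation. The key inputs are: $\CTr(A_sA_t^\dag)=\CTr(A_s A_t\Ctrans)$, which by Eq.~\eqref{eq.TrSchur} equals $\eta^\dag(A_s\bullet A_t)\eta$; by Proposition~\ref{P.coco} this is $0$ unless $s=t$, and when $s=t$ it is $\eta^\dag A_s\eta=n_s\,\eta^\dag\eta=n\,n_s$ (using $A_s\bullet A_s=A_s$, Eq.~\eqref{eq.AssReg}, and $n=\sqrt{\eta^\dag\eta}$, so $\eta^\dag\eta=n^2$ — wait, one must be careful: $\eta^\dag\eta=\dim\Alg=n^2$? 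No: here $n=\dim C(X)$ in the sense used in the paper, and $\eta^\dag\eta$ equals $\dim C(X)$; I would just keep the symbol $\eta^\dag\eta$ and only substitute at the end, matching the normalization $E_0=\frac1nJ$ which forces $n^2=\eta^\dag\eta\cdot$(something) — I would check the exact constant against the $E_0$ normalization rather than guess). On the other hand, expanding $A_s=\sum_l P_{ls}E_l$ and using $\CTr(E_lE_{l'}{}^\dag)=\CTr(E_l)\delta_{ll'}=m_l\delta_{ll'}$ together with $E_l^\dag=E_l$, one gets $\CTr(A_sA_t^\dag)=\sum_l m_l P_{ls}\bar P_{lt}$. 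Equating the two computations yields $\sum_l m_l P_{ls}\bar P_{lt}=\eta^\dag\eta\, n_s\,\delta_{st}$, which is the orthogonality relation I need.

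From this orthogonality relation I can solve for $Q$: writing $E_j=\frac1n\sum_s Q_{sj}A_s$ and pairing with $A_t^\dag$ under $\CTr$, the left side is $\CTr(E_jA_t^\dag)=\CTr(E_j\sum_l\bar P_{lt}E_l)=m_j\bar P_{jt}$, while the right side is $\frac1n\sum_s Q_{sj}\CTr(A_sA_t^\dag)=\frac1n Q_{tj}\,\eta^\dag\eta\,n_t$. Hence $Q_{tj}=\frac{n\,m_j\bar P_{jt}}{(\eta^\dag\eta)\,n_t}$, and with the correct identification of constants (again pinned down by the $E_0=\frac1nJ$ normalization) this is $Q_{tj}=\frac{m_j}{n_t}\bar P_{jt}$. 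Substituting $Q_{kr}=\frac{m_r}{n_k}\bar P_{rk}$ into $p_{ij}^k=\frac1n\sum_r P_{ri}P_{rj}Q_{kr}$ gives exactly $p_{ij}^k=\frac{1}{nn_k}\sum_l m_l P_{li}P_{lj}\bar P_{lk}$, as claimed.

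The main obstacle I anticipate is bookkeeping the normalization constants correctly: the paper uses $n=\sqrt{\eta^\dag\eta}$ in Theorem~\ref{T.AssDual}, so $\eta^\dag\eta=n^2$, and one must track factors of $n$ carefully through $\CTr(A_s\bullet A_s)\eta=\eta^\dag A_s\eta=n_s\eta^\dag\eta=n_s n^2$, through the definitions $A_i=\sum P_{ri}E_r$ and $E_j=\frac1n\sum Q_{sj}A_s$ with $Q=nP^{-1}$, and through the trace pairings, to make sure the final constant is precisely $\frac{1}{nn_k}$ and not off by a power of $n$. Everything else is a routine diagonalization argument; the only genuinely quantum ingredient is replacing the classical matrix trace by the categorical trace $\CTr$ and invoking Eq.~\eqref{eq.TrSchur} together with Proposition~\ref{P.coco} to get the orthogonality relation, which goes through unchanged.
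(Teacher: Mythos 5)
Your proof is correct, but it takes a somewhat different route from the paper's. The paper Schur-multiplies the defining relation $A_iA_j=\sum_k p_{ij}^kA_k$ by $A_k$, sandwiches with $\eta^\dag$ and $\eta$, and then evaluates the resulting trace $\CTr(A_k^\dag A_iA_j)$ by inserting $I=\sum_l E_l$ --- a two-line computation that never needs an explicit formula for $Q$. You instead diagonalize $A_iA_j=\sum_r P_{ri}P_{rj}E_r$, change basis back to get $p_{ij}^k=\frac1n\sum_r P_{ri}P_{rj}Q_{kr}$, and reduce everything to the identity $Q_{tj}=\frac{m_j}{n_t}\bar P_{jt}$, which you derive from the orthogonality relation $\CTr(A_sA_t^\dag)=\eta^\dag(A_s\bullet A_t)\eta=n\,n_s\,\delta_{st}$. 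The underlying ingredients (Eq.~\eqref{eq.TrSchur}, Prop.~\ref{P.coco}, $A_t^\dag=A_t\Ctrans$, $A_i^\dag E_l=\bar P_{li}E_l$, $\CTr E_l=m_l$) are identical, but your detour yields the quantum analogues of the classical first orthogonality relation and the explicit $P$--$Q$ duality formula as byproducts, which the paper does not state; the price is a longer argument. On the normalization you were worried about: the equation $n=\sqrt{\eta^\dag\eta}$ in Theorem~\ref{T.AssDual} is a typo --- idempotency of $E_0=\frac1nJ$ together with $J^2=(\eta^\dag\eta)J$ forces $\eta^\dag\eta=n$, consistent with $n=\dim C(X)$ in Def.~\ref{D.dual} and with $\eta^\dag\eta=n$ in the definition of strong regularity --- so your $E_0$-based sanity check resolves the ambiguity exactly as you intended, $\eta^\dag A_s\eta=n_sn$, and all your constants come out to give $\frac{1}{nn_k}$ as claimed.
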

\begin{proof}
We start with $A_iA_j=\sum_k p_{ij}^kA_k$. Now, Schur-multiply by $A_k$ from left for some particular $k$. Since $A_i$'s are Schur idempotents, we have $A_k\bullet(A_iA_j)=p_{ij}^k A_k$. Now, sandwich this with $\eta^\dag$ and $\eta$ and use Eq.~\eqref{eq.AssReg} above and Eq.~\eqref{eq.TrSchur} saying that $\eta^\dag(A\bullet B)\eta=\CTr(A\Ctrans B)$ to obtain
$$nn_k p_{ij}^k=p_{ij}^k\eta^\dag A_k\eta=\eta^\dag(A_k\bullet(A_iA_j))\eta=\CTr(A_k\Ctrans A_iA_j)=\CTr(A_k^\dag A_iA_j).$$
Finally, applying $\dag$ to Eq.~\eqref{eq.eigenvalues}, we get $A_i^\dag E_l=\bar P_{li}E_l$. Note also that $I=\sum_l E_l$ and recall the notation $m_l=\CTr E_l$, so
$$nn_k p_{ij}^k=\CTr(A_k^\dag A_iA_j)=\sum_l\CTr(A_k^\dag A_iA_jE_l)=\sum_l m_l\bar P_{lk}P_{li}P_{lj}.\qedhere$$
\end{proof}

\begin{lem}
\label{L.EigenKrein}
Let $\Ass$ be a CCQAS and denote its parameters as above. Then
$$q_{ij}^k=\frac{1}{nm_k}\sum_l n_lQ_{li}Q_{lj}\bar Q_{lk}.$$
\end{lem}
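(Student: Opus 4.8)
The plan is to prove this by dualizing, step for step, the proof of Lemma~\ref{L.EigenInt}: everywhere one exchanges the composition with the Schur product, $\dag$ with $*$, the basis $\{A_i\}$ with the dual basis $\{E_i\}$, and, as a consequence, the valencies $n_i$ with the multiplicities $m_i$ and the matrix $P$ with $Q$. The factor $\frac1n$ in $E_i\bullet E_j=\frac1n\sum_k q_{ij}^kE_k$ will propagate and produce the prefactor $\frac1{nm_k}$ in the end.

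First I would record the \emph{dual eigenvalue relation}, i.e.\ the Schur-product analogue of Eq.~\eqref{eq.eigenvalues}. Since $E_j=\frac1n\sum_s Q_{sj}A_s$ and, by Proposition~\ref{P.coco} together with axiom~(1) of Definition~\ref{D.QAss}, $A_s\bullet A_t=\delta_{st}A_s$, Schur-multiplying gives $E_j\bullet A_s=\frac1n Q_{sj}A_s$; applying the $*$-involution and using $A_s^*=A_s$ also yields $E_j^*\bullet A_s=\frac1n\bar Q_{sj}A_s$. All these products take place inside $\Alg=\spanlin\Ass$, which by Theorem~\ref{T.AlgAss} is a cocommutative coherent algebra, so Schur products may be associated and commuted freely.

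Next, start from $E_i\bullet E_j=\frac1n\sum_k q_{ij}^kE_k$. By Theorem~\ref{T.AssDual} the $E_k$ are mutually orthogonal projections with respect to the composition (self-adjoint idempotents summing to $I$), so composing this identity on the left by a fixed $E_k$ isolates one term: $E_k(E_i\bullet E_j)=\frac1n q_{ij}^kE_k$. Taking the categorical trace and using $m_k=\CTr E_k$ gives $\CTr\bigl(E_k(E_i\bullet E_j)\bigr)=\frac{m_k}{n}q_{ij}^k$. On the other hand, Eq.~\eqref{eq.TrSchur} in the form $\eta^\dag(A\bullet B)\eta=\CTr(A\Ctrans B)$ used in the proof of Lemma~\ref{L.EigenInt}, taken with $A=E_k^*$ and $B=E_i\bullet E_j$ (note $E_k^*=E_k\Ctrans$ since $E_k=E_k^\dag$, so $(E_k^*)\Ctrans=E_k$), rewrites the same trace as $\eta^\dag(E_k^*\bullet E_i\bullet E_j)\eta$.

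Finally I would insert the Schur unit $J=\sum_s A_s$ (axiom~(3)) into $\eta^\dag(E_k^*\bullet E_i\bullet E_j\bullet J)\eta$ and collapse the Schur products term by term using the dual eigenvalue relations, obtaining $E_k^*\bullet E_i\bullet E_j\bullet A_s=\frac1{n^3}Q_{si}Q_{sj}\bar Q_{sk}A_s$, and then $\eta^\dag A_s\eta=nn_s$ by Eq.~\eqref{eq.AssReg}, exactly as in the proof of Lemma~\ref{L.EigenInt}. Equating the two computations of $\CTr(E_k(E_i\bullet E_j))$ gives $\frac{m_k}{n}q_{ij}^k=\frac1{n^2}\sum_s n_sQ_{si}Q_{sj}\bar Q_{sk}$, which is the claimed formula after renaming the summation index. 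I do not expect any genuinely hard step here; the points to be careful about are choosing the form of Eq.~\eqref{eq.TrSchur} so that the complex conjugation lands on $Q_{lk}$ (the "output'' index) rather than on $Q_{li},Q_{lj}$, and keeping track that every Schur-product manipulation is legitimate because it happens inside the cocommutative algebra $\Alg$.
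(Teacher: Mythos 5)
Your proposal is correct and follows essentially the same route as the paper: compose with $E_k$ and take the categorical trace to get $\frac{m_k}{n}q_{ij}^k$, rewrite the trace as $\eta^\dag(E_k^*\bullet E_i\bullet E_j)\eta$ via Eq.~\eqref{eq.TrSchur}, insert $J=\sum_l A_l$, and collapse using $E_j\bullet A_l=\frac1n Q_{lj}A_l$ and $\eta^\dag A_l\eta=nn_l$. The bookkeeping of where the conjugate lands and the prefactors matches the paper's computation exactly.
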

\begin{proof}
The proof is analogous to the preceding lemma, so we will describe it a bit more briefly: Start with the definition of $q_{ij}^k$ and derive
\begin{align*}
\frac{1}{n}q_{ij}^km_k&=\CTr(E_k(E_i\bullet E_j))=\eta^\dag(E_k\Ctrans\bullet E_i\bullet E_j)\eta=\sum_l\eta^\dag(E_k^*\bullet E_i\bullet E_j\bullet A_l)\\&=\frac{1}{n^3}\sum_l \bar Q_{lk}Q_{li}Q_{lj}nn_l.\qedhere
\end{align*}
\end{proof}

\begin{thm}
Suppose $\Ass$ and $\Bss$ are CCQAS and $\Alg$, $\Blg$ are their Bose--Mesner algebras. Then the following are equivalent
\begin{enumerate}
\item $\Blg$ is dual to $\Alg$ (according to Def.~\ref{D.dual}).
\item Eigenvalues of $\Ass$ equal to the dual eigenvalues of $\Bss$.
\item Intersection numbers of $\Ass$ equal to the Krein parameters of $\Bss$.
\end{enumerate}
\end{thm}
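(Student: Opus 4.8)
The plan is to run the cycle $(1)\Rightarrow(2)\Rightarrow(3)\Rightarrow(1)$, reading all the asserted equalities up to reordering of the relevant bases and keeping the index $0$ reserved for the distinguished elements $I$ and $\tfrac1n J$. For $(1)\Rightarrow(2)$: the equations $\Phi(x\bullet y)=\Phi(x)\Phi(y)$ and $\Phi(x^*)=\Phi(x)^\dag$ make $\Phi$ a unital $*$-isomorphism $(\Alg,\bullet,*)\to(\Blg,\circ,\dag)$, while $\Phi(xy)=n\,\Phi(x)\bullet\Phi(y)$ makes $\Psi:=n\Phi$ a unital algebra isomorphism $(\Alg,\circ)\to(\Blg,\bullet)$. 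Hence $\Phi$ carries the minimal Schur-idempotents $A_i$ of $\Alg$ (Theorem~\ref{T.AlgAss}) onto the minimal $\circ$-idempotents of $\Blg$, which form the dual basis of $\Bss$ (Theorem~\ref{T.AssDual}), the distinguished $A_0=I$ going to the distinguished $\tfrac1n J$; and $\Psi$ carries the minimal $\circ$-idempotents $E_r$ of $\Alg$ onto the minimal Schur-idempotents of $\Blg$, the distinguished $E_0=\tfrac1n J$ going to the distinguished $I$. After reordering the two dual bases so that $\Phi(A_i)=F_i$ and $\Psi(E_r)=B_r$, applying $\Psi$ to $A_i=\sum_r P_{ri}E_r$ gives $\sum_r P_{ri}B_r=\Psi(A_i)=nF_i=\sum_s \tilde Q_{si}B_s$, where $\tilde Q$ is the dual eigenvalue matrix of $\Bss$; comparing coefficients gives $P=\tilde Q$, which is~$(2)$.

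For $(2)\Rightarrow(3)$: Lemma~\ref{L.EigenInt} for $\Ass$ and Lemma~\ref{L.EigenKrein} for $\Bss$ express the intersection numbers of $\Ass$ and the Krein numbers of $\Bss$ as the same weighted sum over the eigenvalues $P_{li}$ (by the hypothesis $P=\tilde Q$), the only difference being the prefactor $m_l/n_k$ versus $\tilde n_l/\tilde m_k$ (tildes denoting the data of $\Bss$). So it remains to establish the duality of valencies and multiplicities, $n_k=\tilde m_k$ and $m_l=\tilde n_l$. I would get this from the identity $\CTr A_s=n\,\delta_{s0}$: indeed $\CTr I=\eta^\dag\eta=n$, and for $s\neq0$ Proposition~\ref{P.coco} gives $A_s\bullet I=A_s\bullet A_0=0$, hence $\CTr A_s=\eta^\dag(A_s\bullet I)\eta=0$ by~\eqref{eq.TrSchur}. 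This yields $m_l=\CTr E_l=Q_{0l}$, and together with $Q=nP^{-1}$, the analogue $\tilde P=n\tilde Q^{-1}$, and $P=\tilde Q$ it gives $m_l=Q_{0l}=\tilde P_{0l}=\tilde n_l$ and $\tilde m_k=\tilde Q_{0k}=P_{0k}=n_k$; substituting back into the two lemmas proves~$(3)$.

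For $(3)\Rightarrow(1)$: I would test the linear map $\Phi(A_i):=F_i$ against the four axioms of Definition~\ref{D.dual}. Comparing $A_iA_j=\sum_k p_{ij}^k A_k$ with $F_i\bullet F_j=\tfrac1n\sum_k \tilde q_{ij}^k F_k$, the axiom $\Phi(xy)=n\,\Phi(x)\bullet\Phi(y)$ is literally the hypothesis $p_{ij}^k=\tilde q_{ij}^k$. The axioms $\Phi(x\bullet y)=\Phi(x)\Phi(y)$ and $\Phi(x^*)=\Phi(x)^\dag$ are automatic, from $A_i\bullet A_j=\delta_{ij}A_i$ (Proposition~\ref{P.coco}), $F_iF_j=\delta_{ij}F_i$ (Theorem~\ref{T.AssDual}), $A_i=A_i^*$, and $F_i=F_i^\dag$. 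The remaining axiom $\Phi(x^\dag)=\Phi(x)^*$ carries the real content: writing $A_i^\dag=A_i\Ctrans=A_{\sigma(i)}$ (using $A_i=A_i^*$) and $F_i^*=F_{\rho(i)}$ for permutations $\sigma$, $\rho$, it amounts to $\sigma=\rho$. I would recover both permutations from the parameters, using~\eqref{eq.TrSchur}, Proposition~\ref{P.coco}, and the regularity~\eqref{eq.AssReg}: the $A_0$-component of $A_iA_j$, picked out by $\tfrac1n\CTr$, comes out as $p_{ij}^0=\delta_{i,\sigma(j)}n_i$, and dually the $F_0$-component of $F_i\bullet F_j$, picked out by $x\mapsto\tfrac1n\eta^\dag x\eta$, comes out as $\tilde q_{ij}^0=\delta_{i,\rho(j)}\tilde m_i$. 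Thus $\sigma(i)$ is the unique index $j$ with $p_{ij}^0\neq0$ and $\rho(i)$ the unique $j$ with $\tilde q_{ij}^0\neq0$, so the hypothesis $p_{ij}^k=\tilde q_{ij}^k$ forces $\sigma=\rho$, and the last axiom follows.

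The two places where I expect the real work to lie are exactly these: the $\dag$/$*$-compatibility in $(3)\Rightarrow(1)$, which looks like a formality but genuinely requires the transpose structure to be reconstructed from the combinatorial parameters, and the valency--multiplicity duality in $(2)\Rightarrow(3)$, whose crucial input is $\CTr A_s=n\,\delta_{s0}$. Everything else is routine bookkeeping with the transition matrices $P$, $Q$ and the two lemmas already at hand.
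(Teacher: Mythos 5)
Your proof is correct and follows the same global strategy as the paper: the same cycle of implications, the same transport of minimal idempotents under $\Phi$ and $n\Phi$ for $(1)\Rightarrow(2)$, Lemmata~\ref{L.EigenInt} and \ref{L.EigenKrein} for $(2)\Rightarrow(3)$, and the structure-constant computation for $(3)\Rightarrow(1)$. The two places you flag as carrying the real work are exactly where the paper is terse, and there your route differs in substance. For $(2)\Rightarrow(3)$ the paper only asserts that the parameters are determined by the (dual) eigenvalues; your identity $\CTr A_s=n\,\delta_{s0}$, giving $m_l=Q_{0l}$ and hence the valency--multiplicity exchange $n_k=\tilde m_k$, $m_l=\tilde n_l$, is precisely the justification that is left implicit, and it is correct. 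For the axiom $\Phi(x^\dag)=\Phi(x)^*$ in $(3)\Rightarrow(1)$ the paper takes a shortcut you do not: it first establishes $\Phi(E_j)=\frac1n B_j$ and then expands $x$ in the \emph{dual} basis, where $E_j^\dag=E_j$ and $B_j^*=B_j$, so no matching of transpose permutations is ever needed explicitly; your alternative---reading $\sigma$ and $\rho$ off the index-$0$ structure constants $p_{ij}^0=\delta_{i,\sigma(j)}n_i$ and $\tilde q_{ij}^0=\delta_{i,\rho(j)}\tilde m_i$ and using $n_i,\tilde m_i>0$---is more laborious but equally valid, and it has the merit of making visible exactly how hypothesis $(3)$ enters this axiom (the paper's shortcut instead hides that dependence inside the identity $\Phi(E_j)=\frac1nB_j$). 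One cosmetic remark: since $\dag$ is involutive, your $\sigma$ satisfies $\sigma=\sigma^{-1}$, which reconciles your phrasing that $\sigma(i)$ is the unique $j$ with $p_{ij}^0\neq0$ with the literal reading $j=\sigma^{-1}(i)$; it is worth saying so. No gaps otherwise.
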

Let us briefly explain what exactly we mean by conditions (2) and (3): We consider them as properties of the CCQAS. A CCQAS is defined as a \emph{set} of some operators, so their order is not fixed. Also the dual basis is unique only up to the order. So, by (2) and (3) we mean that there exists an ordering of $\Ass$ and $\Bss$ and there exists an ordering of their dual bases such that (2) the eigenvalue matrix of $\Ass$ equals to the dual eigenvalue matrix of $\Bss$, resp.\ (3) the ``array'' of intersection numbers of $\Ass$ equal to the Krein parameters of $\Bss$.
\begin{proof}
The implication $(3)\Rightarrow(1)$ basically follows from definition: The coherent algebras $\Alg$ and $\Blg$, as abstract algebras, are completely determined by their structure constants, which are the intersection numbers and the Krein parameters. Swapping these two, we swap the two products. Indeed, denote $\Ass=\{A_i\}_{i=0}^d$, $\Bss=\{B_i\}_{i=0}^d$ and denote by $\{E_j\}_{j=0}^d$, $\{F_j\}_{j=0}^d$ their dual bases. Define $\Phi\colon\Alg\to\Blg$ to be the linear extension of $A_i\mapsto nF_i$. Then also
$$\Phi(E_j)=\frac{1}{n}\sum_{s=0}^dQ_{sj}\Phi(A_s)=\sum_{s=0}^dQ_{sj}F_s=B_j$$
Now, taking any two elements of $\Alg$: $A=\sum_i\alpha_iA_i=\sum_j\tilde\alpha_jE_j$, $B=\sum_i\beta_iA_i=\sum_j\tilde\beta_jE_j$, we have
\begin{align*}
\Phi(AB)&=\sum_{ij}\alpha_i\beta_j\Phi(A_iA_j)=\sum_{ijk}\alpha_i\beta_jp_{ij}^k\Phi(A_k)=n\sum_{ijk}\alpha_i\beta_jp_{ij}^kF_k\\&=n^2\sum_{i,j}\alpha_i\beta_j\,F_i\bullet F_j=\sum_i\alpha_i\Phi(A_i)\sum_j\beta_j\Phi(A_j)=\Phi(A)\bullet\Phi(B),\\
n\Phi(A\bullet B)&=n\sum_{ij}\tilde\alpha_i\tilde\beta_j\Phi(E_i\bullet E_j)=\sum_{ij,k}\tilde\alpha_i\tilde\beta_jq_{ij}^k\Phi(E_k)=\sum_{i,j,k}\tilde\alpha_i\tilde\beta_jq_{ij}^kB_k\\&=\sum_{i,j}\tilde\alpha_i\tilde\beta_jB_iB_j=\sum_i\tilde\alpha_i\Phi(E_i)\sum_j\tilde\alpha_j\Phi(E_j)=\Phi(A)\Phi(B)
\end{align*}
The fact that $\Phi$ preserves the involutions in the prescribed way follows from the fact that the bases $\{A_i\}$ and $\{B_i\}$ are self-conjugated and $\{E_j\}$, $\{F_j\}$ are self-adjoint. That is, $\Phi(A^*)=\sum_i\bar\alpha_i\Phi(A_i)=\sum_i\bar\alpha_iE_i=\Phi(A)^\dag$ and analogous for $\Phi(A^\dag)=\Phi(A)^*$.

The implication $(1)\Rightarrow(2)$ follows from the uniqueness in Theorems~\ref{T.AlgAss}, \ref{T.AssDual}. Denote $\Ass=\{A_i\}_{i=0}^d$ and by $\{E_j\}_{j=0}^d$ its dual basis. Now, it is enough to check that $B_i:=\Phi(E_i)$, $F_j=\Phi(A_j)$ satisfy the conditions in Definition~\ref{D.QAss} and Theorem~\ref{T.AssDual}. But this is obvious.

Finally, implication $(2)\Rightarrow(3)$ follows from Lemmata~\ref{L.EigenInt}, \ref{L.EigenKrein} as intersection numbers and Krein parameters are determined by the eigenvalues and dual eigenvalues. So, if the latter coincide, the former must as well.
\end{proof}

If the above properties are satisfied, we say that $\Bss$ is \emph{dual} to $\Ass$.

\section{Distance regular quantum graphs}
\label{sec.drg}

\subsection{Quantum graphs}

\begin{defn}
Let $X$ be a quantum space. A \emph{(directed) quantum graph} on $X$ is determined by an \emph{adjacency matrix} $A\colon l^2(X)\to l^2(X)$ such that $A\bullet A=A=A^*$. We say that the graph is \emph{undirected} if $A=A^\dag$. We say that it has no loops if $A\bullet I=0=I\bullet A$. An undirected quantum graph with no loops will be called \emph{simple}.
\end{defn}

Observe that a CQAS $\Ass=\{A_0,\dots,A_d\}$ consists of $d$ directed quantum graphs, which together partition the full graph $J-I=\sum_{i=1}^d A_i$. The matrix $A_0=I$ describes a quantum graph that only consists of loops at every vertex and nothing else. The CQAS is symmetric if and only if all the graphs are undirected.

\begin{defn}
For any quantum graph without loops described by some adjacency matrix $A$, we define its \emph{complement} to be the quantum graph defined by $J-I-A$.
\end{defn}

\begin{defn}[{\cite[Def.~2.24]{Mat22}}]
\label{D.regular}
Let $X$ be a quantum space. A quantum graph with adjacency matrix $A\colon l^2(X)\to l^2(X)$ is said to be \emph{regular} of degree $k$ if $A\eta=k\eta$ and $\eta^\dag A=k\eta^\dag$.
\end{defn}

Note that in case of undirected quantum graphs, the two conditions in the definition of a regular graph are actually equivalent.

Finally, Matsuda defines in \cite{Mat23} the notion of connectedness. Since the definition is slightly complicated, we bring the following equivalent characterization from \cite[Theorem~3.7]{Mat23} that works for regular quantum graphs only.

\begin{defn}[\cite{Mat23}]
A $k$-regular quantum graph is said to be \emph{connected} if $k$ is a simple eigenvalue of its adjacency matrix.
\end{defn}

Finally, we would like to remind the definition of a quantum isomorphism for quantum graphs. This has several equivalent characterizations summarized in \cite{BCE+20}. We took the one from \cite[Theorem~4.7]{BCE+20}.

\begin{defn}
\label{D.qiso}
Let $X_1,X_2$ be quantum spaces and denote by $m_1,\eta_1,m_2,\eta_2$ the defining structure maps (multiplication and unit). Suppose $A_i\colon l^2(X_i)\to l^2(X_i)$, $i=1,2$ are adjacency matrices of quantum graphs. We say that the graphs are \emph{quantum isomorphic} if there is an isomorphism of monoidal $\dag$-categories $\langle m_1,\eta_1,A_1\rangle\to\langle m_2,\eta_2,A_2\rangle$ mapping $m_1\mapsto m_2$, $\eta_1\mapsto\eta_2$, $A_1\mapsto A_2$.
\end{defn}

Without discussing any further what exactly the definition means, let us mention the important consequence for this work: The definition says that a quantum isomorphism preserves every property of a quantum graph that can be expressed via the adjacency matrix $A$, the multiplication $m$ and the unit $\eta$. In particular, it preserves the coherent algebra generated by the adjacency matrix.

\subsection{Distance regular quantum graphs}

\begin{defn}
Let $X$ be a quantum space and $A\colon l^2(X)\to l^2(X)$ an adjacency matrix of a quantum graph. We say that the quantum graph is \emph{distance regular} with diameter $d$ if $\Alg=\spanlin\{I,A,A^2,\dots,A^d\}$ is a $(d+1)$-dimensional coherent algebra.
\end{defn}

\begin{prop}
Any distance regular quantum graph is regular.
\end{prop}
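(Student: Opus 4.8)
The plan is to exploit that the algebra $\Alg=\spanlin\{I,A,A^2,\dots,A^d\}$ of a distance regular quantum graph is \emph{commutative} with respect to the composition --- it is spanned by powers of the single operator $A$ --- and that, being a coherent algebra, it contains $J=\eta\eta^\dag$. Together these force $A$ to commute with $J$, i.e.\ $AJ=JA$. This is the only structural input; the rest is a short computation obtained by evaluating this identity on the unit vector $\eta$.

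First I would note that $\eta^\dag\eta=\langle 1_{C(X)},1_{C(X)}\rangle>0$, so it is a nonzero real scalar. Evaluating $AJ=JA$ on $\eta$ gives, on the one hand, $AJ\eta=(\eta^\dag\eta)\,A\eta$ and, on the other hand, $JA\eta=(\eta^\dag A\eta)\,\eta$; hence $A\eta=k\eta$ with $k:=(\eta^\dag A\eta)/(\eta^\dag\eta)$. Taking $\dag$ of $AJ=JA$ and using $J^\dag=J$ gives $A^\dag J=JA^\dag$, so the same computation applied to $A^\dag$ yields $A^\dag\eta=\bar k\,\eta$, using that $\eta^\dag A^\dag\eta$ is the complex conjugate of the scalar $\eta^\dag A\eta$ while $\eta^\dag\eta$ is real. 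Taking $\dag$ once more turns $A^\dag\eta=\bar k\eta$ into $\eta^\dag A=k\eta^\dag$. Together with $A\eta=k\eta$ this is exactly the assertion that the quantum graph is regular of degree $k$ in the sense of Definition~\ref{D.regular}.

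I do not expect any real obstacle here. The only point worth stressing is that \emph{both} hypotheses are genuinely needed: a general element of a coherent algebra need not commute with $J$, and a general directed quantum graph is "out-regular" (satisfies $A\eta\in\C\eta$) without being "in-regular", so the coincidence of the two degrees has to be extracted --- which it is, precisely because $J$ is self-adjoint and $\eta^\dag\eta$ is a nonzero real. As a bonus, $k$ is in fact real: since $A=A^*$ means $A^\dag=A\Ctrans$, and $\eta\Ctrans=\eta^\dag$, applying $\Ctrans$ to $A\eta=k\eta$ gives $\eta^\dag A^\dag=k\eta^\dag$, hence $A\eta=\bar k\eta$ and $k=\bar k$.
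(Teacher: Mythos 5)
Your proof is correct and follows essentially the same route as the paper: the coherent algebra $\spanlin\{I,A,\dots,A^d\}$ is commutative and contains $J=\eta\eta^\dag$, so $AJ=JA$ forces $\eta$ to be an eigenvector of $A$ and of $A^\dag$. You merely spell out explicitly (and correctly) the evaluation on $\eta$ and the extraction of $\eta^\dag A=k\eta^\dag$, which the paper leaves implicit.
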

\begin{proof}
We assume that the algebra $\Alg$ is a coherent algebra, so it must contain the element $J=\eta\eta^\dag$, which is a multiple of a rank one projection (on the subspace $\spanlin\{\eta\}$). The algebra is obviously commutative, so $AJ=JA$. Together, it follows that $J$ is a projection on some eigenspace of $A$, namely that $\eta$ is an eigenvector of $A$. That is, $A\eta=k\eta$ and $\eta^\dag A=k\eta^\dag$ for some $k\in\C$.
\end{proof}

\begin{prop}\label{P.drgConn}
Any distance regular quantum graph is connected.
\end{prop}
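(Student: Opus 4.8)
The plan is to show that the degree $k$ of a distance regular quantum graph is a simple eigenvalue of its adjacency matrix $A$, which by the stated characterization of connectedness is exactly what we need. First I would recall what we already have from the preceding proposition: $\Alg=\spanlin\{I,A,\dots,A^d\}$ is a $(d+1)$-dimensional commutative coherent algebra containing $J=\eta\eta^\dag$, and $\eta$ is an eigenvector of $A$ with eigenvalue $k$, so $J$ is (a scalar multiple of) the orthogonal projection onto the $k$-eigenspace of $A$. Since $\Alg$ is coherent it is in particular closed under the Schur product and contains $I$ and $J$; being commutative with respect to composition, it is simultaneously diagonalizable, so it decomposes as a direct sum of its minimal (composition) projections, call them $E_0,\dots,E_d$, with $\sum E_r=I$. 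The $E_r$ are the spectral projections of $A$ onto its distinct eigenspaces (there are exactly $d+1$ of them because $I,A,\dots,A^d$ are linearly independent, forcing $A$ to have $d+1$ distinct eigenvalues), and one of them, say $E_0$, satisfies $E_0=\frac1n J$ with $n=\sqrt{\eta^\dag\eta}$, since $J$ is itself a minimal projection of $\Alg$ (it has rank one as an operator, hence cannot split further inside $\Alg$).

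The key step is then to identify the rank of $E_0$. The quantity $\CTr E_0$ (the categorical trace, which by the remark at the end of Section~1 agrees with the operator trace) is the dimension of the $k$-eigenspace of $A$. But $E_0=\frac1n\eta\eta^\dag$, and using $\eta^\dag\eta=n^2$ together with Eq.~\eqref{eq.TrSchur} or a direct diagrammatic computation, $\CTr E_0=\frac1n\CTr(\eta\eta^\dag)=\frac1n\,\eta^\dag\eta=\frac1n\cdot n^2$... which is not what we want, so the cleaner route is: $J$ acts as $J\xi=\langle\eta,\xi\rangle\,\eta$, hence $J$ is rank one as an operator on $l^2(X)$, therefore $E_0=\frac1nJ$ is a rank-one projection. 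Consequently the $k$-eigenspace of $A$ is one-dimensional, i.e.\ $k$ is a simple eigenvalue, and the graph is connected by definition.

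The main obstacle I anticipate is purely bookkeeping: making sure that $J$ really is a minimal projection of $\Alg$ with respect to composition (equivalently that no proper subprojection of the rank-one operator $J/n$ lies in $\Alg$), and that the spectral projection of $A$ onto the $k$-eigenspace coincides with $J/n$ rather than being merely dominated by it. This is handled by the argument already given in the proof of the previous proposition — $J$ commutes with $A$, $J$ is a rank-one idempotent up to scalar, so $J/n$ is the projection onto $\spanlin\{\eta\}\subseteq\ker(A-k)$ — combined with the observation that $A$ restricted to $\Alg$ has exactly $d+1$ distinct eigenvalues, so the $d+1$ minimal projections $E_r$ are precisely the full spectral projections of $A$ and each eigenvalue gets exactly one. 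One should also double-check the edge case $d=0$, where the graph is trivial (only loops) and the statement is vacuous or immediate. No delicate C*-algebraic input beyond finite-dimensional spectral theory is needed; the whole argument is a two-line consequence of the structure already exposed for distance regular quantum graphs.
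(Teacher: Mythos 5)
Your argument is correct and is essentially the paper's own proof: the commutative coherent algebra $\spanlin\{I,A,\dots,A^d\}$ is spanned by the spectral projections of $A$, it must contain $J=\eta\eta^\dag$, which is a rank-one operator corresponding to the eigenvalue $k$, so $k$ is a simple eigenvalue and the graph is connected by definition. The only wrinkle is the normalization (the idempotent is $J/(\eta^\dag\eta)$, so the $\sqrt{\phantom{n}}$ in Theorem~\ref{T.AssDual} should not be taken at face value here), but since you ultimately argue via the rank of $J$ as an operator, this does not affect the conclusion.
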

\begin{proof}
The commutative algebra $\Alg=\spanlin\{I,A,\dots,A^d\}$ is spanned by the projection on the eigenspaces of $A$. By assumption of being a coherent algebra, it must contain the projection $J$, which is of rank one and corresponds to the eigenvalue $k$, where $k$ is the degree of $A$.
\end{proof}

Below, we will focus on the cocommutative setting. We leave the question open whether there exists a distance regular quantum graph with coherent algebra that is not cocommutative.

\subsection{Cocommutative distance regular quantum graphs}

\begin{prop}
Let $X$ be a quantum space and $A\colon l^2(X)\to l^2(X)$ an adjacency matrix of a quantum graph with no loops which is distance regular with diameter~$d$. Assume, in addition, that $A^k\bullet A^l=A^l\bullet A^k$ for every $k,l\in\{0,\dots,d\}$. Define the sequence $(A_i)$ by
\begin{equation}\label{eq.DT}
A_0=I,\quad A_1=A,\quad A_i=\frac{1}{\beta_i}B_i,\quad B_i=A^i-\sum_{j=0}^{i-1}A^i\bullet A_j,\quad\beta_i=\frac{\eta^\dag B_i\bullet B_i\eta}{\eta^\dag(B_i)\eta}.
\end{equation}
Then $\Ass=(A_0,\dots,A_d)$ is a CCQAS.
\end{prop}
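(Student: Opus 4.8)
The plan is to identify the recursively constructed tuple $(A_0,\dots,A_d)$ with the tuple of minimal Schur-idempotents of the algebra $\Alg:=\spanlin\{I,A,\dots,A^d\}$ and then invoke Theorem~\ref{T.AlgAss}. First, $\Alg$ is a coherent algebra by the distance-regularity hypothesis, so it is $\dag$- and $*$-closed and contains $I$ and $J$; it is commutative with respect to composition since powers of $A$ commute; and it is cocommutative since the hypothesis $A^k\bullet A^l=A^l\bullet A^k$ extends bilinearly to all of $\Alg$. Hence $(\Alg,\bullet,*)$ is a commutative C*-algebra of dimension $d+1$ (the powers $I,A,\dots,A^d$ being linearly independent, as $\dim\Alg=d+1$), and by Theorem~\ref{T.AlgAss} its minimal Schur-projections form the unique CQAS $\Bss=\{P_0,\dots,P_d\}$ with Bose--Mesner algebra $\Alg$; since $\Alg$ is commutative, this $\Bss$ is commutative, hence a CCQAS by Proposition~\ref{P.coco}. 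So it suffices to prove $\Ass=\Bss$.

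Write $V_m:=\spanlin\{I,A,\dots,A^m\}$, a strictly increasing chain of subspaces with $\dim V_m=m+1$. The key structural fact, which I would isolate as a lemma, is
\begin{equation*}
A^k\bullet A^l\in V_{\min(k,l)}\qquad\text{for all }k,l\ge 0,
\end{equation*}
equivalently that every $V_m$ is closed under the Schur product and $A^k\bullet V_m\subseteq V_m$ whenever $k\ge m$. Granting this, I would prove by induction on $i$ that $A_0,\dots,A_i$ are well-defined, nonzero, satisfy $A_j=A_j^*$, are pairwise Schur-orthogonal Schur-idempotents, and have $\spanlin\{A_0,\dots,A_i\}=V_i$. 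In the inductive step everything is routine. One has $B_i=B_i^*$, by $A=A^*$, the commutativity of $\bullet$, \eqref{eq.bulletinv}, and $A_j=A_j^*$ for $j<i$; one has $B_i\bullet A_j=0$ for $j<i$, using $A_l\bullet A_j=\delta_{lj}A_j$; by the displayed fact $B_i=A^i-\sum_{j<i}A^i\bullet A_j$ lies in $V_i$, and since $A^i\notin V_{i-1}$ it lies outside $V_{i-1}$, so $B_i\ne 0$ and $\spanlin\{A_0,\dots,A_{i-1},B_i\}=V_i$; then in the $(i{+}1)$-dimensional commutative C*-algebra $(V_i,\bullet,*)$ the elements $A_0,\dots,A_{i-1}$ are $i$ pairwise-orthogonal nonzero projections and $B_i\ne 0$ is Schur-orthogonal to all of them, which forces each $A_j$ ($j<i$) to be a single minimal projection of $V_i$ and $B_i=cP$ to be a scalar multiple of the remaining minimal projection $P$; finally \eqref{eq.TrSchur} gives $\eta^\dag P\eta=\CTr(PP^\dag)>0$, whence $\eta^\dag(B_i\bullet B_i)\eta=c^2\,\eta^\dag P\eta$ and $\eta^\dag B_i\eta=c\,\eta^\dag P\eta$, so $\beta_i=c$ and $A_i=B_i/\beta_i=P$, Schur-orthogonal to $A_0,\dots,A_{i-1}$. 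At $i=d$ we obtain $d+1$ pairwise Schur-orthogonal nonzero Schur-projections spanning the $(d{+}1)$-dimensional commutative C*-algebra $(\Alg,\bullet,*)$; these are necessarily its minimal projections, so $\Ass=\Bss$, which is a CCQAS.

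The main obstacle is the displayed fact $A^k\bullet A^l\in V_{\min(k,l)}$. It is the quantum counterpart of the classical statement that in a distance-regular graph $A^i$ is a combination of the distance matrices $A_0^{\mathrm{dist}},\dots,A_i^{\mathrm{dist}}$ with nonzero coefficient on $A_i^{\mathrm{dist}}$, which rests on the facts that a walk of length $i$ cannot join two vertices at distance larger than $i$ and that its multiplicity depends only on the distance. For $m\le 1$ it is immediate: $I\bullet I=I$, $I\bullet A=0$ by the no-loops hypothesis, and $A\bullet A=A$. In general I would run an auxiliary induction on $m$, using the Schur-positivity of the composition powers $A^m$ inside $(\Alg,\bullet,*)$ — so that each $A^m$ is a non-negative combination of the minimal Schur-projections — together with the simplicity of the Perron eigenvalue of $A$ (connectedness, Proposition~\ref{P.drgConn}) and the dimension count $\dim V_m=m+1$, to force that exactly one new minimal projection is picked up at each level. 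This is where all three standing hypotheses — no loops, pairwise commutativity of the $A^k\bullet A^l$, and $\Alg$ being a coherent algebra of dimension exactly $d+1$ — are actually used, and it is the delicate point of the argument.
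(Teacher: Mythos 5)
Your overall architecture is sound and close in spirit to the paper's: both run a Gram--Schmidt-type induction inside the commutative C*-algebra $(\Alg,\bullet,*)$ and identify the $A_i$ with its minimal Schur projections. The genuine gap is that your entire induction hangs on the displayed lemma $A^k\bullet A^l\in V_{\min(k,l)}$, which you do not prove. The strategy you sketch for it --- Schur-positivity of the composition powers $A^m$, simplicity of the Perron eigenvalue, and a dimension count --- is not yet an argument: Schur-positivity of $A^m$ (that $A^m$ is a positive element of $(\Alg,\bullet,*)$, i.e.\ a non-negative combination of the minimal Schur projections) is itself a nontrivial quantum claim, amounting to closure of Schur-positive operators under composition, and even granting it you give no mechanism forcing the Schur-support of $A^m$ to pick up exactly one new minimal projection per step. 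Since you yourself flag this as ``the delicate point,'' the proof is incomplete precisely at its load-bearing step.

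It is worth noting that the paper's route shows most of your lemma is unnecessary. Pairwise Schur-orthogonality does not require knowing where $A^i\bullet A_j$ lives: by associativity of $\bullet$ and the inductive hypothesis $A_k\bullet A_j=\delta_{kj}A_j$ for $k,j<i$ (with base case $A\bullet I=0$ from the no-loops assumption), one gets directly
$$B_i\bullet A_j=A^i\bullet A_j-\sum_{k=0}^{i-1}A^i\bullet(A_k\bullet A_j)=A^i\bullet A_j-A^i\bullet(A_j\bullet A_j)=0.$$
Idempotency then follows by expanding $A_i\bullet A_i=\sum_j\alpha_jA_j$ in the full $\bullet$-closed algebra $\Alg$, killing off-diagonal coefficients by orthogonality, and checking $\alpha_i=1$ from the normalization $\beta_i$ exactly as you do; no control of $V_m$ under $\bullet$ is needed for either step. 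The only point where a statement of your lemma's type genuinely enters is in guaranteeing $B_i\neq 0$ together with $\spanlin\{A_0,\dots,A_i\}=V_i$ --- the paper phrases this as the transformation $(A^i)\mapsto(A_i)$ being triangular with nonzero diagonal, and treats it as evident. I would therefore restructure: take orthogonality and idempotency from the telescoping computation above, and concentrate your effort on the single remaining claim $A^i\bullet A_j\in V_{i-1}$ for $j<i$ (equivalently the triangularity), rather than on the stronger and harder statement $A^k\bullet A^l\in V_{\min(k,l)}$ in full generality.
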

\begin{proof}
First, recall that by the assumption of distance regularity, the set $\{A^i\}_{i=0}^d$ is linearly independent and spans a coherent algebra, which is clearly commutative with respect to composition. Now, notice that the transformation $(A^i)\to(A_i)$ is triangular with ones on diagonal, so it is invertible. Hence, the set $\{A_i\}_{i=0}^d$ must also be linearly independent, commutative and span the same algebra.

From the assumption $A^k\bullet A^l=A^l\bullet A^k$, it follows that the elements $A_i$ commute also with respect to the Schur product. Now, it remains to show that $\{A_0,\dots,A_d\}$ is the unique basis of the coherent algebra $\Alg=\spanlin\{I,A,\dots,A^d\}$ by mutually orthogonal Schur projections. Indeed, in the proof of Theorem~\ref{T.AlgAss}, we showed that such a basis must already satisfy all the axioms of a cocommutative quantum association scheme.

The fact that $A_i=A_i^*$ follows from $A=A^*$ and the fact that $*$ is functorial.

Now, we prove that $A_i\bullet A_j=0$ for $i\neq j$ by induction. The base case $A_1\bullet A_0=A\bullet I=0$ holds by our assumption that the graph has no loops. Now, suppose that $i>j$ and, as induction hypothesis, assume that $A_k\bullet A_l=0$ whenever $k,l<i$. Then $\beta_iA_i\bullet A_j=A^i\bullet A_j-\sum_{k=0}^{i-1}A^i\bullet A_k\bullet A_j=A^i\bullet A_j-A^i\bullet A_j=0$. Also note that $A_i\bullet A_i\neq 0$ since otherwise we would have $0=\eta^\dag (A_i\bullet A_i)\eta=\CTr(A_i\Ctrans A_i)$ and hence $A_i=0$.

Finally, denote $A_i\bullet A_i=\sum\alpha_j A_j$. But then $0=A_i\bullet A_i\bullet A_k=\alpha_k A_k\bullet A_k$ for $k\neq i$, so $\alpha_k=0$ and $A_i\bullet A_i=\alpha_i A_i$. Finally,
$$\frac{(\eta^\dag B_i\eta)^2}{\eta^\dag(B_i\bullet B_i)\eta}=\eta^\dag(A_i\bullet A_i)\eta=\alpha_i\eta^\dag A_i\eta=\alpha_i\frac{(\eta^\dag B_i\eta)^2}{\eta^\dag(B_i\bullet B_i)\eta},$$
so $\alpha_i=1$.
\end{proof}

\begin{defn}
A distance regular quantum graph with cocommutative coherent algebra will be called \emph{cocommutative}.
\end{defn}

\begin{prop}
Let $X$ be a quantum space and $A\colon l^2(X)\to l^2(X)$ an adjacency matrix of a cocommutative distance regular quantum graph with no loops. Then the corresponding CCQAS can be equivalently defined by
\begin{equation}\label{eq.DT2}
A_i=\frac{1}{\tilde\beta_i}\tilde B_i,\quad \tilde B_i=AA_{i-1}-\sum_{j=0}^{i-1}(AA_{i-1})\bullet A_j,\quad\tilde\beta_i=\frac{\eta^\dag\tilde B_i\bullet\tilde B_i\eta}{\eta^\dag(\tilde B_i)\eta}.
\end{equation}
\end{prop}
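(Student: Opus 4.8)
The plan is to argue by induction on $i$ that the recursion \eqref{eq.DT2} reproduces exactly the CCQAS $\Ass=(A_0,\dots,A_d)$ obtained from $A$ via \eqref{eq.DT} in the preceding proposition. I would impose $A_0=I$ as the base case and, for $i=1$, check directly that $\tilde B_1=AA_0-(AA_0)\bullet A_0=A-A\bullet I=A$ (using that the graph has no loops) and $\tilde\beta_1=\eta^\dag(A\bullet A)\eta/\eta^\dag A\eta=1$ (using $A\bullet A=A$), so $A_1=A$ as required. Then I would fix $i\ge2$ and assume that the two recursions agree for all indices $<i$.

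Before the inductive step I would record a consequence of the preceding proposition. Since the $A_j$ are mutually orthogonal Schur projections, expanding $A^j=\sum_l c^{(j)}_l A_l$ in the basis $\{A_0,\dots,A_d\}$ gives $A^j\bullet A_k=c^{(j)}_k A_k$, so $B_j=A^j-\sum_{k<j}A^j\bullet A_k=\sum_{l\ge j}c^{(j)}_l A_l$; but by \eqref{eq.DT} this equals $\beta_jA_j$, forcing $c^{(j)}_l=0$ for $l>j$ and $c^{(j)}_j=\beta_j\ne0$. Comparing dimensions then yields $\spanlin\{I,A,\dots,A^j\}=\spanlin\{A_0,\dots,A_j\}$ for every $j\le d$. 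In particular $A_{i-1}\in\spanlin\{I,\dots,A^{i-1}\}$ but $A_{i-1}\notin\spanlin\{I,\dots,A^{i-2}\}=\spanlin\{A_0,\dots,A_{i-2}\}$, so $A_{i-1}=q(A)$ with $q$ a polynomial of degree exactly $i-1$.

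For the inductive step I would then compute: $AA_{i-1}=Aq(A)$ lies in $\spanlin\{A,\dots,A^i\}\subseteq\spanlin\{A_0,\dots,A_i\}$, say $AA_{i-1}=\sum_{k=0}^i b_kA_k$, and $b_i\ne0$, because otherwise $AA_{i-1}\in\spanlin\{A_0,\dots,A_{i-1}\}=\spanlin\{I,\dots,A^{i-1}\}$ would present the degree-$i$ polynomial $Aq(A)$ as one of degree $\le i-1$, contradicting the linear independence of $\{I,A,\dots,A^d\}$ (note $i\le d$). Using Schur-orthogonality, $(AA_{i-1})\bullet A_j=b_jA_j$, so $\tilde B_i=\sum_{k=0}^i b_kA_k-\sum_{j=0}^{i-1}b_jA_j=b_iA_i$. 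Then $\tilde B_i\bullet\tilde B_i=b_i^2A_i$ and, since $\eta^\dag A_i\eta=\eta^\dag(A_i\bullet A_i^*)\eta=\CTr(A_iA_i^\dag)>0$ (as $A_i\ne0$), one gets $\tilde\beta_i=b_i^2\eta^\dag A_i\eta/(b_i\eta^\dag A_i\eta)=b_i$, whence $\tilde B_i/\tilde\beta_i=A_i$. This closes the induction.

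I expect the only non-routine point to be the verification that $b_i\ne0$, i.e.\ that multiplying $A_{i-1}$ by $A$ genuinely \emph{advances} to the next Schur projection instead of collapsing into the span of the earlier ones; this is precisely where distance regularity enters, via the absence of a polynomial relation of degree $\le d$ for $A$ together with the fact that $A_{i-1}$ has polynomial degree exactly $i-1$. Everything else is bookkeeping with the orthogonality of the Schur projections and the normalization already used in \eqref{eq.DT}.
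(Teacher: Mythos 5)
Your proof is correct and is essentially the paper's argument: the paper disposes of this in one line by observing that \eqref{eq.DT2} is the same Gram--Schmidt orthogonalization as \eqref{eq.DT} with the inputs altered only by terms in the span of the earlier projections, and your induction carries out exactly that observation explicitly. The one point you verify that the paper leaves implicit---that the coefficient $b_i$ of $A_i$ in $AA_{i-1}$ is nonzero (indeed the paper's phrase ``differ by a linear combination of $A_0,\dots,A_{i-1}$'' also elides a nonzero scalar factor $1/\beta_{i-1}$, which, as you check, is absorbed by the normalization $\tilde\beta_i$)---is worth having on record.
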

\begin{proof}
The formula \eqref{eq.DT} is just the Gram--Schmidt orthogonalization of the coherent algebra basis $\{I,A,A^2,\dots,A^d\}$. In formula \eqref{eq.DT2}, we replace $A^i$ by $AA_{i-1}$. But these two only differ by a linear combination of $A_0,A_1,\dots,A_{i-1}$ so the result of the orthogonalization procedure must be the same.
\end{proof}

\begin{prop}
Let $X$ be a quantum space and $A\colon l^2(X)\to l^2(X)$ an adjacency matrix of a simple cocommutative distance regular quantum graph. Denote its degree by $k$. Then there are sequences of numbers $(a_i)_{i=0}^d$, $(b_i)_{i=0}^{d-1}$, $(c_i)_{i=1}^d$ such that, for every $i=0,\dots,d$
\begin{equation}\label{eq.DT3}
AA_i=c_{i+1}A_{i+1}+a_iA_i+b_{i-1}A_{i-1}.
\end{equation}
Here, we use the convention $c_{d+1}A_{d+1}=0=b_{-1}A_{-1}$. These sequences satisfy
$$a_i+b_i+c_i=k,\quad c_i\neq0\qquad\text{for every $i$,}$$
$$a_0=0,\quad b_0=k,\quad c_1=1.$$
\end{prop}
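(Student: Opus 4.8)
The plan is to transcribe the classical argument for distance-regular graphs into the Frobenius/diagrammatic language, relying freely on the two preceding propositions (so $\Ass$ is already known to be a CCQAS). I would begin by recording three facts about its basis $(A_i)$. \emph{(i) Degrees.} From \eqref{eq.DT}, writing $A^i=\sum_l\rho_lA_l$ and Schur-multiplying, Proposition~\ref{P.coco} gives $A^i\bullet A_j=\rho_jA_j$, so $B_i=A^i-\sum_{j<i}\rho_jA_j$ is a polynomial in $A$ of degree exactly $i$ with leading coefficient $1$, hence so is $A_i=\beta_i^{-1}B_i$, with leading coefficient $\beta_i^{-1}\neq0$. \emph{(ii) Symmetry.} Since the graph is simple, $A=A^\dag=A^*$, so $\dag$ preserves $\Alg=\spanlin\{I,A,\dots,A^d\}$; a short check (using \eqref{eq.bulletinv} and $A_m=A_m^*$, which give $A_m^\dag\bullet A_m^\dag=A_m^\dag$ and $(A_m^\dag)^*=A_m^\dag$) shows that $\dag$ permutes the minimal Schur-projections $\{A_0,\dots,A_d\}$ and, sending powers of $A$ to powers of $A$, preserves the degree in $A$ of each; by (i) it must therefore fix each $A_i$, so $A_i^\dag=A_i$, and consequently $A_i\Ctrans=A_i$. \emph{(iii) Traces.} The valencies $n_i$, defined by $A_i\eta=n_i\eta$ (they exist because $A_i\in\Alg$ and $A\eta=k\eta$), are positive, since $A\mapsto\eta^\dag A\eta$ is a faithful positive functional on the Schur C*-algebra ($\eta^\dag(A\bullet A^*)\eta=\CTr(AA^\dag)$); moreover, by \eqref{eq.TrSchur} and Proposition~\ref{P.coco}, $\CTr(A_pA_i)=\eta^\dag(A_p\bullet A_i)\eta=\delta_{pi}\,nn_i$, while $\CTr(A_j)=\eta^\dag(A_j\bullet I)\eta=0$ for $j\geq1$ and $\CTr(I)=\eta^\dag\eta=n$.

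The core of the argument is the three-term recurrence, i.e.\ that $AA_i\in\spanlin\{A_{i-1},A_i,A_{i+1}\}$. Since $A_i$ has degree $\leq i$ in $A$, we have $AA_i\in\spanlin\{A_0,\dots,A_{\min(i+1,d)}\}$; write $AA_i=\sum_m\mu_mA_m$. Schur-multiplying by $A_m$ and applying $\eta^\dag(\cdot)\eta$, Proposition~\ref{P.coco} gives $\mu_m\,nn_m=\eta^\dag(A_m\bullet AA_i)\eta$, and by \eqref{eq.TrSchur} this equals $\CTr(A_m(AA_i)\Ctrans)=\CTr(A_mA_iA)$, using $(AA_i)\Ctrans=A_i\Ctrans A\Ctrans=A_iA$. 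By cyclicity of the categorical trace this is $\CTr((AA_m)A_i)$; expanding $AA_m=\sum_p\nu_pA_p$ (valid because $A_m$ has degree $\leq m$, so $AA_m\in\spanlin\{A_0,\dots,A_{m+1}\}$) and invoking $\CTr(A_pA_i)=\delta_{pi}nn_i$ gives $\CTr((AA_m)A_i)=\nu_i\,nn_i$, where $\nu_i=0$ whenever $i>m+1$. Hence $\mu_m=(n_i/n_m)\,\nu_i=0$ for every $m\leq i-2$, which is exactly the claim. Putting $c_{i+1}:=\mu_{i+1}$, $a_i:=\mu_i$, $b_{i-1}:=\mu_{i-1}$ (with the conventions $c_{d+1}A_{d+1}=0=b_{-1}A_{-1}$) yields \eqref{eq.DT3}, and the ranges $c_1,\dots,c_d$, $a_0,\dots,a_d$, $b_0,\dots,b_{d-1}$ are exactly the indices that occur.

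It then remains to pin down the scalars. Comparing the coefficient of the top power $A^{i+1}$ on the two sides of \eqref{eq.DT3} (the $A_i$- and $A_{i-1}$-terms involve only powers $\leq i$) gives $\beta_i^{-1}=c_{i+1}\beta_{i+1}^{-1}$, so $c_{i+1}=\beta_{i+1}/\beta_i\neq0$; this is legitimate for $i+1\leq d$ (where $A^{i+1}$ is independent of the lower powers), so all of $c_1,\dots,c_d$ are nonzero. From $AA_0=AI=A=A_1$ and linear independence of the basis we read off $a_0=0$ and $c_1=1$. Applying $\CTr$ to $AA_1=A^2=c_2A_2+a_1A_1+b_0A_0$ and using $\CTr(A_j)=0$ for $j\geq1$, $\CTr(A_0)=n$, together with $\CTr(A^2)=\eta^\dag(A\bullet A)\eta=\eta^\dag A\eta=kn$, gives $b_0n=kn$, i.e.\ $b_0=k$. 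Finally, summing \eqref{eq.DT3} over $i=0,\dots,d$ gives $A\sum_iA_i=AJ=kJ$ on the left (since $A\eta=k\eta$) and, after reindexing, $\sum_j(a_j+b_j+c_j)A_j$ on the right (with the conventions $c_0=b_d=0$); linear independence of $\{A_j\}$ then forces $a_j+b_j+c_j=k$ for every $j$.

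The one delicate point is the vanishing $\mu_m=0$ for $m\leq i-2$: it is the Frobenius-language counterpart of the classical observation that a single edge cannot join two vertices whose distances to a fixed vertex differ by more than one, and the trick that makes it go through is to express $\mu_m$ as a categorical trace, reroute it by cyclicity, and then let the degree bound on $AA_m$, rather than the one on $AA_i$, do the work. Everything else (symmetry of the scheme, positivity of the valencies, and the identity $\CTr(A_pA_i)=\delta_{pi}nn_i$) is routine, but each of these is genuinely needed.
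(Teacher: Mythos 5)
Your proof is correct, and while it rests on the same underlying mechanism as the paper's --- the symmetry $\eta^\dag(A_m\bullet AA_i)\eta=\CTr(A_mA_iA)=\eta^\dag(AA_m\bullet A_i)\eta$, i.e.\ self-adjointness of multiplication by $A$ with respect to the trace pairing --- you close the argument differently. The paper proceeds by induction on $i$: the vanishing of the low coefficients of $AA_i$ is deduced from $A_i\bullet AA_j=0$, which in turn uses the already-established three-term form of $AA_j$ for $j<i$. You instead kill the coefficient $\mu_m$ for $m\le i-2$ non-inductively, by rerouting it to the coefficient of $A_i$ in $AA_m$ and invoking the a priori degree bound $AA_m\in\spanlin\{A_0,\dots,A_{m+1}\}$ coming from the triangular structure of \eqref{eq.DT}; this is arguably cleaner, since only ``half'' of the recurrence (no high terms) needs to be known in advance. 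Two further points where you diverge, both to your advantage: you give an explicit leading-coefficient argument for $c_{i+1}=\beta_{i+1}/\beta_i\neq0$, which the paper leaves implicit (there it follows from \eqref{eq.DT2} via $c_{i+1}=\tilde\beta_{i+1}$), and you obtain $a_i+b_i+c_i=k$ by summing \eqref{eq.DT3} against $\sum_iA_i=J$ and comparing coefficients, rather than by the paper's direct $\eta^\dag(\cdot)\eta$ computation. Your preliminary facts are all sound; the only mild inefficiency is step (ii), since $A_i\Ctrans=A_i$ follows immediately from $A_i$ being a polynomial in $A$ with $A\Ctrans=A$ (the categorical transpose being $\C$-linear and contravariant), without any need to show that $\dag$ permutes the minimal Schur projections. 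The positivity of the valencies $n_m$, which you do establish, is genuinely needed to divide by $n_m$ and corresponds to the paper's use of $\eta^\dag A_j\eta\neq0$.
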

\begin{proof}
We are going to prove this by induction. First, notice that $AA_0=AI=A=c_1A+a_0$, where $c_1=1$, $a_0=0$. Secondly, $\tilde\beta_2 A_2=AA_1-(AA_1)\bullet A_1 - (AA_1)\bullet A_0$. Since $A_0,\dots,A_d$ are minimal Schur projections, this indeed means that ${(A^2=)}AA_1=c_2A_2+a_1A_1+b_0A_0$. Schur-multiplying with $I=A_0$, we get $A^2\bullet I=b_0I$. Consequently, $b_0\eta^\dag\eta=\eta^\dag(A^2\bullet I)\eta=\eta^\dag(A\bullet A)\eta=\eta^\dag A\eta=k\eta^\dag\eta$, so $b_0=k$.

Now, pick any $i>1$ and suppose that formula \eqref{eq.DT3} holds for any index smaller. Again, by definition of $A_i$, we have $AA_i=\sum_{j=0}^{i+1}\lambda_{ij} A_j$, for some $\lambda_{ij}$. We need to show that $\lambda_{ij}=0$ if $j<i-1$. By induction hypothesis, we have $A_i\bullet AA_j=0$. Thus, $0=\eta^\dag(A_i\bullet AA_j)\eta=\eta^\dag(AA_i\bullet A_j)$. Since $AA_i\bullet A_j$ is a multiple of $A_j$, which is a Schur projection with $\eta^\dag A_j\eta\neq0$, this already implies that $0=AA_i\bullet A_j$, which is what we wanted to show.

It remains to prove that $a_i+b_i+c_i=k$. Denote $k_i$ the valency of $A_i$. Then
\begin{align*}
(a_i+b_i+c_i)k_i\eta^\dag\eta&=\eta^\dag(b_iA_i+a_iA_i+c_iA_i)\eta\\&=\eta^\dag(AA_{i+1}\bullet A_i+AA_i\bullet A_i+AA_{i-1}\bullet A_i)\eta\\&=\eta^\dag(A_{i+1}\bullet AA_i+A_i\bullet AA_i+A_{i-1}\bullet AA_i)\eta\\&=\eta^\dag(AA_i\bullet\sum_{j=0}^d A_j)\eta=\eta^\dag(AA_i\bullet J)\eta=\eta^\dag AA_i\eta=kk_i\eta^\dag\eta.
\end{align*}
\end{proof}

\begin{defn}
The sequence of numbers $(b_0,b_1,\dots,b_{d-1};c_1,c_2,\dots,c_d)$ from the previous proposition is called the \emph{intersection array} of a given distance regular quantum graph.
\end{defn}

\begin{prop}
The property of being distance regular as well as the intersection array are invariant with respect to quantum isomorphisms.
\end{prop}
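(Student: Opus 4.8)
The plan is to reduce both claims to the discussion following Definition~\ref{D.qiso}: a quantum isomorphism $F\colon\langle m_1,\eta_1,A_1\rangle\to\langle m_2,\eta_2,A_2\rangle$ is an isomorphism of monoidal $*$-categories, so it matches up \emph{every} morphism obtained from $m$, $\eta$, $A$, their categorical adjoints, and compositions and tensor products. Concretely, $F(m_1^\dag)=m_2^\dag$ and $F(\eta_1^\dag)=\eta_2^\dag$, so $F$ intertwines the Schur product $\bullet=m(\,\cdot\,\otimes\,\cdot\,)m^\dag$, the bilinear forms $\beta=\eta^\dag m$ and $\beta^\dag$, hence the categorical transpose $\Ctrans$ and the involution $A^*=(A^\dag)\Ctrans$; moreover $F(I)=I$ and $F(J)=F(\eta_1\eta_1^\dag)=\eta_2\eta_2^\dag=J$. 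Since $F$ is $\C$-linear on the morphism spaces it preserves linear (in)dependence, and since it fixes $\operatorname{End}(\C)=\C$ it preserves any scalar obtained by closing up a diagram, e.g.\ expressions of the form $\eta^\dag(\,\cdot\,)\eta$. I would set all of this out as a short ``dictionary'' first.

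Granting the dictionary, invariance of distance regularity is immediate. If $\spanlin\{I,A_1,\dots,A_1^{\,d}\}$ is a $(d+1)$-dimensional coherent algebra, then its image under $F$ is $\spanlin\{I,A_2,\dots,A_2^{\,d}\}$, which is still $(d+1)$-dimensional and still closed under composition, $\bullet$, $\dag$, $*$ and contains $I$ and $J$; hence $A_2$ is distance regular of the same diameter. Applying the same to $F^{-1}$ gives the equivalence. The additional hypotheses needed to speak of the intersection array also transfer: ``simple'' means $A=A^\dag$ together with $A\bullet I=0$, and ``cocommutative'' means $A^k\bullet A^l=A^l\bullet A^k$ for all $k,l$ — all of these are equations among morphisms built from $A$, $m$, $\eta$, hence preserved by $F$.

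Finally, for the intersection array itself, I would run a short induction showing that $F$ carries the Gram--Schmidt sequence $(A_i)$ of Eq.~\eqref{eq.DT} for $A_1$ term-by-term onto the corresponding sequence for $A_2$: the base cases $A_0=I$, $A_1=A$ are matched, and each $B_i=A^i-\sum_{j<i}A^i\bullet A_j$ and each scalar $\beta_i=\eta^\dag(B_i\bullet B_i)\eta/\eta^\dag(B_i)\eta$ is assembled from ingredients $F$ already matches. Therefore $F$ restricts to a bijection between the two CCQAS, and because it intertwines the composition and fixes scalars, the (unique) coefficients in $A\,A_i=c_{i+1}A_{i+1}+a_iA_i+b_{i-1}A_{i-1}$ coincide for the two graphs; thus the intersection array $(b_0,\dots,b_{d-1};c_1,\dots,c_d)$ agrees.

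I expect the only genuine work to lie in the dictionary of the first paragraph — checking that an isomorphism of monoidal $*$-categories really intertwines $m^\dag$, $\eta^\dag$, $\bullet$, $\Ctrans$, $*$ and fixes scalars. Once that is in place, everything else is purely formal: distance regularity is a property of the span of the powers of $A$, and the intersection array is read off from the construction \eqref{eq.DT} together with a linear expansion, both of which $F$ transports verbatim. No estimates or delicate computations are involved.
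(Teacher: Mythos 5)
Your proposal is correct and follows the same route as the paper: the paper's proof is a one-line appeal to the fact (already noted after Definition~\ref{D.qiso}) that a quantum isomorphism induces an isomorphism of the coherent algebra preserving both products, both units, both involutions, and scalars, which is exactly the ``dictionary'' you set out before transporting the span of powers of $A$ and the Gram--Schmidt sequence of Eq.~\eqref{eq.DT}. Your version simply makes explicit the bookkeeping the paper leaves implicit.
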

\begin{proof}
Follows from the fact that a quantum isomorphism induces an isomorphism of the coherent algebra (preserving both products, both units, and both involutions).
\end{proof}

\subsection{Hadamard graphs}
\label{secc.Hadamard}

\begin{defn}[\cite{GroQHad}]
Let $X$ be a quantum space with $N=\eta^\dag\eta$. A \emph{quantum Hadamard matrix} is a linear map $H\colon l^2(X)\to l^2(X)$ such that 
$$H=H^*,\qquad H\bullet H=\eta\eta^\dag,\qquad HH^\dag=N\,\id=H^\dag H.$$
\end{defn}

\begin{ex}[\cite{GroQHad}]\label{E.transpose}
Consider the finite quantum space $X=M_n$, that is, $C(X)=M_n(\C)$, the counit is given by $\eta^\dag=n\Tr$, so $N:=\eta^\dag\eta=n^2$. Then the map $H\colon M_n(\C)\to M_n(\C)$ acting by $H(a)=n\,a^{\rm T}$, where $a^{\rm T}$ is the matrix transposition, is a quantum Hadamard matrix.
\end{ex}

\begin{defn}[\cite{GroQHad}]
Let $X$ be a quantum space. Denote $Y:=X\sqcup X\sqcup X\sqcup X$, that is,
$$C(Y)=C(X)\oplus C(X)\oplus C(X)\oplus C(X)=\C^4\otimes C(X).$$
Let $H\colon l^2(X)\to l^2(X)$ be a quantum Hadamard matrix. We define its associated \emph{quantum Hadamard graph} on $Y$ through the adjacency matrix $A\colon l^2(Y)\to l^2(Y)$
$$A=\begin{pmatrix}
0&0&H^+&H^-\\
0&0&H^-&H^+\\
H^{+\dag}&H^{-\dag}&0&0\\
H^{-\dag}&H^{+\dag}&0&0
\end{pmatrix},
$$
where $H^{\pm}=\frac{1}{2}(J\pm H)$.
\end{defn}

\begin{prop}
Let $X$ be a quantum space of dimension $N:=\eta^\dag\eta\neq 3$ and consider a quantum Hadamard matrix $H$ on $X$. Then the associated quantum Hadamard graph is distance regular with diameter four. Its intersection array equals to $(N,N-1,N/2,1;1,N/2,N-1,N)$.
\end{prop}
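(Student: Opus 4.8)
The plan is to compute the powers $A^2,A^3,A^4$ explicitly, to read off from them the five mutually Schur-orthogonal Schur idempotents of the underlying quantum association scheme, and then to extract the intersection array from the recursion of the preceding propositions. Before that, note that $A$ defines a \emph{simple} quantum graph: $A=A^\dag$ and $A\bullet I=0$ hold because the diagonal blocks of $A$ vanish, and $A=A^*$ since $H=H^*$ and $J=J^*$ (so each $H^{\pm}=\tfrac12(J\pm H)$ is $*$-preserving). Hence, once $A$ is shown to be a cocommutative distance regular quantum graph of diameter four, the preceding propositions apply and produce the intersection array.

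To organize the computation, write $A=\left(\begin{smallmatrix}0&U\\U^\dag&0\end{smallmatrix}\right)$ on $l^2(Y)=l^2(X\sqcup X)\oplus l^2(X\sqcup X)$, and identify $l^2(X\sqcup X)=\C^2\otimes l^2(X)$, so that $U=p_+\otimes J+p_-\otimes H$, where $p_\pm=\tfrac12\left(\begin{smallmatrix}1&\pm1\\\pm1&1\end{smallmatrix}\right)$ are the two minimal projections of $M_2(\C)$. Because $p_\pm^2=p_\pm$ and $p_+p_-=p_-p_+=0$, every product of such operators has all its cross terms killed, so the only relations from $X$ that are ever used are the quantum Hadamard identities $HH^\dag=H^\dag H=NI$ and $H\bullet H=J$, together with $J^\dag=J$ and $JJ=NJ$. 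A direct calculation then gives $UU^\dag=U^\dag U=N(p_+\otimes J+p_-\otimes I)$, hence $A^2=N\left(\begin{smallmatrix}p_+\otimes J+p_-\otimes I&0\\0&p_+\otimes J+p_-\otimes I\end{smallmatrix}\right)$, and likewise closed forms for $A^3$ and $A^4$; in particular one finds $A^5\in\spanlin\{A,A^3\}$.

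Next set $A_0=I$, $A_1=A$, let $A_4$ be the operator interchanging the two copies of $l^2(X)$ inside each of the two halves of $l^2(Y)$, and let $A_2$, $A_3$ be the linear combinations of $I,A,A^2,A^3,A^4$ whose blocks are, respectively, $J-I$ on the ``within-half'' positions (and $0$ elsewhere) and the blocks of $A$ with $H^+$ and $H^-$ interchanged. Recalling that the multiplication of $C(Y)=\bigoplus_{a=1}^4C(X)$ is block diagonal, so that the Schur product of operators on $l^2(Y)$ is computed blockwise, and using only that $J$ is the Schur unit, that $I\bullet I=I$ (specialness of $X$) and that $H^+,H^-$ are Schur-orthogonal Schur idempotents (from $H\bullet H=J$), one checks blockwise that $A_i\bullet A_j=\delta_{ij}A_i$ and $\sum_{i=0}^4A_i=J$, and conversely that each $A^k$ with $k\le 4$ lies in $\spanlin\{A_0,\dots,A_4\}$. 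Hence $\Alg:=\spanlin\{I,A,A^2,A^3,A^4\}=\spanlin\{A_0,\dots,A_4\}$; the $A_i$ are nonzero (this uses $N\ge 2$, so that $J\ne I$ and $H^{\pm}\ne0$) mutually Schur-orthogonal idempotents, hence linearly independent, and so $\dim\Alg=5$. Thus $\Alg$ is a five-dimensional unital $\bullet$-algebra that is cocommutative and contains $J$; it consists of $\dag$- and $*$-fixed operators (since $A=A^\dag=A^*$), so it is closed under both involutions; and $A^5\in\spanlin\{A,A^3\}\subseteq\Alg$ makes it closed under composition. Therefore $A$ is a simple cocommutative distance regular quantum graph of diameter four.

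It remains to compute the intersection array. By uniqueness of the Schur-projection basis, the $A_i$ above coincide with those of the preceding propositions, so $AA_i=c_{i+1}A_{i+1}+a_iA_i+b_{i-1}A_{i-1}$ holds, and the coefficients can be pinned down one at a time. From $A\eta=N\eta$ (because $(H^++H^-)\eta_X=J\eta_X=N\eta_X$ in each summand) we get $b_0=k=N$; Schur-multiplying $A^2=c_2A_2+a_1A_1+b_0A_0$ by $A$ gives $a_1=0$ ($A^2\bullet A=0$ because their block supports are disjoint), and then $A^2-NI=c_2A_2$ combined with $A_2\bullet A_2=A_2$ forces $c_2=N/2$; the same procedure applied to $AA_2$ yields $a_2=0$, $b_1=N-1$, $c_3=N-1$, and applied to $AA_3$ yields $a_3=0$, $b_2=N/2$, $c_4=N$; finally $c_1=1$ and $b_3=k-a_3-c_3=1$. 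Altogether the intersection array is $(N,N-1,N/2,1;\,1,N/2,N-1,N)$. The only genuine work is the mechanical blockwise bookkeeping for the powers of $A$, which the $p_\pm$-decomposition makes painless, and the check that $\dim\Alg$ equals $5$ and not less — that is, that the diameter is exactly four — which is where the non-degeneracy hypothesis on $N$ enters.
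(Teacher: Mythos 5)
Your proposal is correct and follows essentially the same route as the paper: it exhibits the same five Schur idempotents $A_0,\dots,A_4$ (with $A_2$ built from $J-I$ blocks, $A_3$ from $A$ with $H^+$ and $H^-$ swapped, and $A_4$ the block swap) and verifies the same four product relations $AA_i=c_{i+1}A_{i+1}+a_iA_i+b_{i-1}A_{i-1}$; the $p_\pm$ decomposition of the off-diagonal block is just a convenient bookkeeping device for the computation the paper calls ``straightforward,'' and your write-up is in fact more explicit than the paper's about why the resulting span is a coherent algebra. The only soft spot is the closing remark attributing the hypothesis $N\neq 3$ to the dimension count: the linear independence of $I,A,\dots,A^4$ only visibly requires $N\neq 0,1$, so that sentence overstates what your argument actually uses.
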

\begin{proof}
A straightforward computation shows that
\begin{align*}
AA_1&=N I+N/2\, A_2\\
AA_2&=(N-1)A_1+(N-1)A_3\\
AA_3&=N/2\,A_2+NA_4\\
AA_4&=A_3
\end{align*}
with
$$A_1=A,\quad
A_2=
\begin{pmatrix}
J-I&J-I&0&0\\
J-I&J-I&0&0\\
0&0&J-I&J-I\\
0&0&J-I&J-I
\end{pmatrix},
$$$$
A_3=
\begin{pmatrix}
0&0&H^-&H^+\\
0&0&H^+&H^-\\
H^{-\dag}&H^{+\dag}&0&0\\
H^{+\dag}&H^{-\dag}&0&0
\end{pmatrix},\quad
A_4=
\begin{pmatrix}
0&I&0&0\\
I&0&0&0\\
0&0&0&I\\
0&0&I&0
\end{pmatrix}.
$$
\end{proof}

\begin{rem}
This fact is well known for classical Hadamard matrices and graphs. Actually, a classical graph is a Hadamard graph corresponding to an $N\times N$ Hadamard matrix if and only if it is distance regular with intersection array $(N,N-1,\ppen N/2,1;\ppen 1,N/2,\ppen N-1,N)$ \cite[Theorem~1.8.1]{BCN89}. If the dimension $\eta^\dag\eta$ of the quantum space equals to some $N\in\N$ such that there exists a classical $N\times N$ Hadamard matrix, then our proposition follows from this classical result and from the fact that all quantum Hadamard graphs of a given size are quantum isomorphic \cite{GroQHad}.

On the other hand, if we choose $N$ such that there is no Hadamard matrix of this size, then the quantum Hadamard graph cannot be quantum isomorphic to any classical graph. For instance, we can take the transposition example (Ex.~\ref{E.transpose}) for $n>2$ odd. This shows that it makes sense to study cocommutative association schemes. Even though they are cocommutative, they bring new examples that have no classical counterpart.

It is interesting also from the viewpoint of quantum graphs. It disproves the converse of our considerations in \cite[Section~8.2]{GQGraph}, where we constructed a quantum graph which is not quantum isomorphic with any classical one by ensuring that its coherent algebra is not cocommutative. Here, we show that there may exist quantum graphs whose coherent algebra is cocommutative, but they are, nevertheless, not quantum isomorphic to any classical ones.
\end{rem}

\subsection{Strongly regular quantum graphs}
\begin{defn}
Let $X$ be a finite quantum space and let $A\colon l^2(X)\to l^2(X)$ define a simple quantum graph. We say that the quantum graph is \emph{strongly regular} with parameters $(n,k,\lambda,\mu)$ if $\eta^\dag\eta=n$, $A^2=kI+\lambda A+\mu (J-I-A)$.
\end{defn}

First, a couple of observations explaining the motivation, which directly generalize the classical properties.

\begin{prop}
A strongly regular (simple) quantum graph with parameters $(n,k,\lambda,\mu)$ is $k$-regular.
\end{prop}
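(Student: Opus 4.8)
The plan is to mimic the classical argument, where one reads $\deg(i)=(A^2)_{ii}=k$ off the diagonal of the equation $A^2=kI+\lambda A+\mu(J-I-A)$. In the quantum setting, taking the ``diagonal'' of a map means Schur-multiplying it by $I$, so the first step is to Schur-multiply the defining equation by $I$. Using that $I$ is a Schur idempotent ($I\bullet I=mm^\dag=I$ by speciality), that $J$ is the unit for $\bullet$ (hence $J\bullet I=I$), and that the graph has no loops ($A\bullet I=0$), the whole right-hand side collapses and we obtain $A^2\bullet I=k\,I$.

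The heart of the proof is then the identity $(A^2\bullet I)\,\eta=A\eta$, valid for an arbitrary simple quantum graph. I would prove it diagrammatically. By Schur idempotency of $A$,
\[
A\eta=(A\bullet A)\,\eta=m(A\otimes A)\,m^\dag\eta ,
\]
and $m^\dag\eta=\beta^\dag$ is the cup. Sliding one of the two copies of $A$ around the cup turns it, by the snake equation~\eqref{eq.snake}, into its categorical transpose $A\Ctrans$; and since $A$ is simple we have $A\Ctrans=(A^\dag)\Ctrans=A^*=A$ (the graph is undirected and $*$-preserving). Hence $m(A\otimes A)\,m^\dag\eta=m(A^2\otimes\id)\,m^\dag\eta=(A^2\bullet I)\,\eta$. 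Combining the two steps,
\[
A\eta=(A^2\bullet I)\,\eta=(kI)\,\eta=k\eta ,
\]
and $\eta^\dag A=k\eta^\dag$ then comes for free because the graph is undirected (cf.\ the remark after Def.~\ref{D.regular}; alternatively, apply $\dag$ to $A\eta=k\eta$ and note that $k=\tfrac1n\langle\eta,A\eta\rangle$ is real since $A$ is self-adjoint). Thus $A$ is regular of degree $k$.

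The only genuinely nontrivial ingredient is the sliding step $(A^2\bullet I)\eta=A\eta$; the remaining manipulations are routine bookkeeping with the Frobenius and speciality relations, exactly as in Theorem~\ref{T.AlgAss} and the surrounding computations. For contrast, a purely computational route would first deduce $\eta^\dag A\eta=\CTr(A^2)=kn$ from $A^2\bullet I=kI$ via~\eqref{eq.TrSchur} and then expand $\|A\eta-k\eta\|^2$ using the defining equation; but that expression vanishes only once one has also established the parameter identity $k(k-\lambda-1)=\mu(n-k-1)$, so the diagrammatic argument is the cleaner one to present here.
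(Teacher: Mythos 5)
Your proposal is correct and follows essentially the same route as the paper: Schur-multiply the defining relation by $I$ to get $A^2\bullet I=kI$, then use $A\eta=(A\bullet A)\eta=(A^2\bullet I)\eta$ via the sliding/transpose identity $A=A\Ctrans$ (which the paper also invokes). Your extra details — the collapse of the right-hand side using $A\bullet I=0$ and $J\bullet I=I$, and the deduction of $\eta^\dag A=k\eta^\dag$ from undirectedness — are just the bookkeeping the paper leaves implicit.
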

\begin{proof}
We can compute $A^2\bullet I=kI$ (where we use the fact that $J\bullet I=I$). Then $A\eta=(A\bullet A)\eta=(A^2\bullet I)\eta=k\eta$ (in the second equality, we used $A=A\Ctrans$).
\end{proof}

\begin{prop}
A simple quantum graph is strongly regular if and only if $\spanlin\{I,J,A\}$ is a coherent algebra.
\end{prop}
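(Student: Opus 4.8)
The plan is to verify both implications directly against the coherent-algebra axioms of Definition~\ref{D.Qcoherent}, applied to $\Alg:=\spanlin\{I,J,A\}$.

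For the forward direction I would assume $A$ is strongly regular with parameters $(n,k,\lambda,\mu)$ and first recall, from the preceding proposition, that $A$ is then $k$-regular: $A\eta=k\eta$ and $\eta^\dag A=k\eta^\dag$. The space $\Alg$ contains $I$ and $J$ by construction. It is stable under $\dag$ since $I^\dag=I$, $A^\dag=A$ (the graph is undirected) and $J^\dag=(\eta\eta^\dag)^\dag=J$, and stable under $*$ since $I$ and $A$ are $*$-preserving and $J^*=J$ (indeed $J(f)=\psi(f)\,1$ and $\psi(f^*)=\overline{\psi(f)}$). For the Schur product I would check $I\bullet I=mm^\dag=I$ since $mm^\dag=\id$, then $J\bullet J=J$ and $I\bullet J=I$ because $J$ is the Schur unit, $A\bullet A=A$ by the quantum-graph axiom, and $A\bullet I=0$ because the graph has no loops. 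For the composition I would use $JJ=nJ$ (from $\eta^\dag\eta=n$), $AJ=JA=kJ$ (from $k$-regularity), and
\[
A^2=kI+\lambda A+\mu(J-I-A)=(k-\mu)I+(\lambda-\mu)A+\mu J\in\Alg .
\]
Since every product and involution of the three spanning elements lands back in $\Alg$, it is a coherent algebra.

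For the converse I would assume $\Alg=\spanlin\{I,J,A\}$ is a coherent algebra. Being closed under composition it contains $A^2$, so $A^2=\alpha I+\beta J+\gamma A$ for some scalars $\alpha,\beta,\gamma$. Applying $*$ — which is conjugate-linear, covariant, and fixes $I$, $J$, $A$ — yields $A^2=\bar\alpha I+\bar\beta J+\bar\gamma A$, so after passing to real parts I may take $\alpha,\beta,\gamma\in\R$. Rewriting
\[
A^2=(\alpha+\beta)I+(\gamma+\beta)A+\beta(J-I-A)
\]
then exhibits $A$ as strongly regular with parameters $(n,k,\lambda,\mu)=(\eta^\dag\eta,\ \alpha+\beta,\ \gamma+\beta,\ \beta)$.

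I do not expect a genuine obstacle. The points that need a little care are (i) invoking the regularity proposition in the forward direction to control the mixed products $AJ$ and $JA$, and (ii) the bookkeeping translating between ``$A^2\in\spanlin\{I,J,A\}$'' and the normalized form $kI+\lambda A+\mu(J-I-A)$ of the definition, including the remark that the resulting parameters are real. Degenerate cases — for instance $A=J-I$, where $\{I,J,A\}$ spans only a two-dimensional coherent algebra — require no change, since the same computations go through verbatim.
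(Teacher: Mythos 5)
Your proof is correct and follows essentially the same route as the paper's (which is much terser): closure under Schur product and involutions comes from simplicity, closure under composition from strong regularity plus $k$-regularity, and conversely $A^2\in\spanlin\{I,J,A\}$ is just a re-parametrization of the strong regularity equation. Your explicit appeal to the regularity proposition to handle $AJ$ and $JA$ fills in a detail the paper leaves implicit, and the reality-of-parameters remark is a harmless extra check.
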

\begin{proof}
By definition of a simple quantum graph $\spanlin\{I,J,A\}$ is closed under the Schur product and the involutions. By definition of strong regularity, it is closed under composition. Conversely, in any such coherent algebra, we can introduce parameters $k$, $\lambda$, and $\mu$ such that $A^2=kI+\lambda A+\mu(J-I-A)$.
\end{proof}

\begin{prop}
The complement of a strongly regular graph with parameters $(n,k,\lambda,\mu)$ is strongly regular with parameters $(n,n-1-k,n-2k+\mu-2,n-2k+\lambda)$
\end{prop}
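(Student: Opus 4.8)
The plan is to set $\bar A:=J-I-A$ (the complement's adjacency matrix), verify that $\bar A$ indeed defines a simple quantum graph on the same quantum space, and then compute $\bar A^2$ explicitly and read off the four parameters.

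First I would check that $\bar A$ is a simple quantum graph. It is self-adjoint and $*$-invariant because $I$, $J=\eta\eta^\dag$ and $A$ all are ($J^\dag=J$, $J^*=J$ since $J$ is the self-adjoint unit of the Schur C*-algebra, and $A=A^\dag=A^*$ as the original graph is simple). Schur-idempotency and loop-freeness come from the ``classical'' Schur relations among $I,J,A$: since $J$ is the Schur unit and $I\bullet I=I$, $A\bullet A=A$, $A\bullet I=0$, expanding $\bar A\bullet\bar A=(J-I-A)\bullet(J-I-A)$ collapses to $J-I-A=\bar A$, and likewise $\bar A\bullet I=I-I-0=0$. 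The dimension $\eta^\dag\eta=n$ is of course unchanged. (Alternatively: $\spanlin\{I,J,A\}=\spanlin\{I,J,\bar A\}$ is a coherent algebra, so by the previous proposition $\bar A$ is automatically strongly regular once it is known to be simple; then only the parameters remain to be computed.)

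For the computation I record three identities: $J^2=nJ$ (from $\eta^\dag\eta=n$, since $J^2=\eta\eta^\dag\eta\eta^\dag$), $AJ=JA=kJ$ (a strongly regular graph is $k$-regular, so $A\eta=k\eta$ and $\eta^\dag A=k\eta^\dag$), and $A^2=kI+\lambda A+\mu(J-I-A)=(k-\mu)I+(\lambda-\mu)A+\mu J$. Expanding $(J-I-A)^2$ and substituting gives
$$\bar A^2=(n-2k-2)J+I+2A+A^2=(n-2k-2+\mu)J+(k-\mu+1)I+(\lambda-\mu+2)A.$$
Rewriting in the basis $\{I,A,\bar A\}$ via $J=I+A+\bar A$ yields
$$\bar A^2=(n-1-k)\,I+(n-2k+\lambda)\,A+(n-2k+\mu-2)\,\bar A.$$
Since $A=J-I-\bar A$, this is precisely the strong-regularity relation for $\bar A$ with parameters $(n,\ n-1-k,\ n-2k+\mu-2,\ n-2k+\lambda)$, as claimed.

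There is no serious obstacle here: the whole argument is bookkeeping. The only points worth a word of care are that the relevant Schur products among $I,J,A$ behave exactly as in the classical case (notably that $J$ is the Schur unit and $A\bullet I=0$), and that it is precisely $k$-regularity which turns $AJ$ and $JA$ into $kJ$, so that $(J-I-A)^2$ stays inside $\spanlin\{I,J,A\}$ and the substitution of $A^2$ closes the computation.
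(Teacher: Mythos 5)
Your proof is correct and follows the same route as the paper, which simply sets $B:=J-I-A$ and states the resulting expansion of $B^2$ without showing the intermediate steps; your computation (using $J^2=nJ$, $AJ=JA=kJ$, and the substitution of $A^2$) fills in exactly what the paper leaves implicit, and the extra verification that $J-I-A$ is again a simple quantum graph is a harmless bonus.
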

\begin{proof}
Setting $B:=J-I-A$, we can easily compute
\[B^2=(n-1-k)I+(n-2k+\mu-2)B+(n-2k+\lambda)A.\qedhere\]
\end{proof}

\begin{prop}
\label{P.SRGqiso}
The property of being strongly regular as well as the corresponding parameters are invariant with respect to quantum isomorphisms
\end{prop}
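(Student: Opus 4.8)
The plan is to repeat, almost verbatim, the argument already used for the invariance of distance regularity. By Definition~\ref{D.qiso} and the comment following it, a quantum isomorphism is an isomorphism of monoidal $*$-categories sending $m_1\mapsto m_2$, $\eta_1\mapsto\eta_2$, $A_1\mapsto A_2$; being a $*$-functor compatible with tensor products, it therefore sends every morphism built from $m_1,\eta_1,A_1$ by composition, tensor product, $\dag$ and $*$ to the corresponding morphism built from $m_2,\eta_2,A_2$. In particular it fixes the identity $I$, sends $m_1^\dag\mapsto m_2^\dag$ and hence $J_1=\eta_1\eta_1^\dag\mapsto\eta_2\eta_2^\dag=J_2$, preserves the Schur product $A\bullet B=m(A\otimes B)m^\dag$, the categorical transpose, and the scalar $\eta^\dag\eta$. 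Denote this isomorphism by $\Phi$.

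From here the two claims are immediate. For invariance of the property: $\Phi$ maps $\spanlin\{I,J_1,A_1\}$ bijectively onto $\spanlin\{I,J_2,A_2\}$ and intertwines the compositions, so one of these spans is closed under composition if and only if the other is; for a simple quantum graph this span is automatically closed under $\bullet$ and both involutions, so it is a coherent algebra for $A_1$ exactly when it is one for $A_2$. By the earlier characterization (a simple quantum graph is strongly regular precisely when $\spanlin\{I,J,A\}$ is a coherent algebra), $A_1$ is strongly regular if and only if $A_2$ is.

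For invariance of the parameters: $n=\eta_1^\dag\eta_1=\eta_2^\dag\eta_2$ since $\Phi$ preserves $\eta^\dag\eta$. Applying $\Phi$ to the defining identity $A_1^2=kI+\lambda A_1+\mu(J_1-I-A_1)$ and using $\Phi(A_1^2)=A_2^2$, $\Phi(I)=I$, $\Phi(J_1)=J_2$, $\Phi(A_1)=A_2$ yields $A_2^2=kI+\lambda A_2+\mu(J_2-I-A_2)$ with the very same $k,\lambda,\mu$. Equivalently, one may extract these scalars from $A^2$ by Schur-multiplying with $I$, with $A$, and with $J-I-A$, each of which is preserved by $\Phi$. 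Hence $A_2$ is strongly regular with parameters $(n,k,\lambda,\mu)$.

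I do not expect any real obstacle: everything rests on the single fact, already stressed in the text after Definition~\ref{D.qiso}, that a quantum isomorphism preserves every property expressible through $A$, $m$ and $\eta$, and in particular the coherent algebra generated by $A$. The only minor point worth a sentence of justification is that $J$, the Schur product, the $*$-involution and $\eta^\dag\eta$ all lie in that scope, which is immediate from their definitions in terms of $m$, $m^\dag$, $\eta$ and $\eta^\dag$.
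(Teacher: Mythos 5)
Your proposal is correct and takes essentially the same route as the paper, which simply notes that a quantum isomorphism induces an isomorphism of the coherent algebra (preserving both products, both units, and both involutions); your write-up just spells out the details of why $I$, $J$, the Schur product, and $\eta^\dag\eta$ are preserved and then transports the identity $A^2=kI+\lambda A+\mu(J-I-A)$.
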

\begin{proof}
Again follows from the fact that a quantum isomorphism induces an isomorphism of the coherent algebra.
\end{proof}

The idea behind strongly regular graphs is of course that they are the distance regular graphs with diameter two. This is true apart from a couple of exceptions.

\begin{ex}\label{E.srg}
For any finite quantum space $X$ with $n:=\eta^\dag\eta$, the \emph{complete graph} $K_X$ given by $A=J-I$ is strongly regular with parameters $(n,n-1,n-2,0)$ since $(J-I)^2=(n-2)(J-I)+(n-1)I$. Actually, since $A-J-I=0$, the parameter $\mu$ is undefined, but we can set it to zero. In this case, we see that $\spanlin\{I,A\}$ is a two-dimensional coherent algebra, so the graph is distance regular with diameter one.

Taking any $m\in\N$, we can consider the disjoint union of $m$ copies of $K_X$. That is, take $C(Y)=C(X)\oplus\cdots\oplus C(X)$ and an adjacency matrix $A\colon l^2(Y)\to l^2(Y)$ given by $A=(J_X-I_X)\oplus\cdots\oplus (J_X-I_X)$, where $J_X$ and $I_X$ are the corresponding operators on $l^2(X)$. Such a quantum graph is also strongly regular with parameters $(mn,n-1,n-2,0)$. But in this case the graph is not distance regular at all since the powers of $A$ do not generate $J_Y$. Also note that the graph is not connected.

Another extreme example is the empty graph $A=0$, which is strongly regular with parameters $(n,0,0,0)$. In this case, the $\lambda$ is actually not uniquely defined, but we can put it equal to zero.

Classically, this would exhaust all the examples with $\mu=0$, but there are more in the quantum setting. Take, for instance, the unique 1-regular simple quantum graph on $X=M_2$ (i.e. $C(X)=M_2(\C)$) from the classification in \cite{GQGraph} or \cite{Mat22}. This graph is known to be quantum isomorphic to the classical disjoint union $K_2\sqcup K_2$, so it has to be strongly regular with the same parameters, namely $(4,1,0,0)$.
\end{ex}

\begin{prop}\label{P.mu}
A strongly regular quantum graph is distance regular with diameter two if an only if $\mu\neq 0$ (excluding also $A=J-I$).
\end{prop}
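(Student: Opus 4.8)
The plan is to rewrite $A^2$ in the spanning set $\{I,J,A\}$ and read off the role of the parameter $\mu$. By the definition of strong regularity,
\[
A^2 = kI + \lambda A + \mu(J-I-A) = (k-\mu)I + (\lambda-\mu)A + \mu J,
\]
so in every case $\spanlin\{I,A,A^2\}\subseteq\spanlin\{I,J,A\}$, and the latter is a coherent algebra by the preceding proposition. Hence the whole statement amounts to deciding when this inclusion is an equality of three-dimensional spaces, and the displayed identity shows that $J$ re-enters $\spanlin\{I,A,A^2\}$ precisely when $\mu\neq0$.

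For the forward implication, suppose the graph is distance regular with diameter two, i.e.\ $\spanlin\{I,A,A^2\}$ is a three-dimensional coherent algebra. If $\mu=0$ then $A^2=kI+\lambda A$, so $\spanlin\{I,A,A^2\}\subseteq\spanlin\{I,A\}$ has dimension at most two, a contradiction; thus $\mu\neq0$. Likewise, if $A=J-I$, then $A^2=(J-I)^2=(n-1)I+(n-2)A$ (as in Example~\ref{E.srg}), again forcing dimension at most two; thus $A\neq J-I$.

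For the converse, assume $\mu\neq0$ and $A\neq J-I$. First I would note that $A\neq0$: if $A=0$, the displayed identity collapses to $0=(k-\mu)I+\mu J$, and since $A\neq J-I$ rules out the one-dimensional quantum space (where $I=J$ and the only simple graph is $A=0=J-I$), the operators $I$ and $J$ are linearly independent, forcing $\mu=0$, a contradiction. Next, since $\mu\neq0$ the identity lets us solve $J=\mu^{-1}\bigl(A^2-(k-\mu)I-(\lambda-\mu)A\bigr)\in\spanlin\{I,A,A^2\}$, so $\spanlin\{I,A,A^2\}=\spanlin\{I,J,A\}$. It then remains to check that this space is three-dimensional, i.e.\ that $I,J,A$ are linearly independent. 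A short computation using $A\bullet I=0$ and $A\bullet A=A$ (together with $I\bullet I=I$, $J\bullet I=I$, $J\bullet J=J$) shows that any linear relation among $I,J,A$ forces $A=c(J-I)$ with $c^2=c$, hence $A\in\{0,\,J-I\}$, which we have excluded. Therefore $\spanlin\{I,A,A^2\}$ is a three-dimensional coherent algebra, i.e.\ the graph is distance regular with diameter two.

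All the computations here are routine; the only point requiring a little care is the linear-independence lemma for $I,J,A$ and the bookkeeping of the degenerate low-dimensional cases ($A=0$ and the one-dimensional quantum space), which is exactly why the statement carries the explicit exclusion of $A=J-I$.
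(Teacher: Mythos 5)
Your proof is correct and follows essentially the same route as the paper's: both directions hinge on reading off the coefficient of $J$ in $A^2=(k-\mu)I+(\lambda-\mu)A+\mu J$, and the converse rests on the observation that a Schur projection lying in $\spanlin\{I,J\}$ must be $0$ or $J-I$. You simply spell out the linear-independence check and the degenerate cases ($A=0$, the one-dimensional space) that the paper leaves implicit.
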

\begin{proof}
Denote by $A$ the adjacency matrix. If the graph is supposed to be distance regular with diameter two, it means that $\spanlin\{I,J,A\}=\spanlin\{I,A,A^2\}$ is three-dimensional. Therefore, $J$ is not a linear combination of $I$ and $A$ and hence also $A^2$ must be a linear combination of $I$, $J$, and $A$ with a non-trivial coefficient at $J$. Thus, $\mu\neq0$.

For the converse, we claim that $A$ is not a linear combination of $I$ and $J$. If it was, the only possibilities would be $A=J-I$ and $A=0$ as $A$ must be a Schur projection. So, $\spanlin\{I,J,A\}$ must be three-dimensional. If, in addition, $\mu\neq0$, then $\spanlin\{I,A,A^2\}=\spanlin\{I,J,A\}$.
\end{proof}

\begin{prop}
A strongly regular quantum graph has $\mu=0$ if and only if it is disconnected or it is a complete graph $A=J-I$.
\end{prop}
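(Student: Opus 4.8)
The plan is to obtain the implication ``disconnected or $A=J-I$ $\Rightarrow$ $\mu=0$'' essentially for free from the results already proved, and to establish the converse by a short spectral computation. One has to be a little careful with three degenerate configurations --- $A=J-I$, $A=0$, and $X$ a single point --- which partly overlap and are governed by conventions.

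For the easy direction: if $A=J-I$ then $\mu=0$ by the convention adopted in Example~\ref{E.srg}, and this graph is connected (being distance regular of diameter one, cf.\ Example~\ref{E.srg} and Proposition~\ref{P.drgConn}). If instead the graph is disconnected, then it is not distance regular of diameter two (Proposition~\ref{P.drgConn}), so Proposition~\ref{P.mu} leaves only the excluded cases $\mu=0$ or $A=J-I$; since the latter is connected, disconnectedness forces $\mu=0$.

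For the converse, assume $\mu=0$, i.e.\ $A^2=kI+\lambda A$. First I would observe that the degree is nonnegative: $kn=\eta^\dag A\eta=\eta^\dag(A\bullet A)\eta=\CTr(A^\dag A)\ge 0$ by Eq.~\eqref{eq.TrSchur} (using $A=A^*=A^\dag=A\Ctrans$ and $\eta^\dag\eta=n$), with equality only for $A=0$. If $k=0$ then $A=0$, and either $X$ is a point, so $J=I$ and $A=0=J-I$, or $\dim l^2(X)\ge 2$, in which case $0=k$ is not a simple eigenvalue and the graph is disconnected. If $k>0$, evaluating $A^2=kI+\lambda A$ at $\eta$ gives $k^2=(1+\lambda)k$, so $\lambda=k-1$ and $(A-kI)(A+I)=0$; since $A$ is self-adjoint it is diagonalizable with spectrum in $\{k,-1\}$, and $k\neq-1$. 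If the graph is connected, then $k$ is a simple eigenvalue with eigenspace $\spanlin\{\eta\}$, so its spectral projection is $\tfrac1nJ$ and $A=\tfrac knJ-(I-\tfrac1nJ)=\tfrac{k+1}{n}J-I$. Schur-multiplying by $I$ and using $J\bullet I=I$ together with the no-loops condition $A\bullet I=0$ forces $\tfrac{k+1}{n}=1$, i.e.\ $k=n-1$ and $A=J-I$. Hence $\mu=0$ implies the graph is disconnected or $A=J-I$.

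I expect the only real obstacle to be organizational: keeping the exceptional cases $A=J-I$, $A=0$ and $X=\{\mathrm{pt}\}$ straight so the conventions do not clash, and invoking faithfulness of the categorical trace --- which coincides with the ordinary operator trace in the symmetric case, as noted after Eq.~\eqref{eq.TrSchur} --- to conclude $A=0$ from $\CTr(A^\dag A)=0$. The genuinely computational part, the spectral decomposition of $A$ in the connected case, is one line.
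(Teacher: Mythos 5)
Your proof is correct and follows essentially the same route as the paper's: the easy direction via Propositions~\ref{P.drgConn} and \ref{P.mu}, and the converse by using self-adjointness plus connectedness to identify $\tfrac1nJ$ as the spectral projection for the eigenvalue $k$, so that $\mu=0$ forces $A\in\spanlin\{I,J\}$ and hence $A=J-I$. Your version is in fact slightly more complete: the paper's converse ends by citing the Schur-projection argument from the proof of Prop.~\ref{P.mu}, which strictly speaking leaves both possibilities $A=J-I$ and $A=0$, whereas your explicit case split on $k=0$ (using faithfulness of the trace and the observation that $0$ is not a simple eigenvalue when $\dim l^2(X)\ge 2$) disposes of the degenerate case cleanly.
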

\begin{proof}
If the quantum graph is disconnected, then by Prop.~\ref{P.drgConn} it cannot be distance regular and by Prop.~\ref{P.mu} has $\mu=0$. The complete graph has $\mu=0$ by Example~\ref{E.srg}. For the converse, assume that the strongly regular quantum graph has $\mu=0$ and it is connected. Note that $A=A^\dag$, so it is diagonalizable and hence the projection $\frac{1}{n}J$ on the eigenspace $\spanlin\{\eta\}$ must be a polynomial in $A$. Since $\mu=0$, it follows that actually $J$ is a linear combination of $I$ and $A$. As we mentioned already in the proof of Prop.~\ref{P.mu}, the only possibility is that $A=J-I$.
\end{proof}

So far, we were only able to construct strongly regular quantum graphs with $\mu=0$. One way to obtain an example with $\mu\neq 0$ is to construct the complement of a strongly regular graph with $\mu=0$. Such a quantum graph will naturally arise in Example~\ref{E.Knn2} by certain duality construction.

In Section~\ref{secc.Latin}, we are going to construct strongly regular graphs corresponding to certain quantum Latin squares. These have $\mu=6$. Below, we bring another example of a strongly regular quantum graph with $\mu\neq 0$:

\begin{ex}[Quantum rook's graph {\cite[Section~6.3]{GQGraph}}]
\label{E.rook}
Consider the quantum space $M_n$ defined by $C(M_n)=M_n(\C)$. Consider the adjacency matrix $A\colon M_n(\C)\to M_n(\C)$ defined by
$$e_{ij}\mapsto n\delta_{ij}e_{ii}-2e_{ij}+ \sum_{k} e_{i+k,i+j},$$
where the indices are taken mod $n$.

In \cite{GQGraph}, it was proven that this defines a simple quantum graph, which is quantum isomorphic to the classical $n\times n$ rook's graph. Classical $n\times n$ rook's graph is strongly regular with parameters $(n^2,2n-2,n-2,2)$. Consequently, the given quantum graph must be also strongly regular with the same parameters.
\end{ex}

As an open problem, we leave the task of finding a strongly regular quantum graph, which is not quantum isomorphic to any classical one.\footnote{Subsequent to submission of this paper, the problem was solved in~\cite{BHV26}.}

Finally, we can discuss eigenvalues of strongly regular quantum graphs. The situation here can actually be copied from the classical case. The adjacency matrix $A$ of a strongly regular quantum graph with parameters $(n,k,\lambda,\mu)$ has the eigenvector $\eta$ corresponding to the eigenvalue $k$. Consider some other eigenvector $x$ and denote by $p$ the corresponding eigenvalue. We surely have $Jx=0$ since $J$ is (up to normalization) the orthogonal projection on $\eta$. Consequently, the equation $A^2=kI+\lambda A+\mu (J-I-A)$ leads to $p^2x=kx+\lambda px-\mu(1+p)x$. Hence, $p$ satisfies
\begin{equation}\label{eq.SRGeigenvalue}
p^2+(\mu-\lambda)p+(\mu-k)=0.
\end{equation}
Assuming that $A$ defines a distance regular quantum graph of diameter two (i.e. $\mu\neq 0$ and $A\neq J-I$), so that $\{I,A,J-I-A\}$ is a 2-class CCQAS, it means that its eigenvalue matrix is given by
\begin{equation}\label{eq.SRGeigenmatrix}
P=\begin{pmatrix}
1&k&n-1-k\\
1&r&-1-r\\
1&s&-1-s
\end{pmatrix}
\end{equation}
where $r$ and $s$ are solutions of Equation~\eqref{eq.SRGeigenvalue}. (The first column contains eigenvalues of $I$, the second column eigenvalues of $A$, and the third one corresponds to $J-I-A$.)

\section{Translation quantum association schemes}
\label{sec.TQAS}

\subsection{Background in Hopf algebras}
In this section, we define finite quantum groups and recall the duality principle in this context.

\begin{defn}
A \emph{Hopf $*$-algebra} is a $*$-algebra $\Alg$ equipped with
\begin{itemize}
\item a $*$-homomorphism $\Delta\colon\Alg\to\Alg\otimes\Alg$ called the \emph{comultiplication}, which is supposed to be \emph{coassociative}, i.e. $(\Delta\otimes\id)\Delta=(\id\otimes\Delta)\Delta$,
\item a $*$-homomorphism $\epsilon\colon\Alg\to\C$ called the \emph{counit}, which satisfies $(\epsilon\otimes\id)\Delta=\id=(\id\otimes\epsilon)\Delta$,
\item a linear map $S\colon\Alg\to\Alg$ called the \emph{antipode}, which satisfies $m(S\otimes\id)\Delta=\eta\epsilon=m(\id\otimes S)\Delta$, where $m$ is the multiplication and $\eta$ is the unit of the algebra.
\end{itemize}
\end{defn}

A functional $\psi\colon\Alg\to\C$ on a Hopf $*$-algebra $\Alg$ is called \emph{left-invariant} if $(\psi\otimes\id)(\Delta(f))=\psi(f)\eta$. A positive left-invariant functional is called the \emph{Haar functional}. For finite-dimensional Hopf $*$-algebras, it always exists, it is given, up to normalization, uniquely, and it is always tracial \cite{VDa97}. As follows from \cite{Was23}, the Haar functional can be normalized\footnote{The normalization of the Haar functional is often chosen such that $\psi(\eta)=1$. This is, however, not compatible with the condition $mm^\dag=\id$ as we know that $\eta^\dag\eta=\dim\Alg$ in symmetric Frobenius $*$-algebras.} in such a way that it makes $\Alg$ a special symmetric Frobenius $*$-algebra. As in the preceding sections, we will denote the Haar functional defining the Frobenius structure on a Hopf $*$-algebra simply by $\eta^\dag$.

\begin{ex}
Suppose $\Gamma$ is a finite group. Then the $*$-algebra of all functions $C(\Gamma)=\{f\colon\Gamma\to\C\}$ has a structure of a Hopf $*$-algebra given by $\Delta(f)(x,y)=f(xy)$, $\epsilon(f)=f(e)$, $S(f)(x)=f(x^{-1})$. The Haar functional corresponds to the classical Haar measure, which, in the finite case, is the counting measure $\psi(f)=\sum_{x\in\Gamma}f(x)$.
\end{ex}

\begin{ex}
Suppose $\Gamma$ is a finite group. Then the group $*$-algebra $\C\Gamma$ generated by elements $\lambda_g$, $g\in\Gamma$ subject to relations $\lambda_g\lambda_h=\lambda_{gh}$, $\lambda_g^*=\lambda_{g^{-1}}$ is a Hopf $*$-algebra with respect to $\Delta(\lambda_g)=\lambda_g\otimes\lambda_g$, $\epsilon(\lambda_g)=1$, $S(\lambda_g)=\lambda_{g^{-1}}$. The Haar functional acts by $\psi(\lambda_g)=\delta_{eg}$.
\end{ex}

Since Hopf $*$-algebras are special symmetric Frobenius $*$-algebras, we can treat them as finite quantum spaces.

\begin{defn}
A \emph{finite quantum group} is a finite-dimensional Hopf $*$-algebra. We use the same terminology and notation as for finite quantum spaces. If we denote the finite quantum group by $\Gamma$, we use $C(\Gamma)$ to denote the underlying Hopf $*$-algebra and $l^2(\Gamma)$ the associated Hilbert space.
\end{defn}

\begin{lem}
Let $\Alg$ be a finite-dimensional Hopf $*$-algebra. Then the antipode satisfies the following properties.
\begin{enumerate}
\item It is $*$-preserving, self-adjoint and involutive: $S=S^*=S^\dag=S^{-1}$.
\item It is an algebra and coalgebra antihomomorphism: Denoting $\Sigma\colon x\otimes y\mapsto y\otimes x$, then $S\circ m=m\circ\Sigma\circ(S\otimes S)$ (i.e. $S(fg)=S(g)S(f)$) and $\Delta\circ S=(S\otimes S)\circ\Sigma\circ\Delta$.
\item It is unital and counital: $S\eta=\eta$, $\epsilon S=\epsilon$.
\end{enumerate}
\end{lem}
\begin{proof}
The fact that $S^2=\id$ follows from \cite{VDa97}. For the rest, see \cite{Tim08}.
\end{proof}

The inner product defined by the Haar functional allows us to construct a dual structure on a Hopf $*$-algebra exchanging the multiplication and comultiplication. We can define the multiplication $\Delta^\dag$ with unit $\epsilon^\dag$, the comultiplication $m^\dag$ with counit $\eta^\dag=\psi$. We can take the antipode $S=S^\dag$ and finally, to make it a Hopf $*$-algebra, we have to alter the involution a bit and define $f^\star=S f^*$. One can show that this is indeed a Hopf $\star$-algebra. We will call it the \emph{passive dual}.

\begin{ex}
\label{E.CGCG}
If $\Gamma$ is an ordinary finite group, then we can view any formal linear combination of elements in $\Gamma$ as a function on $\Gamma$ and vice versa. Hence, $C(\Gamma)$ can be identified with $\C\Gamma$ as vector spaces. We can denote the elements simply by $e_g$, $g\in\Gamma$. Taking this viewpoint, one can check that the Hopf $*$-algebra $\C\Gamma$ is the passive dual of $C(\Gamma)$ and vice versa.
\end{ex}

\begin{defn}
For any finite quantum group $\Gamma$, we denote by $\C\Gamma$ the passive dual of $C(\Gamma)$.
\end{defn}

The above definition is actually somewhat non-standard, so we will try to limit the usage of the notation $\C\Gamma$ to avoid confusions. In the literature, the following \emph{active} construction is more common:

\begin{prop}
Let $\Gamma$ be a finite-dimensional Hopf $*$-algebra. Then the dual vector space $\Alg^*$ is a Hopf $\star$-algebra with respect to the following:
\begin{itemize}
\item multiplication $(\phi,\psi)\mapsto\phi*\psi:=(\phi\otimes\psi)\circ\Delta$,
\item comultiplication $\phi\mapsto\phi\circ m$,
\item unit $\epsilon$,
\item counit $\phi\mapsto\phi(\eta)$
\item involution $\phi\mapsto\phi^\star:=\phi^*\circ S$, i.e. $\phi^\star(f)=\overline{\phi(S(f)^*)}$
\item antipode $\phi\mapsto\phi\circ S$.
\end{itemize}
\end{prop}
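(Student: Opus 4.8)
The plan is to build the Hopf $\star$-algebra structure on $\Alg^*$ (where $\Alg := C(\Gamma)$ denotes the given Hopf $*$-algebra) by transposing every structure map and every axiom of $\Alg$, using the canonical identification $(\Alg\otimes\Alg)^*\cong\Alg^*\otimes\Alg^*$, available precisely because $\dim\Alg<\infty$. Concretely, the product $\phi*\psi=(\phi\otimes\psi)\circ\Delta$ is the transpose of $\Delta$, so its associativity is exactly the coassociativity of $\Delta$; dually the coproduct $\phi\mapsto\phi\circ m$ is the transpose of $m$, with coassociativity coming from associativity of $m$. The unit $\epsilon$ and the counit $\phi\mapsto\phi(\eta)$ of $\Alg^*$ inherit their axioms by transposing the counit axioms of $\epsilon$ and the unit axioms of $\eta$. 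The bialgebra compatibility, i.e.\ that $\phi\mapsto\phi\circ m$ is an algebra homomorphism, is the transpose of the fact that $\Delta\colon\Alg\to\Alg\otimes\Alg$ is an algebra homomorphism, which holds since $\Delta$ is even a $*$-homomorphism. Finally, to see that $\phi\mapsto\phi\circ S$ is an antipode for $\Alg^*$ one transposes the two identities $m(S\otimes\id)\Delta=\eta\epsilon=m(\id\otimes S)\Delta$: the relevant convolution, evaluated on $g\in\Alg$, collapses by the defining property of $\phi\circ m$ to $\phi\bigl(m(S\otimes\id)\Delta(g)\bigr)=\epsilon(g)\,\phi(\eta)$, exactly as required.

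The substantive part is the involution $\phi^\star:=\phi^*\circ S$, i.e.\ $\phi^\star(f)=\overline{\phi(S(f)^*)}$, where the properties of $S$ from the preceding lemma come in. Antilinearity is clear. For involutivity $\phi^{\star\star}=\phi$ it suffices to check the pointwise identity $S\bigl(S(f)^*\bigr)^*=f$, which follows from $S$ being $*$-linear (so $S(g^*)=S(g)^*$) together with $S^2=\id$. For antimultiplicativity I would compute $(\phi*\psi)^\star(f)=\overline{(\phi\otimes\psi)\bigl(\Delta(S(f)^*)\bigr)}$ and rewrite $\Delta(S(f)^*)=\Delta(S(f))^*=\bigl((S\otimes S)\Sigma\Delta(f)\bigr)^*$, using that $\Delta$ is a $*$-homomorphism and that $\Delta\circ S=(S\otimes S)\circ\Sigma\circ\Delta$; collecting the Sweedler components then gives $(\phi*\psi)^\star=\psi^\star*\phi^\star$. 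The flip $\Sigma$ in $\Delta\circ S=(S\otimes S)\circ\Sigma\circ\Delta$ is precisely what produces the order reversal here; note that the naive candidate $\phi\mapsto\phi^*$ (with $\phi^*(f)=\overline{\phi(f^*)}$) would come out multiplicative rather than antimultiplicative, so the antipode twist is genuinely needed. The remaining checks are that $\phi\mapsto\phi\circ m$ is $\star$-preserving (because $S$ is an algebra antihomomorphism and $m$ is $*$-preserving) and that $\phi\mapsto\phi(\eta)$ is $\star$-preserving (because $S\eta=\eta$ and $\eta^*=\eta$).

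An essentially equivalent but more transparent route would be to use the nondegenerate Frobenius bilinear form $\beta$ to identify $\Alg^*$ with $\Alg$ as a vector space; under this identification the active dual above corresponds term by term to the passive dual $(\Delta^\dag,\epsilon^\dag,m^\dag,\eta^\dag,S,\ f\mapsto Sf^*)$ introduced just before the proposition (here $\dag$ may be replaced by the categorical transpose, all structure maps being $*$-preserving). Then the statement becomes equivalent to the claim that the passive dual is a Hopf $\star$-algebra, and either way the content is the same transposition bookkeeping.

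The hard part is exactly that bookkeeping: keeping the flip $\Sigma$ and the order of tensor legs straight when dualizing the bialgebra and antipode axioms, and correctly combining the $*$-homomorphism property of $\Delta$ with the coalgebra-antihomomorphism identity $\Delta S=(S\otimes S)\Sigma\Delta$ and with $S^2=\id$ so as to see that $\star$ is an antilinear antimultiplicative involution preserved by the coproduct and the counit. No single step is deep; all the facts about $S$ that are needed have been recorded in the preceding lemma.
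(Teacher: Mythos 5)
Your proof is correct, and it is exactly the standard transposition argument the paper relies on: the paper states this proposition without proof, having just constructed the passive dual and remarked that the two are identified by transposition, which is precisely the bookkeeping you carry out (including the correct use of $S^2=\id$, $S=S^*$, and $\Delta S=(S\otimes S)\Sigma\Delta$ for the involution). Nothing is missing.
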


The two approaches of defining the Hopf dual are equivalent differing just by taking the adjoint. (As a matter of fact, the adjoint operation is antilinear, not linear. So, the two constructions actually do not produce isomorphic Hopf $*$-algebras in general. In fact, they differ by the order of the multiplication.)


\begin{nota}
\label{N.prods}
In the following text, we will prefer using the ``neutral'' notation $l^2(\Gamma)$ with no preferred product instead of $C(\Gamma)$ or $\C\Gamma$. We will denote by $m$ and $\Delta$ the product and coproduct of $C(\Gamma)$. For $x,y\in l^2(\Gamma)$ we will denote
\begin{itemize}
\item $x\bullet y=m(x\otimes y)$,
\item $x*y=\Delta^{\dag}(x\otimes y)$,
\item $x^*$ as in $C(\Gamma)$,
\item $x^\star=S(x^*)$.
\end{itemize}
A similar notation can be used also for functionals $l^2(\Gamma)\to\C$. Note also that we have the unit $\eta$ with respect to the product $\bullet$ and the unit $\epsilon^\dag$ with repsect to the product $*$. Hence, the tuple $(l^2(\Gamma),\bullet,*,{}^*,{}^\star)$ looks like a coherent algebra except that the elements are just abstract; they are not realized as linear operators on some $l^2(X)$. In Proposition~\ref{P.GHomog} we will prove that this ``coherent algebra'' is also homogeneous:
\end{nota}

\begin{ex}\label{E.GammaOp}
For a finite group $\Gamma$, denote by $\{e_g\}_{g\in\Gamma}$ the canonical basis of $l^2(\Gamma)$. Then
\begin{itemize}
\item $e_g\bullet e_h=\delta_{gh}e_g$ is the pointwise product,
\item $e_g*e_h=e_{gh}$ is the convolution,
\item $e_g^*=e_g$,
\item $e_g^\star=e_{g^{-1}}$.
\end{itemize}
We have the units $\eta=\sum_{g\in\Gamma}e_g$ (i.e.\ the all-one-vector) and $\epsilon^\dag=e_e$.
\end{ex}

Since the dual of $\C\Gamma$ is again a finite-dimensional Hopf algebra, it defines a new finite quantum group. We introduce the following terminology:

\begin{defn}
\label{D.dualG}
Let $\Gamma$ be a finite quantum group. We will say that $\hat\Gamma$ is \emph{dual} to $\Gamma$ if the Hopf $*$-algebra $C(\hat\Gamma)$ is isomorphic to the passive dual of the Hopf $*$-algebra $C(\Gamma)$. We will denote by $\Phi\colon l^2(\Gamma)\to l^2(\hat\Gamma)$ the canonical vector space isomorphism and call it the \emph{Fourier transform}.\footnote{For any finite quantum group $\Gamma$, we can \emph{construct} the dual $\hat\Gamma$ simply by taking the Hopf $*$-algebra $C(\hat\Gamma)$ to be equal to the passive dual of the Hopf $*$-algebra $C(\Gamma)$. Then $\l^2(\hat\Gamma)=l^2(\Gamma)$ and $\Phi$ is simply the identity.} We will extend it to a monoidal (not unitary) functor acting on the category of all tensors $A\colon l^2(\Gamma)^{\otimes k}\to l^2(\Gamma)^{\otimes l}$ by
$$\hat A=\Phi^{\otimes l}A\Phi^{-1\,\otimes k}\colon l^2(\hat\Gamma)^{\otimes k}\to l^2(\hat\Gamma)^{\otimes l}.$$
\end{defn}

In particular, we denote $\hat x=\Phi(x)$ for $x\in l^2(\Gamma)$. 
Also $\widehat{\epsilon^\dag}$ is the unit of $C(\hat\Gamma)$, $\widehat{\eta^\dag}$ is the counit, $\hat S$ is the antipode, $\widehat{\Delta^\dag}$ is the multiplication, and $\widehat{m^\dag}$ is the comultiplication. We will use Notation~\ref{N.prods} also for the dual. That is,
\begin{itemize}
\item $\hat x\bullet\hat y:=\widehat{\Delta^\dag}(\hat x\otimes\hat y)=\widehat{\Delta^\dag(x\otimes y)}=\widehat{x*y}$
\item $\hat x*\hat y:=\widehat{m^\dag}^\dag(\hat x\otimes\hat y)$
\item $\hat x^*:=\widehat{x^\star}$
\item $\hat x^\star:=\hat S\hat x^*=\hat S\widehat{x^\star}=\widehat{Sx^\star}=\widehat{x^*}$
\end{itemize}

At the moment, it is not obvious how to simplify the second definition and relate it with the operations in the original Hopf $*$-algebra. This is because, it is not clear whether the inner product and hence the adjoint operation in the dual coincides with the adjoint in the original algebra. We devote the rest of this section to explaining this relationship.


Recall the diagrammatic calculus that is used for Frobenius $*$-algebras or quantum spaces. The associativity of $m$, the unit $\eta$, the snake equation \eqref{eq.snake}, the ``special'' condition $mm^\dag=\id$, or the Frobenius law \eqref{eq.Flaw} mean that computing with these morphisms diagrammatically is extremely simple.

In Hopf algebras, we have the dual structure as well. We have a comultiplication $\Delta$, for which we can introduce the diagram $\wspider{1/2}$; we have the counit $\epsilon$, for which we can introduce the diagram $\wspider{1/0}$. Now, the coassociativity and counit laws mean that
$\Diagram{\Dmor{circ}1/2 (1,0) \Dmor{circ}1/2 (1.5,1) \draw (0.5,0.5) -- (0.5,1.5);}=
 \Diagram{\Dmor{circ}1/2 (2,0) \Dmor{circ}1/2 (1.5,1) \draw (2.5,0.5) -- (2.5,1.5);}$
and
$\Diagram{\Dmor{circ}1/2 (1,.5) \Dmor{circ}0/0 (1.5,1)}=\Did=
 \Diagram{\Dmor{circ}1/2 (1,.5) \Dmor{circ}0/0 (0.5,1)}$.

We can also introduce $\wspider{2/1}:=\Delta^\dag$ and $\wspider{1/0}:=\epsilon^\dag$. However, it is already not clear, whether all the laws we mentioned above (like the special condition or the Frobenius law) have also a ``white'' counterpart. But since the dual of a Hopf $*$-algebra is a Hopf $*$-algebra again, it must also define a dual quantum space, where all these laws should again hold. The only subtlety is that we defined $\wspider{2/1}$ and $\wspider{1/0}$ using the dagger $\dag$ in the original Hopf algebra, not in the dual. So, again, the only thing which remains unclear is whether the inner product in the dual coincides with the inner product in the original Hopf algebra. It turns out that this is true up to a multiplicative constant:

\begin{prop}\label{P.dagdual}
Let $\Gamma$ be a finite quantum group and $\hat\Gamma$ its dual. Then, for any $A\colon l^2(\Gamma)^{\otimes k}\to l^2(\Gamma)^{\otimes l}$, we have
$$\widehat{A^\dag}=n^{k-l}\hat A^\dag,\quad\widehat{A\Ctrans}=n^{k-l}\hat S^{\otimes l}\hat A\Ctrans\hat S^{\otimes k},\quad\widehat{A^*}=\hat S^{\otimes l}\hat A^*\hat S^{\otimes k}.$$
\end{prop}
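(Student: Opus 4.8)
The plan is to reduce all three formulas to two comparisons of structure — one for the involution, one for the Hilbert‑space adjoint — and then to read off the third formally. First note the reduction: since $A^*=(A^\dag)\Ctrans=(A\Ctrans)^\dag$ and all three operations are (anti)functorial, and since $\hat S=\hat S^\dag=\hat S^{-1}$ is $*$‑linear and involutive by the antipode lemma, once the formulas for $A^*$ and $A^\dag$ are established the formula for $A\Ctrans$ follows by writing $A\Ctrans=(A^*)^\dag$, substituting the two known formulas, and simplifying: the two factors $n^{l-k}$ combine into the single one of the statement, and the redundant $\hat S$'s cancel via $\hat S^2=\id$. So it is enough to treat $A^*$ and $A^\dag$.

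The formula for $A^*$ is essentially the ``spread‑out'' form of the single‑vector identity $\widehat{x^\star}=\hat x^*$, i.e.\ of $\widehat{x^*}=\hat S(\hat x^*)$ (use $x^\star=S(x^*)$, $\hat S=\widehat{S}$, and $\hat S^2=\id$). Feeding this, together with the $*$‑linearity of $\hat S$ and the characterisation $A^*(f)=(A(f^*))^*$ of the involution on operators, first into the case $A\colon l^2(\Gamma)\to l^2(\Gamma)$ and then, by monoidal functoriality of $\widehat{\ \cdot\ }$ and of ${}^*$, into arbitrary tensors, produces $\widehat{A^*}=\hat S^{\otimes l}\hat A^*\hat S^{\otimes k}$; the identity $\hat S^2=\id$ is exactly what makes the right‑hand side compatible with composition, so one only has to verify it on the generating (``black'' and ``white'') structure maps $m,\eta,m^\dag,\eta^\dag,\Delta,\epsilon,\Delta^\dag,\epsilon^\dag,S$, on which it is immediate.

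The content of the formula for $A^\dag$ is that the canonical inner products of $l^2(\Gamma)$ and $l^2(\hat\Gamma)$ are proportional. Under the Fourier identification the multiplication of $C(\hat\Gamma)$ is the map $\Delta^\dag$; from coassociativity, the counit law and the chosen normalisation of the Haar functional ($\eta^\dag\eta=n$) one gets $\Delta^\dag\Delta=n\,\id$ — indeed $\langle\Delta x,\Delta y\rangle=(\psi\otimes\psi)(\Delta(x^*y))=\psi(x^*y)\,\psi(\eta)=n\langle x,y\rangle$ by left‑invariance. Hence the ``special'' condition for $C(\hat\Gamma)$ forces its inner product to be a fixed multiple, determined by $n$, of the push‑forward of the one on $l^2(\Gamma)$ — one power of $n$ per tensor leg. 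Substituting this into the defining relation $\langle\hat A\,\hat x,\hat y\rangle=\langle\hat x,\hat A^\dag\hat y\rangle$ and comparing with $\widehat{A^\dag}=\Phi^{\otimes k}A^\dag\Phi^{-\otimes l}$, the $k$ legs of the domain and the $l$ legs of the codomain contribute with opposite signs, which yields the factor $n^{l-k}$ and hence $\widehat{A^\dag}=n^{l-k}\hat A^\dag$.

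The main obstacle is precisely this last computational step: to identify the Haar/Frobenius functional — and therefore the inner product and, for $A\Ctrans$, the bilinear form — of $C(\hat\Gamma)$ explicitly in terms of the data of $C(\Gamma)$ (the antipode enters here, since the bilinear form of $C(\hat\Gamma)$ equals $n$ times $\beta\circ(\hat S\otimes\id)=n\,\beta\circ(\id\otimes\hat S)$), and then to keep scrupulous leg‑by‑leg track of the powers of $n$ and of the insertions of $\hat S$. This bookkeeping is most transparent diagrammatically: every tensor is a composite of the structure maps listed above, the map $\Phi$ merely interchanges the black and white maps, and one verifies the three identities on those generators and extends by monoidal functoriality.
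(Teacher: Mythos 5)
Your proposal is correct in substance and rests on the same two pillars as the paper's proof — the left‑invariance of $\epsilon$ for the dual comultiplication and the identity $\Delta^\dag\Delta=n\,\id$, which together compare the Frobenius structures of $C(\Gamma)$ and $C(\hat\Gamma)$ — but you decompose the three formulas differently. The paper proves the $\dag$‑formula (inner‑product comparison) and the $\Ctrans$‑formula (comparison of bilinear forms, which needs the Larson–Sweedler‑type identity $\epsilon(x*y)=\eta^\dag(S(x)y)$ to see that the dual pairing is the original one twisted by the antipode), and then obtains the $*$‑formula by composing the two. You instead prove $\dag$ and $*$ and deduce $\widehat{A\Ctrans}$ from $A\Ctrans=(A^*)^\dag$ using $\hat S=\hat S^\dag$, $\hat S^2=\id$. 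This buys something real: your $*$‑formula needs only the relation $\widehat{x^*}=\hat S(\hat x^{*})$ between the two vector involutions, which is immediate from the definition $x^\star=S(x^*)$, so the cited bilinear‑form lemma is never needed and the antipode insertions come out of the involution rather than out of the pairing. Your direct computation $\langle\Delta x,\Delta y\rangle=(\psi\otimes\psi)(\Delta(x^*y))=n\langle x,y\rangle$ is also a cleaner proof of $\Delta^\dag\Delta=n\,\id$ than the paper's (which first invokes normalizability of the dual Haar functional and then evaluates the constant on $\eta$).

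Two cautions. First, the one step you leave schematic — converting $\Delta^\dag\Delta=n\,\id$ into the precise proportionality constant $c$ with $\langle\cdot,\cdot\rangle_{\hat\Gamma}=c\,\langle\cdot,\cdot\rangle_{\Gamma}$ per leg — is exactly where the sign of the exponent in $n^{l-k}$ is decided; ``a fixed multiple determined by $n$'' does not certify it. Do the computation: the adjoint of the dual multiplication $\widehat{\Delta^\dag}$ with respect to the rescaled inner product is $c^{-1}\hat\Delta$, so the special condition reads $c^{-1}\Delta^\dag\Delta=\id$ and fixes $c$; only then does the leg count produce a power of $n$ with a definite sign. Since this is the only quantitative content of the proposition, it must not be deferred to ``bookkeeping.'' Second, when you verify the $*$‑formula on generators and extend by functoriality, remember that the involution on $l^2(\Gamma)^{\otimes k}$ reverses the order of the legs and that the $*$‑operation on maps over $\hat\Gamma$ is built from the $\star$‑involution; the verification on $m$, for instance, is not ``immediate'' but uses that $S$ is an algebra antihomomorphism. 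These are repairable details, not flaws in the strategy.
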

We will prove the statement by a series of lemmata. In all the statements, we assume that $\Alg$ is a finite-dimensional Hopf $*$-algebra.
\begin{lem}
\label{L.epsilon}
For any $x\in\Alg$,
$$(\epsilon\otimes\id)m^\dag(x)=\epsilon(x)\epsilon^\dag$$
Consequently, $\epsilon$ is left-invariant in the passive dual.
\end{lem}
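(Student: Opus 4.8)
The plan is to prove the displayed identity; the ``consequently'' clause is then immediate, since in the passive dual the comultiplication is $m^\dag$ and the unit is $\epsilon^\dag$, so the equation $(\epsilon\otimes\id)m^\dag(x)=\epsilon(x)\epsilon^\dag$ is exactly the defining condition of left-invariance for the functional $\epsilon$. For the identity itself, I would abbreviate $c:=\epsilon^\dag(1)\in\Alg$, where $\epsilon^\dag\colon\C\to\Alg$ is the adjoint of the counit with respect to the inner product $\langle a,b\rangle=\psi(a^*b)$ coming from the Haar functional $\psi=\eta^\dag$. By definition of the adjoint, $c$ is characterised by $\psi(c^*a)=\langle c,a\rangle=\epsilon(a)$ for every $a\in\Alg$, and since $\epsilon$ is a $*$-homomorphism, taking complex conjugates also gives $\psi(ac)=\epsilon(a)$ (i.e.\ $c$ is a two-sided integral of $\Alg$, though we shall not phrase it that way). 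As both $(\epsilon\otimes\id)m^\dag(x)$ and $\epsilon(x)\epsilon^\dag$ are vectors of $\Alg$, it is enough to check that they have the same inner product with an arbitrary $y\in\Alg$.

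Then I would just compute the pairing. Using that adjoints distribute over tensor products, so that $(\epsilon\otimes\id)^\dag=\epsilon^\dag\otimes\id$ sends $y$ to $c\otimes y$, and that $(m^\dag)^\dag=m$, one gets
$$\langle y,(\epsilon\otimes\id)m^\dag(x)\rangle=\langle c\otimes y,m^\dag(x)\rangle=\langle m(c\otimes y),x\rangle=\langle cy,x\rangle=\psi\big(y^*c^*x\big).$$
Now traciality of $\psi$ rewrites this as $\psi\big(c^*(xy^*)\big)$, and then the identity $\psi(c^*\,\cdot\,)=\epsilon$ together with multiplicativity of $\epsilon$ turns it into $\epsilon(xy^*)=\epsilon(x)\overline{\epsilon(y)}$. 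On the other hand $\langle y,\epsilon(x)\epsilon^\dag\rangle=\epsilon(x)\langle y,c\rangle=\epsilon(x)\overline{\langle c,y\rangle}=\epsilon(x)\overline{\epsilon(y)}$, so the two sides agree, which proves the claim.

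I do not expect a genuine obstacle here. The only points that need care are the conjugate-linearity of the inner product in its first argument (which is why $\overline{\epsilon(y)}=\epsilon(y^*)$ appears), the equality $(\epsilon\otimes\id)^\dag=\epsilon^\dag\otimes\id$, and the traciality of the Haar functional. Conceptually the statement is just the classical fact that the counit of a finite-dimensional Hopf $*$-algebra is an invariant integral on the dual; diagrammatically it is the bending of a single ``white'' wire, but since that calculus is only being justified in the paragraphs that follow, the bare-hands argument above is the cleaner route.
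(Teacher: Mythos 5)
Your argument is correct. The defining property $\psi(c^*a)=\epsilon(a)$ of $c=\epsilon^\dag(1)$, the identity $(\epsilon\otimes\id)^\dag=\epsilon^\dag\otimes\id$, the adjunction $\langle c\otimes y,m^\dag(x)\rangle=\langle cy,x\rangle$, traciality of $\psi$, and multiplicativity of $\epsilon$ are all legitimately available here, and non-degeneracy of the inner product then closes the argument. The route is genuinely different in presentation from the paper's: the paper proves the identity by a direct diagrammatic manipulation, using the Frobenius law \eqref{eq.Flaw} to rewrite $m^\dag(x)$ as $(m\otimes\id)(x\otimes m^\dag\eta)$, then $\epsilon m=\epsilon\otimes\epsilon$, and finally $\epsilon\Ctrans=\epsilon^\dag$ (which holds since $\epsilon=\epsilon^*$). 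You instead dualize the whole statement by pairing against a test vector $y$, which reduces it to the scalar identity $\epsilon(xy^*)=\epsilon(x)\epsilon(y^*)$; the Frobenius law is replaced by adjointness of $m$ and $m^\dag$ together with traciality of $\psi$, and the step $\epsilon\Ctrans=\epsilon^\dag$ is replaced by the computation $\langle y,c\rangle=\overline{\epsilon(y)}$. Both proofs ultimately rest on the same two ingredients --- multiplicativity of $\epsilon$ and the symmetric Frobenius structure --- so the content is the same; your version is more elementary in that it avoids the graphical calculus (which, as you note, is only being set up at this point), at the cost of carrying the conjugate-linearity bookkeeping explicitly, while the paper's version is the one that generalizes mechanically to the rest of the diagrammatic lemmas in this section.
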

\begin{proof}
We use the Frobenius law, the fact that $\epsilon$ is a homomorphism, so $\epsilon m=\epsilon\otimes\epsilon$ and that $\epsilon=\epsilon^*$, so $\epsilon\Ctrans=\epsilon^\dag$:
$$
\Diagram{\DMor{vec}0/1 (1,-.5) {$\scriptstyle x$}
         \Dmor{bcirc}1/2 (1,1) \Dmor{circ}0/0 (0.5,1.5)}=
\Diagram{\DMor{vec}0/1 (0.5,-.5) {$\scriptstyle x$}
         \Dmor{bcirc}2/1 (1,1) \Dmor{circ}0/0 (1,1.5) \Dmor{bcirc}0/2 (2,0) \draw (2.5,0.5) -- (2.5,1.5);}=
\Diagram{\DMor{vec}0/1 (1,-.5) {$\scriptstyle x$} \Dmor{circ}1/0 (1,1)
         \Dmor{circ}1/0 (2,1) \Dmor{bcirc}0/2 (2.5,0) \draw (3,0.5) -- (3,1);}=
\Diagram{\DMor{vec}0/1 (1,-.5) {$\scriptstyle x$} \Dmor{circ}1/0 (1,1)
         \Dmor{circ}0/1 (2.5,0) \draw (2.5,0.5) -- (2.5,1);}
$$
\end{proof}
\begin{lem}
For any $x,y\in\Alg$,
$$\epsilon(x*y)=\eta^\dag(S(x)\cdot y)=\eta^\dag(x\cdot S(y)).$$
Consequently, the bilinear form induced by $\epsilon$ in the passive dual differs from the original by composing with the antipode:
${\wspider{2/0}}=
\Diagram{\DMor{square}1/1 (1,0) {$\scriptstyle S$} \Dmor{bcirc}2/0 (1.5,1.5) \draw (2,1) -- (2,-1);}=
\Diagram{\DMor{square}1/1 (2,0) {$\scriptstyle S$} \Dmor{bcirc}2/0 (1.5,1.5) \draw (1,1) -- (1,-1);}$.
\end{lem}
\begin{proof}
See \cite{LS69}.
\end{proof}
\begin{lem}
The sesquilinear form $(x,y)\mapsto\epsilon(x^\star *y)$ coincides with the inner product in $\Alg$. In particular, it is again a positive inner product. Hence, $\epsilon$ is a positive Haar functional.
\end{lem}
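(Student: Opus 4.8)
The plan is to read this off directly from the preceding lemma. That lemma states $\epsilon(x*y)=\eta^\dag(S(x)\cdot y)$ for all $x,y\in\Alg$, where $\cdot=m$ denotes the original multiplication and $*=\Delta^\dag$ the dual one. Substituting $x^\star$ for $x$ gives $\epsilon(x^\star*y)=\eta^\dag\bigl(S(x^\star)\cdot y\bigr)$. Now I invoke the definition of the dual involution, $x^\star=S(x^*)$, together with the involutivity $S^2=\id$ of the antipode (from the lemma on the antipode's properties, since $S=S^{-1}$): one gets $S(x^\star)=S^2(x^*)=x^*$. Hence $\epsilon(x^\star*y)=\eta^\dag(x^*\cdot y)=\langle x,y\rangle$, which is exactly the inner product induced on $\Alg$ by the Haar functional $\psi=\eta^\dag$.

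Since the sesquilinear form in the statement is literally the original inner product, it is in particular positive definite and non-degenerate, and the remaining assertions follow at once. The functional $\epsilon$ is positive with respect to the passive-dual $\star$-structure because $\epsilon(z^\star*z)=\langle z,z\rangle\ge 0$ for every $z$, with equality only at $z=0$; combined with the earlier lemma showing that $\epsilon$ is left-invariant in the passive dual, this says precisely that $\epsilon$ is a positive Haar functional there. Non-degeneracy moreover shows that the passive dual, equipped with $\epsilon$, is again a Frobenius $*$-algebra whose associated inner product agrees with the one we started from — which is the normalization fact underpinning the rest of Proposition~\ref{P.dagdual}.

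There is no genuine obstacle here: the whole content is the substitution plus $S^2=\id$. The only point requiring care is the bookkeeping of which product, involution, and antipode act on which side — the previous lemma is phrased with the original $m$ and $S$, whereas the form in the statement uses the dual product $*$ and the dual involution $\star$ — and the bridge between the two is exactly the identity $S(x^\star)=x^*$.
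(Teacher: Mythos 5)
Your proof is correct and is essentially identical to the paper's: both substitute $x^\star$ into the preceding lemma and use $S(x^\star)=S^2(x^*)=x^*$ to identify the form with $\eta^\dag(x^*y)=\langle x,y\rangle$. The extra remarks on positivity and non-degeneracy are consistent with what the paper leaves implicit.
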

\begin{proof}
This is a direct consequence of the preceding lemma and the definition of the involution in the dual:
$$\epsilon(x^\star*y)=\eta^\dag(S(x^\star)y)=\eta^\dag(x^*y).$$
\end{proof}
\begin{lem}
We have $\Delta^\dag\Delta=n\,\id$, where $n=\eta^\dag\eta$. That is, we get the structure of a special Frobenius $*$-algebra on the dual if we scale the above mentioned Haar functional, bilinear form, and inner product by the factor $\frac{1}{n}$.
\end{lem}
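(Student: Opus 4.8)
The plan is to test the claimed operator identity against the inner product. For $x,y\in\Alg$, adjointness gives $\langle\Delta^\dag\Delta x,y\rangle=\langle\Delta x,\Delta y\rangle$, where on the right we use the inner product of $\Alg\otimes\Alg$, i.e.\ $\langle a,b\rangle=(\psi\otimes\psi)(a^*b)$ with the componentwise product. Because $\Delta$ is both a $*$-homomorphism and an algebra homomorphism (the defining properties of a Hopf $*$-algebra), $\Delta(x)^*\Delta(y)=\Delta(x^*)\Delta(y)=\Delta(x^*y)$, so the right-hand side equals $(\psi\otimes\psi)\bigl(\Delta(x^*y)\bigr)$. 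Hence everything reduces to the scalar identity $(\psi\otimes\psi)\circ\Delta=n\,\psi$.

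That identity is exactly left-invariance of the Haar functional, pushed one step further: from $(\psi\otimes\id)\Delta(z)=\psi(z)\,1_\Alg$ (valid for every $z$), applying $\psi$ to both sides gives $(\psi\otimes\psi)\Delta(z)=\psi(z)\,\psi(1_\Alg)$, and $\psi(1_\Alg)=\eta^\dag\eta=n$ by the normalization of the special symmetric Frobenius structure (the relation $\eta^\dag\eta=\dim\Alg$ recorded above). Therefore $\langle\Delta^\dag\Delta x,y\rangle=n\,\psi(x^*y)=n\langle x,y\rangle$ for all $x,y$, and since the inner product is non-degenerate, $\Delta^\dag\Delta=n\,\id$.

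Equivalently, one may argue diagrammatically: $\Delta^\dag\Delta$ is the ``bubble'' formed from $\Gwhite\Gfork$ and $\Gwhite\Gmerge$; using the bialgebra relation (so that one of the two branches of $\Gwhite\Gmerge$ slides across $\Gwhite\Gfork$) together with left-invariance in the form $(\eta^\dag\otimes\id)\Delta=\eta\eta^\dag$, the bubble collapses to $\eta^\dag\eta=n$ times a single strand. For the final sentence of the statement, once $\Delta^\dag\Delta=n\,\id$ is in hand it is only bookkeeping: the dual carries $\Delta^\dag$ as its multiplication, and rescaling the (dual) Haar functional, bilinear form and inner product by $\tfrac1n$ rescales the adjoint entering the specialness axiom by the compensating factor, so that $\Delta^\dag$ becomes the multiplication of a special symmetric Frobenius $*$-algebra.

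I do not anticipate a real obstacle; the argument is short, and its only content is choosing the right three ingredients (the homomorphism property of $\Delta$, left-invariance of $\psi$, and $\eta^\dag\eta=n$). The places that need a little care are the normalization of the functional on $\Alg\otimes\Alg$ (it is $\psi\otimes\psi$, so ``$\eta^\dag\eta$'' there reads as $n^2$, not $n$) and keeping track of which copy of the inner product each $\dag$ refers to when one passes to the dual, so that the rescaling constants in the last assertion come out as stated.
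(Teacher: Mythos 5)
Your proof of the operator identity $\Delta^\dag\Delta=n\,\id$ is correct, but it follows a genuinely different route from the paper's. The paper leans on the chain of lemmata preceding this one: since $\epsilon$ is a positive Haar functional on the passive dual and a Haar functional can always be normalized so that the resulting Frobenius $*$-algebra is special, $\Delta^\dag\Delta$ must be a scalar multiple $\alpha\,\id$ of the identity, and $\alpha$ is then pinned down by testing on the unit, $\alpha n=\eta^\dag\Delta^\dag\Delta\eta=\langle\eta\otimes\eta,\eta\otimes\eta\rangle=n^2$. You instead verify the identity on all vectors at once, computing $\langle\Delta x,\Delta y\rangle=(\psi\otimes\psi)(\Delta(x^*y))=n\,\psi(x^*y)$ from the $*$-homomorphism property of $\Delta$ together with left-invariance of $\psi$ and $\psi(1_\Alg)=\eta^\dag\eta=n$. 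Your argument is self-contained (it does not need the imported fact that a special normalization of the dual Haar functional exists) and proves slightly more, namely that $n^{-1/2}\Delta$ is an isometry; the paper's is shorter given the lemmata already in place. Both are valid.

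The weak point is your last paragraph. You assert that rescaling the dual's Haar functional, bilinear form and inner product by $\tfrac1n$ makes the compensating factors come out so that $\Delta^\dag$ satisfies the specialness axiom, but you do not carry out the bookkeeping, and when one does, the factor goes the other way. If the inner product on the dual is $\lambda$ times the original one (hence $\lambda^2$ times on $\Alg^{\otimes 2}$), the adjoint of the dual multiplication $M=\Delta^\dag\colon\Alg^{\otimes2}\to\Alg$ with respect to the rescaled structure is $\lambda^{-1}M^\dag=\lambda^{-1}\Delta$, so $MM^{\dag_{\mathrm{new}}}=\lambda^{-1}\Delta^\dag\Delta=(n/\lambda)\,\id$, and specialness forces $\lambda=n$: one must scale \emph{up} by $n$, not down by $\tfrac1n$. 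This is also what the normalization $\eta^\dag\eta=\dim\Alg$ demands, since the dual's unit $\epsilon^\dag$ has $\epsilon\epsilon^\dag=1$ in the original inner product and its squared norm must become $n$. (One can see where the $\tfrac1n$ comes from: the \emph{rescaled adjoint} of $\Delta^\dag$ is $\tfrac1n\Delta$, i.e.\ $\tfrac1n$ times the old adjoint, which is not the same as scaling the inner product by $\tfrac1n$.) The reciprocal is already present in the statement you were handed, so this is arguably inherited rather than introduced by you, but a complete proof of the second sentence has to either correct the factor or say precisely which object is being multiplied by $\tfrac1n$; an appeal to ``compensating factors'' does not settle it.
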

\begin{proof}
Since $\epsilon$ is a Haar functional, we know that it can be normalized such that the corresponding Frobenius $*$-algebra is special. Hence $\Delta^\dag\Delta=\alpha\,\id$ for some $\alpha\in\C$. Now, it remains to determine the constant $\alpha$. For this, we use the fact that $\Delta$ is unital:
$$\alpha n=\alpha\,\eta^\dag\eta=\eta^\dag\Delta^\dag\Delta\eta=\eta^\dag\eta\,\eta^\dag\eta=n^2.$$
\end{proof}
\begin{proof}[Proof of Proposition~\ref{P.dagdual}]
We just proved that the inner product for $\Gamma$ coincides with the inner product for $\hat\Gamma$ up to the constant $1/n$. From this, the formula $\widehat{A^\dag}=n^{k-l}\hat A^\dag$ directly follows. We also proved that the bilinear form differs in addition by composing with the antipode $S$. From this, the formula for the transposition follows. Finally, combining these two, we arrive at the third formula.
\end{proof}

\begin{prop}\label{P.FourierOps}
Let $\Gamma$ be a finite quantum group and $\hat\Gamma$ its dual. Then, for every $x,y\in l^2(\Gamma)$, we have
\begin{align*}
\widehat{x*y}&=\hat x\bullet \hat y,&n\,\widehat{x\bullet y}&=\hat x*\hat y,\\
\widehat{x^\star}&=\hat x^*,&\widehat{x^*}&=\hat x^\star.
\end{align*}
\end{prop}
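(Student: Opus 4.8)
The plan is to derive everything from Proposition~\ref{P.dagdual} together with the definitional identities $\widehat{x*y}=\hat x\hat y$ and $\widehat{x^\star}=\hat x^*$ that were recorded just before it, by applying the Fourier transform and its hatted structure maps to these identities and then dualizing once more. Recall the dictionary: on $l^2(\Gamma)$ the product $\bullet$ is $m$, the product $*$ is $\Delta^\dag$, and $x^\star=S(x^*)$; on $l^2(\hat\Gamma)$ the multiplication is $\widehat{\Delta^\dag}$ (so $\hat x\hat y=\widehat{x*y}$ by definition of $\Phi$ as a monoidal functor) and the comultiplication is $\widehat{m^\dag}$.

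First I would treat the four products. The identity $\widehat{x*y}=\hat x\bullet\hat y$ is \emph{not} immediate, since $\hat x\bullet\hat y$ means $\widehat{m^\dag}{}^\dag$ applied to $\hat x\otimes\hat y$, i.e. the \emph{comultiplication's adjoint} on the dual, not the multiplication; but the dual's multiplication $\widehat{\Delta^\dag}$ equals $\widehat{(m^\dag){}^\dag}$ up to the scaling constant from Proposition~\ref{P.dagdual}. Concretely, applying $\widehat{(-)^\dag}=n^{l-k}\hat{(-)}{}^\dag$ with $m\colon l^2(\Gamma)^{\otimes2}\to l^2(\Gamma)$ (so $l-k=-1$) gives $\widehat{m^\dag}=\widehat{(m){}^\dag}=n^{1}\,\hat m{}^\dag$, wait — more carefully, $\widehat{m^\dag}=n^{k-l}\hat m{}^\dag$ with the roles of $k,l$ for the map $m^\dag\colon l^2(\Gamma)\to l^2(\Gamma)^{\otimes 2}$, i.e. $\widehat{m^\dag}=n\,\widehat m{}^\dag$ is the comultiplication on $\hat\Gamma$, whereas the \emph{multiplication} on $\hat\Gamma$ is $\widehat{\Delta^\dag}=n^{-1}\,\widehat\Delta{}^\dag$. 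So $\hat x\bullet\hat y$, meaning $m_{\hat\Gamma}(\hat x\otimes\hat y)=\widehat{\Delta^\dag}(\hat x\otimes\hat y)$, equals $\widehat{\Delta^\dag(x\otimes y)}=\widehat{x*y}$ directly from functoriality of $\Phi$ — so in fact the first identity is precisely the definition $\widehat{x*y}=\hat x\hat y$ reread with $\hat x\hat y=\hat x\bullet\hat y$ since $\bullet$ on the dual side \emph{is} the dual multiplication. Then $\widehat{x\bullet y}=n\,\hat x*\hat y$ follows by applying the same reasoning with $m$ in place of $\Delta^\dag$: $\widehat{m(x\otimes y)}=\widehat m(\hat x\otimes\hat y)$, and $\widehat m=n\,(\widehat{m^\dag}){}^\dag\cdot(\text{const})$... — precisely, since on $\hat\Gamma$ the product $*$ is $(\widehat{m^\dag}){}^\dag$, and Proposition~\ref{P.dagdual} gives $\widehat{m^\dag}=n\,\widehat m{}^\dag$, hence $\widehat m=n\,(\widehat{m^\dag}){}^\dag$, i.e. $\widehat{x\bullet y}=n(\hat x*\hat y)$.

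Next the two involutions. The identity $\widehat{x^\star}=\hat x^*$ is again exactly the recorded definitional fact $\widehat{x^\star}=\hat x^*$ (the involution on $\hat\Gamma$ as a Hopf $*$-algebra is built so that $\Phi$ intertwines $\star$ with $*$). For the remaining one $\widehat{x^*}=\hat x^\star$: write $x^*=S(x^\star)$ using $S=S^{-1}$ and $x^\star=S(x^*)\Rightarrow S(x^\star)=S^2(x^*)=x^*$ — rather, start from $x^\star = S(x^*)$, apply $S$ to both sides and use $S^2=\id$ to get $S(x^\star)=x^*$. Then $\widehat{x^*}=\widehat{S(x^\star)}=\hat S(\widehat{x^\star})=\hat S(\hat x^*)$, and since the $\star$-involution on $\hat\Gamma$ is $z\mapsto z^\star=\hat S(z^*)$ (the passive dual of $\hat\Gamma$'s structure, consistent with Proposition~\ref{P.dagdual}'s formula $\widehat{A^*}=\hat S\,\hat A^*\,\hat S$ specialized to vectors $A\colon\C\to l^2(\Gamma)$ where one leg of $\hat S$ disappears), this reads $\hat S(\hat x^*)=\hat x^\star$, giving $\widehat{x^*}=\hat x^\star$.

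The main obstacle is purely bookkeeping: keeping straight which of $m,\Delta,\Delta^\dag,m^\dag$ plays the role of multiplication versus comultiplication on each side, and tracking the powers of $n$ that Proposition~\ref{P.dagdual} introduces (the asymmetry $\widehat{x*y}=\hat x\bullet\hat y$ with no constant but $\widehat{x\bullet y}=n\,\hat x*\hat y$ with a constant is exactly the manifestation of the $n^{l-k}$ factor, since $m$ and $m^\dag$ have opposite "valence" $l-k=\mp1$). Once the conventions are fixed, each of the four identities is a one-line consequence of either the definition of $\Phi$ as a monoidal functor or of Proposition~\ref{P.dagdual}; there is no substantive computation beyond substituting $S^2=\id$ for the involution cases.
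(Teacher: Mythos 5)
Your proposal is correct and follows essentially the same route as the paper: the first column is definitional (functoriality of $\Phi$ and the construction of the passive dual), and the second column comes from Proposition~\ref{P.dagdual} via $\hat m=n\,(\widehat{m^\dag})^\dag$ and $\widehat{x^*}=\hat S\hat x^*=\hat x^\star$. One bookkeeping slip to fix: applying $\widehat{A^\dag}=n^{l-k}\hat A^\dag$ to $A=m^\dag\colon l^2(\Gamma)\to l^2(\Gamma)^{\otimes 2}$ (so $k=1$, $l=2$) gives $\hat m=\widehat{(m^\dag)^\dag}=n\,(\widehat{m^\dag})^\dag$, equivalently $\hat m^\dag=n\,\widehat{m^\dag}$ --- your intermediate assertion $\widehat{m^\dag}=n\,\hat m^\dag$ (and likewise $\widehat{\Delta^\dag}=n^{-1}\hat\Delta^\dag$) has the factor on the wrong side, although the relation you actually use in the final step is the correct one, so the conclusion stands.
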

(Compare with Definition~\ref{D.dual} of duality between quantum coherent algebras.)
\begin{proof}
It remains to show the second formula, everything else was already proven below Definition~\ref{D.dual}. For that, we use the first formula of Proposition~\ref{P.dagdual}. Recall that the comultiplication in the dual is given by $\widehat{m^\dag}$.
\[n\,\widehat{x\bullet y}=n\,\widehat{m(x\otimes y)}=n\,\hat m(\hat x\otimes\hat y)=(\widehat{m^\dag})^\dag(\hat x\otimes\hat y)=\hat x*\hat y\qedhere\]
\end{proof}

\begin{rem}
For any ordinary finite abelian group $\Gamma$, we have that $\hat\Gamma\simeq\Gamma$. This is because for every $g\in\Gamma$, there is a character $\tau_g\in l^2(\Gamma)$ such that $\tau_g\tau_h=\tau_{gh}$. In this context, by \emph{Fourier transform} we usually mean the linear mapping $e_g\mapsto\tau_g$, which then also maps $\tau_g\mapsto \frac{1}{n}e_g$.
\end{rem}

\begin{prop}
\label{P.GHomog}
Let $\Gamma$ be a finite quantum group. Then $\epsilon^\dag$ is a minimal projection in $C(\Gamma)$.
\end{prop}
\begin{proof}
Consider any $x\in l^2(X)$. Then it holds that $x\bullet\epsilon^\dag=\epsilon(x)\epsilon^\dag$ -- we essentially proved this already in Lemma~\ref{L.epsilon}. Putting $x=\epsilon^\dag$, this means that $\epsilon^\dag$ is indeed a projection. In addition, if $x$ is a projection, then the equation means that $x\bullet\epsilon^\dag$ equals either to $\epsilon^\dag$ or to~0.
\end{proof}

\subsection{Classical translation association schemes}
First, we recall the classical theory, see also e.g. \cite[Section~6]{MT09} or \cite[Section~2.10.B]{BCN89} for a summary.

A translation association scheme is an association scheme over a group $\Gamma$ such that for each $A_i$, we have $[A_i]_{xy}=1$ implies $[A_i]_{gx,gy}=1$ for any $g,x,y\in\Gamma$. Defining $\Gamma_i=\{g\in\Gamma\mid A_{ge}=1\}$, we obtain a partition $\{\Gamma_i\}$ of $\Gamma$ and each $A_i$ is a Cayley graph of $\Gamma$ with respect to the set $\Gamma_i$.

A particular example is the \emph{group scheme}, where the elements of the scheme are indexed by the group elements and we have $[A_g]_{xy}=1$ if and only if $x=gy$. Note that in this case, the operations of $\bullet$ and $*$ in $l^2(\Gamma)$ exactly correspond to the Schur product $\bullet$ and composition for the adjacency matrices. Indeed, we have $A_gA_h=A_{gh}$, and $A_g\bullet A_h=\delta_{gh}A_g$, which exactly corresponds to Example~\ref{E.GammaOp}. The same holds for the involutions. Note in addition, that $A_g$ actually acts by convolution, i.e. $A_ge_h=e_g*e_h$.

Taking arbitrary partition $\{\Gamma_i\}$ of a group $\Gamma$, the associated Cayley graphs $A_i=\sum_{g\in\Gamma_i}A_g$ define an association scheme if and only if they span a coherent algebra. That is, translation association schemes are exactly the \emph{subschemes} of the corresponding group scheme. Here, a subscheme means that the associated Bose--Mesner algebra is a coherent subalgebra.

Equivalently, the partition $\{\Gamma_i\}$ defines a translation association scheme if and only if $\spanlin\{\alpha_i\}$ with $\alpha_i=\sum_{g\in\Gamma_i}e_g\in l^2(\Gamma)$ is closed under both products and involutions. In fact, any subalgebra $R\subset l^2(\Gamma)$ that is closed under both products and involutions, will have such a basis and define a translation association scheme. Such a subalgebra is called a \emph{Schur ring}. Note again that the Schur ring is isomorphic with the corresponding coherent algebra and that all the adjacency matrices act via convolution $A_ix=\alpha_i*x$.

The duality we want to talk about here was first formulated by Tamaschke \cite{Tam63} for Schur rings and later reformulated by Delsarte \cite{Del73} to the language of association schemes.

So, assume now that the group $\Gamma$ is abelian. Denote by $\tau_g\in l^2(\Gamma)$ the irreducible characters. It is known that these are eigenvectors of the adjacency matrix corresponding to any Cayley graph, so $A_i\tau_g=\lambda_g^i\tau_g$, where\footnote{In the combinatorial literature, the formula usually does not contain the inverse. This is because it is common to use a transposed definition of a Cayley graph and translation schemes, where the group acts from the right.} $\lambda^i_g=\sum_{k\in\Gamma_i}\tau_g(k)^{-1}$. 

Denote by $V_j\subset l^2(\Gamma)$ the common eigenspaces of all $A_i$; in particular, $V_0=\spanlin\{\tau_e\}$, where $\tau_e=\eta$. Denote by $E_j$ the corresponding projections and by $\{\hat\Gamma_j\}$ the partition of $\hat\Gamma=\Gamma$ such that $\{\tau_g\}_{g\in\hat\Gamma_j}$ forms a basis of $V_j$. We claim that $\{\hat\Gamma_i\}$ defines a translation association scheme that is dual to the original one.

This is clear from the Schur ring viewpoint. If we denote $\epsilon_j:=\sum_{g\in\hat\Gamma_j}\tau_g$, we obtain the basis of convolution projections of the original Schur ring corresponding to the projections $E_j$. Now, applying the Fourier transform on the Schur ring, we obviously get a dual one, where the products are swapped and it has a basis of Schur projections $\beta_j=\sum_{g\in\hat\Gamma_j}e_g$.

\begin{ex}
\label{E.Knn}
Take $\Gamma=\Z_n\times\Z_2$ and its generating set $S=\{(i,1)\mid i\in\Z_n\}$. As one can easily check, the Cayley graph of $\Gamma$ with respect to $S$ is the complete bipartite graph $K_{n,n}$. Denote by $A$ its adjacency matrix. The graph is actually strongly regular since $A^2=nJ-nA$. That is, we have a translation association scheme $\Ass=\{A_0,A_1,A_2\}$ with
$$A_0=I,\quad A_1=A,\quad A_2=J-I-A$$
corresponding to the partition of $\Gamma$ given by
$$\Gamma_0=\{(0,0)\},\quad\Gamma_1=\{(i,1)\mid i\in\Z_n\},\quad\Gamma_2=\{(i,0)\mid i\in\Z_n\setminus\{0\}\}.$$

Since all $A_i$ are polynomials in $A$, determining the common eigenspaces reduces to studying just $A$. Consider the eigenvector $\tau_{(i,j)}$, $\tau_{(i,j)}(a,b)=\omega^{ia}(-1)^{jb}$, where $\omega$ is some primitive $n$-th root of unity. We determine its eigenvalue using the formula
$$\lambda_{(i,j)}=\sum_{(a,b)\in S}\tau_{(i,j)}(a,b)^{-1}=(-1)^j\sum_{a=0}^{n-1}\omega^{ia}=
\begin{cases}
0&i\neq 0,\\
n&i=0,j=0,\\
-n&i=0,j=1.
\end{cases}$$

Thus, we have three eigenspaces corresponding to the eigenvalues $n$, 0, and $-n$. Denote them by $V_0$, $V_1$, and $V_2$. We have $V_i=\{\tau_g\mid g\in\hat\Gamma_i\}$, here
$$\hat\Gamma_0=\{(0,0)\},\quad\hat\Gamma_1=\{(i,j)\mid i\neq0\},\quad\hat\Gamma_2=\{(0,1)\}.$$

That is, the dual of $K_{n,n}$ as a strongly regular Cayley graph of $\Gamma$ with respect to the generating set $S=\Gamma_1$, is a strongly regular Cayley graph of $\hat\Gamma=\Gamma$ with respect to the generating set $\hat\Gamma_1$. As one can easily find out, it is the cocktail party graph (complete $n$-partite graph $K_{2,2,\dots,2}$; the complement of $K_2\sqcup\cdots\sqcup K_2$).
$$
\begin{tikzpicture}[baseline={(0,-.75cm-.5ex)},every node/.style={circle,draw,fill=black,inner sep=1pt}]
\node (A1) at (0,0){};\node (B1) at (0,-.5){};\node (C1) at (0,-1){};\node (D1) at (0,-1.5){};
\node (A2) at (1,0){};\node (B2) at (1,-.5){};\node (C2) at (1,-1){};\node (D2) at (1,-1.5){};
\draw (A1) -- (A2) -- (B1) -- (B2) -- (C1) -- (C2) -- (D1) -- (D2) -- (C1) -- (A2) -- (D1) -- (B2) -- (A1) -- (C2) -- (B1) -- (D2) -- (A1);
\node[fill=white,draw=none] at (0.5,-2) {$K_{4,4}$};
\end{tikzpicture}
\qquad\leftrightarrow\qquad
\begin{tikzpicture}[baseline={(0,-.5ex)},every node/.style={circle,draw,fill=black,inner sep=1pt}]
\foreach \i in {0,1,...,7} {
	\node (\i) at (\i*45:1) {};
}
\draw (0) -- (4) -- (3) -- (7) -- (0) -- (2) -- (6) -- (5) -- (1) -- (2) -- (4) -- (6) -- (0) -- (3) -- (6) -- (1) -- (3) -- (5) -- (7) -- (1) -- (4) -- (7) -- (2) -- (5) -- (0);
\node[fill=none,draw=none] at (0,-1.25) {$K_{2,2,2,2}$};
\end{tikzpicture}
$$

We can also compute the eigenvalues of the other elements of the association scheme in order to obtain the eigenvalue matrix. Obviously, $A_0=I$ has a single eigenvalue 1, whereas for $A_2=\frac{1}{n}A^2-I$, the eigenspaces $V_0$ and $V_2$ have the eigenvalue $n-1$ while $V_1$ has the eigenvalue $-1$. Hence, the eigenvalue matrix is given by (cf.\ also Eq.~\eqref{eq.SRGeigenmatrix})
$$P=\begin{pmatrix}
1& n&n-1\\
1& 0& -1\\
1&-n&n-1
\end{pmatrix}.$$
Inverting this matrix, we can compute the dual eigenvalue matrix $Q=2nP^{-1}$ as
$$Q=\begin{pmatrix}
1&2n-2& 1\\
1&   0&-1\\
1&  -2& 1
\end{pmatrix}.$$
Denote by $E_i$ the projection on the eigenspace $V_i$. Using the above computation, we can express the adjacency matrices $A_i$ as a combination of the projections $E_i$ and vice versa:
$$A_0=E_0+E_1+E_2,\quad A_1=nE_0-nE_2,\quad A_2=(n-1)(E_0+E_2)-E_1,$$
$$E_0=\frac{1}{2n}(A_0+A_1+A_2)=J,\quad E_1=\frac{1}{n}((n-1)A_0-A_2),\quad E_2=\frac{1}{2n}(A_0-A_1+A_2).$$
Doing the same computation for the dual graph, we can check that the eigenvalues and dual eigenvalues get swapped.
\end{ex}

\begin{ex}
We recently studied quantum symmetries and deformations of some distance regular Cayley graphs of abelian groups in \cite{GroAbSym,GQGraph}. We computed the eigenspaces in \cite{GroAbSym} from which one can easily see that
\begin{itemize}
\item the halved hypercube graph is distance regular and its dual is the folded hypercube graph and vice versa;
\item all the Hamming graphs (including the hypercube graph or the rook's graph) are distance regular and self-dual.
\end{itemize}
\end{ex}

\subsection{Twisted translation schemes}
In this section, we are going to show how to construct a pair of dual CCQAS based on a certain twisting procedure formulated in \cite[Section~5]{GQGraph}. That is, we start with an ordinary abelian group $\Gamma$, construct a pair of translation association schemes, and twist them. The duality property should survive this twisting.

So, consider a classical translation association scheme over an abelian group $\Gamma$ corresponding to some partition $\{\Gamma_i\}$. Denote by $\lambda^i_g$ the eigenvalues so that $A_i\tau_g=\lambda^i_g\tau_g$.

Now, consider a bicharacter $\sigma$ on $\Gamma$. Consider the comodule algebra twist
$$C(\breve\Gamma)=C^*(\breve\tau_g,g\in\Gamma\mid\breve\tau_g\breve\tau_h=\bar\sigma_{g,h}\breve\tau_{gh}).$$
We define the twisted translation association scheme by $\breve A_i\breve\tau_g=\lambda^i_g\breve\tau_g$.

We should first prove that this is indeed a CCQAS. So, let us go through the axioms. Axiom (1) means that $\breve A_i$ define quantum graphs -- this was proven in \cite{GQGraph}. All the other axioms follow from the fact that in the eigenbasis, $A_i$ and $\breve A_i$ are the same matrices. For (3), note also that $\eta=\tau_e$ in $C(\Gamma)$ and $\breve\eta=\breve\tau_e$ in $C(\breve\Gamma)$, so the matrix $J=\eta\eta^\dag$ also looks the same in both cases.

Now, by definition, the eigenvalue matrix $P$ does not change and hence also the dual eigenvalue matrix $Q$. So, we obtain a CCQAS with isomorphic coherent algebra to the original translation association scheme. Hence, if we twist the dual, we must obtain a dual of the twist.

As a remark, note that it already follows from \cite[Section~5]{GQGraph} that the resulting CCQAS will be actually quantum isomorphic to the original one in the sense that there is a monoidal unitary isomorphism of the categories $\langle m,\eta,A_1,\dots,A_d\rangle\to\langle\breve m,\breve\eta,\breve A_1,\dots,\breve A_d\rangle$ mapping generators to generators.

Nevertheless, this example might be slightly disappointing. We got an association scheme that is formally dual to the original one, but the group duality that was originally lying in the background disappeared. The comodule algebra twist $C(\breve\Gamma)$ does not have any Hopf algebra structure, so the twisted association scheme cannot be interpreted as a quantum translation association scheme belonging to some quantum group. Examples of this latter kind will be constructed in the following section.

\begin{ex}
The duality properties of the hypercube graph, the halved and folded hypercube, or the rook's graph pass to their twists, which were explicitly constructed in \cite[Sections~6,~7]{GQGraph}. We mentioned the quantum rook's graph already in Example~\ref{E.rook}.
\end{ex}


\subsection{Translation quantum association schemes}
\label{secc.trans}

In this section, we formulate the main theorem of this article concerning the duality of translation quantum association schemes. For that purpose, we would like to be able to speak about general quantum association schemes, not just the cocommutative ones. As already indicated, this is possible only by studying the associated coherent algebra. So, similarly to finite quantum spaces and finite quantum groups, quantum association schemes will be just fictive objects that will be given by their Bose--Mesner algebra.

\begin{defn}
A $d$-class \emph{quantum association scheme} over a finite quantum space $X$ is a $(d+1)$-dimensional homogeneous quantum coherent algebra over $X$. Similarly to finite quantum spaces and finite quantum groups, we will use a special notation and terminology here. We will usually denote a quantum association scheme by $\Ass$ and treat it as some abstract object. The associated quantum coherent algebra will then be denoted by $\spanlin\Ass$ and called its \emph{Bose-Mesner algebra}.
\end{defn}

Recall Theorem~\ref{T.AlgAss}, which tells us that if $\spanlin\Ass$ is cocommutative, then there is an actual CQAS $\Ass$ such that $\spanlin\Ass$ is its Bose--Mesner algebra.

\begin{defn}
Let $\Gamma$ be a finite quantum group. For any $x\in l^2(\Gamma)$, we denote $A_x:=\Delta^\dag(x\otimes\id)$, that is, $A_xy=x*y$. We interpret $A_x$ as the \emph{weighted Cayley graph} of $\Gamma$ with respect to $x$.
Graphically,
$$
\Diagram{\DMor{square}1/1 (1,0.5) {$\scriptstyle A_x$}}=
\Diagram{\Dmor{circ}2/1 (1,1) \DMor{vec}0/1 (0.5,-0.5) {$\scriptstyle x$} \draw (1.5,0.5) -- (1.5,-0.5);}
$$
\end{defn}

\begin{prop}
Let $\Gamma$ be a finite quantum group. Then
\begin{enumerate}
\item $A_xA_y=A_{x*y}$,
\item $A_x\bullet A_y=A_{x\bullet y}$,
\item $A_x^\dag=A_{x^\star}$,
\item $A_x^*=A_{x^*}$.
\end{enumerate}
\end{prop}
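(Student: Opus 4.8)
The plan is to verify each of the four identities by unfolding the definition $A_x = \Delta^\dagger(x\otimes\id)$ and using the Hopf/Frobenius structure, preferably via the string diagrams already set up in the excerpt. For (1), I would compute $A_xA_y = \Delta^\dagger(x\otimes\id)\Delta^\dagger(y\otimes\id)$ and recognize that stacking the two coproduct-daggers and feeding $x$ into one leg and $y$ into another leg of the resulting tree is, by coassociativity of $\Delta$ (equivalently co-coassociativity of $\Delta^\dagger$, i.e. associativity of the $\bullet$ product's dual), exactly $\Delta^\dagger(\Delta^\dagger(x\otimes y)\otimes\id) = \Delta^\dagger((x*y)\otimes\id) = A_{x*y}$. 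Diagrammatically this is the white-vertex associativity relation noted right after Proposition~\ref{P.dagdual}, so it is essentially a one-line picture manipulation.

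For (2), the Schur product of the maps is $A_x\bullet A_y = m(A_x\otimes A_y)m^\dagger$; I would expand both $A_x,A_y$ as white trees with inputs $x,y$ and then use the Frobenius law \eqref{eq.Flaw} to slide the black multiplication $m$ past the white comultiplications. The key algebraic input is the compatibility of $m$ and $\Delta$ coming from $\Delta$ being an algebra homomorphism (equivalently, in the neutral notation, the bialgebra axiom relating $\bullet$ and $*$); after the dust settles the two inputs $x,y$ get joined by a single black $m$, producing $A_{m(x\otimes y)} = A_{x\bullet y}$. For (3), I would use the definition $x^\star = S(x^*)$ together with $A_x^\dagger = (\Delta^\dagger(x\otimes\id))^\dagger = (x^\dagger\otimes\id)\Delta$, rewrite $x^\dagger$ as $(x^*)\Ctrans$, and then invoke the antipode axiom $m(S\otimes\id)\Delta = \eta\epsilon = m(\id\otimes S)\Delta$ (or more directly the lemma $\eta^\dagger(S(x)y) = \epsilon(x*y)$) to massage $(x^\dagger\otimes\id)\Delta$ into $\Delta^\dagger(S(x^*)\otimes\id) = A_{x^\star}$. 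Finally (4) is the easiest: $A_x^* = (A_x(y^*))^*$ (the characterization of $*$ given earlier), and since $A_x(y^*) = x*y^*$ and $*$ is an algebra-ish operation on $l^2(\Gamma)$ with $(x*y^*)^* = $ the appropriate thing, combined with the fact that $\Delta^\dagger$ is $*$-preserving (as $\Delta$ is a $*$-homomorphism), we get $A_x^* = A_{x^*}$ directly; alternatively it follows formally by composing (3) with the relation $x^{\star\star} = x$ or by noting $A_x^* = (A_x^\dagger)\Ctrans$ and combining (3) with the transpose formula.

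The main obstacle I anticipate is bookkeeping in (2) and (3): one must be careful about which Frobenius-algebra the dagger and transpose are taken in — the black one (that of $l^2(\Gamma)$ as a quantum space via the Haar functional) versus the white one (the dual) — since Proposition~\ref{P.dagdual} shows these differ by factors of $n$ and by composition with $\hat S$. In particular, in (3) the appearance of the antipode $S$ in $x^\star$ must be traced precisely to the antipode axiom and not accidentally to a change-of-Frobenius-structure factor; getting the $S$ on the correct leg, and confirming no stray scalar $n$ survives, is the delicate point. Everything else is routine diagrammatic rewriting using associativity, the Frobenius law, and the snake equation, all of which are available from the earlier part of the paper.
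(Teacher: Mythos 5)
Your proposal is correct and follows essentially the same route as the paper: unfold $A_x=\Delta^\dag(x\otimes\id)$ and use associativity of $\Delta^\dag$ for (1), the bialgebra compatibility of $m$ and $\Delta$ for (2), and the antipode together with the relation between the two pairings for (3)--(4). The only point you slightly underestimate is (2), where the paper's diagrammatic derivation also needs the antipode (via the fact that the white and black bilinear forms differ by $S$ and that $S$ is an antihomomorphism) to convert the Frobenius comultiplication $m^\dag$ into something compatible with the white structure --- the same subtlety you correctly flag for (3).
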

\begin{proof}
We do the proof using the graphical calculus. For (1), we use associativity of $\Delta^\dag$:
$$
\Diagram{\DMor{square}1/1 (1,1.5) {$\scriptstyle A_x$} \DMor{square}1/1 (1,-.5) {$\scriptstyle A_y$}}=
\Diagram{\Dmor{circ}2/1 (1,1.5) \Dmor{circ}2/1 (1.5,0) \draw (1,2) -- (1,2.5);
         \DMor{vec}0/1 (0.5,0) {$\scriptstyle x$} \draw (1.5,1) -- (1.5,0.5);
         \DMor{vec}0/1 (1,-1.5)  {$\scriptstyle y$} \draw (2,-0.5) -- (2,-1.5);}=
\Diagram{\Dmor{circ}[-0-/-] (2,1.5) \Dmor{circ}2/1 (1,0.5) \draw (2,2) -- (2,2.5);
         \DMor{vec}0/1 (0.5,-1) {$\scriptstyle x$} \draw (3,1) -- (3,-1.5);
         \DMor{vec}0/1 (1.5,-1.5)  {$\scriptstyle y$} \draw (1.5,0) -- (1.5,-0.5);}=
\Diagram{\Dmor{circ}[-0-/-] (2,1.5) \draw (2,2) -- (2,2.5);
         \DMor{vec}0/1 (1,0) {$\scriptstyle x*y$} \draw (3,1) -- (3,-1.5);}
$$

For (2), we are going to use the fact that $\Delta$ is a homomorphism. Graphically, this can be written as
$$
\Diagram{\Dmor{circ}1/2 (1,1) \Dmor{bcirc}2/1 (1,0)}=
\Diagram{\draw (1,1) -- (2,0); \draw (2,1) -- (1,0);
         \Dmor{circ}1/1 (1,0) \Dmor{circ}1/1 (2,0)
         \Dmor{bcirc}1/1 (1,1) \Dmor{bcirc}1/1 (2,1)}
$$
Also recall that the white and the black bilinear form differ by the antipode and that the antipode is an antihomomorphism. This, in particular, means that
$$
\Diagram{\Dmor{circ}2/0 (1.5,1) \Dmor{bcirc}2/1 (1,0) \draw (2,0.5) -- (2,-0.5);}=
\Diagram{\Dmor{bcirc}2/0 (1.5,2) \draw (2,1.5) -- (2,-1.5);
         \DMor{square}1/1 (1,0.5) {$\scriptstyle S$}
         \Dmor{bcirc}2/1 (1,-1)}=
\Diagram{\Dmor{bcirc}2/0 (1.5,2) \draw (2,1.5) -- (2,-1.5);
         \Dmor{bcirc}2/1 (1,1)
         \DMor{square}[-/00-] (0.5,-0.5) {$\scriptstyle S$}
         \DMor{square}[-/-00] (1.5,-0.5) {$\scriptstyle S$}}=
\Diagram{\DMor{square}1/1 (1,0) {$\scriptstyle S$} \Dmor{bcirc}[-0-/] (2,1.5)
         \DMor{square}1/1 (2,0) {$\scriptstyle S$} \Dmor{bcirc}[-0-/] (3,1.5)
         \DMor{bcirc}1/2 (3.5,0) {}}=
\Diagram{\Dmor{circ}[-0-/] (2,1) \draw (1,0.5) -- (1,-0.5);
         \Dmor{circ}[-0-/] (3,1) \draw (2,0.5) -- (2,-0.5);
         \Dmor{bcirc}1/2 (3.5,0) {}}$$
From this, we can derive:
$$
\Diagram{\Dmor{circ}2/1 (1.5,1) \Dmor{bcirc}2/1 (1,0) \draw (2,0.5) -- (2,-0.5);}=
\Diagram{\Dmor{circ}1/2 (1,1) \Dmor{bcirc}2/1 (1,0)
         \Dmor{circ}2/0 (2,2) \draw (2.5,1.5) -- (2.5,-0.5);}=
\Diagram{\draw (1,1) -- (2,0); \draw (2,1) -- (1,0);
         \Dmor{circ}1/1 (1,0) \Dmor{circ}1/1 (2,0)
         \Dmor{bcirc}1/1 (1,1) \Dmor{bcirc}1/1 (2,1)
         \Dmor{circ}2/0 (2.5,2) \draw (3,1.5) -- (3,-0.5);}=
\Diagram{\draw (1,1) -- (2,0); \draw (2.5,1) -- (1,0);
         \Dmor{circ}1/1 (1,0) \Dmor{circ}[-/0-] (2,0) \Dmor{bcirc}1/1 (1,1)
         \Dmor{circ}[00-/] (2.5,1) \Dmor{circ}[-0-/] (3.5,1)
         \Dmor{bcirc}1/2 (4,0)}=
\Diagram{\Dmor{bcirc}[-0-/-] (2,1.5) \Dmor{bcirc}1/2 (3,-0.5)
         \Dmor{circ}[0-0-/-] (1,0.5) \Dmor{circ}[-0-0/-] (3,0.5)
         \draw (0.5,0) -- (0.5,-1); \draw (1.5,0) -- (1.5,-1);}
$$
Having this, proving (2) is already straightforward:
$$
\Diagram{\Dmor{bcirc}[-0-/-] (1,2) \Dmor{bcirc}[-/-0-] (1,-1)
         \DMor{square}1/1 (0,0.5) {$\scriptstyle A_x$}
         \DMor{square}1/1 (2,0.5) {$\scriptstyle A_y$}}=
\Diagram{\Dmor{bcirc}[-0-/-] (2,1.5) \Dmor{bcirc}1/2 (3,-0.5)
         \Dmor{circ}[0-0-/-] (1,0.5) \Dmor{circ}[-0-0/-] (3,0.5)
         \DMor{vec}0/1 (0.5,-1) {$\scriptstyle x$} \DMor{vec}0/1 (1.5,-1) {$\scriptstyle y$}}=
\Diagram{\Dmor{circ}[-0-/-] (2,2) \Dmor{bcirc}[-0-/-] (1,1) \draw (3,1.5) -- (3,-1);
         \DMor{vec}0/1 (0,-0.5) {$\scriptstyle x$} \DMor{vec}0/1 (2,-0.5) {$\scriptstyle y$}}
$$

Finally, (3) and (4) is again rather straightforward:
$$
\Diagram{\DMor{square}1/1 (1,0.5) {$\scriptstyle A_x^\dag$}}=
\Diagram{\Dmor{circ}1/2 (1,-.5) \DMor{covec}1/0 (0.5,1) {$\scriptstyle x^\dag$} \draw (1.5,0) -- (1.5,1.5);}=
\Diagram{\Dmor{circ}1/2 (2.5,0.5) \Dmor{bcirc}2/0 (1.5,1.5) \DMor{vec}0/1 (1,0) {$\scriptstyle x^*$}
         \draw (2.5,0) -- (2.5,-1); \draw (3,1) -- (3,1.5);}=
\Diagram{\Dmor{circ}1/2 (2.5,1) \Dmor{circ}2/0 (1.5,2)
         \DMor{square}1/1 (1,0.5) {$\scriptstyle S$}
         \DMor{vec}0/1 (1,-1.5) {$\scriptstyle x^*$}
         \draw (2.5,0.5) -- (2.5,-2); \draw (3,1.5) -- (3,2);}=
\Diagram{\Dmor{circ}2/1 (1.5,1) \DMor{vec}0/1 (1,-0.5) {$\scriptstyle x^\star$} \draw(2,0.5) -- (2,-1);}
$$
$$
\Diagram{\DMor{square}1/1 (1,0.5) {$\scriptstyle A_x^*$}}=
\Diagram{\Dmor{circ}1/2 (1,-.5) \DMor{covec}1/0 (0.5,1) {$\scriptstyle x^\dag$}
         \Dmor{bcirc}[-0-/] (0.5,3) \Dmor{bcirc}0/2 (1.5,-1.5)
         \draw(-0.5,2.5) -- (-0.5,-2); \draw (1.5,0) -- (1.5,2.5); \draw (2,-1) -- (2,3);}=
\Diagram{\DMor{square}1/1 (1,2) {$\scriptstyle S$} \DMor{square}1/1 (0.5,-1) {$\scriptstyle S$} \DMor{square}1/1 (1.5,-1) {$\scriptstyle S$}
         \Dmor{circ}2/1 (1,0.5) \DMor{vec}0/1 (1.5,-3) {$\scriptstyle x^*$} \draw (0.5,-2) -- (0.5,-3);}=
\Diagram{\Dmor{circ}2/1 (1,0.5) \DMor{vec}0/1 (0.5,-1) {$\scriptstyle x^*$} \draw (1.5,0) -- (1.5,-1);}
$$
\end{proof}

\begin{cor}
$A_x$ is a quantum graph if and only if $x$ is a projection in $C(\Gamma)$. In that case, we call it a \emph{Cayley graph}. It has no loops if and only if $x\bullet\epsilon^\dag=0$. It is undirected if and only if $x=x^\star$.
\end{cor}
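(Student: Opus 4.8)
The plan is to transport everything through the map $x\mapsto A_x$, which is linear and, crucially, \emph{injective}. Injectivity is immediate: since $\epsilon^\dag$ is the unit for the convolution product $*$, we have $A_x(\epsilon^\dag)=x*\epsilon^\dag=x$, so $x$ is recovered from $A_x$; the same computation shows $I=A_{\epsilon^\dag}$. With this in hand, each of the asserted equivalences is obtained by matching an identity from the preceding Proposition against injectivity.

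First, $A_x$ defines a quantum graph iff $A_x\bullet A_x=A_x$ and $A_x^*=A_x$. By parts (2) and (4) of the Proposition these read $A_{x\bullet x}=A_x$ and $A_{x^*}=A_x$, so by injectivity they are equivalent to $x\bullet x=x$ and $x=x^*$, i.e.\ to $x$ being a projection in $C(\Gamma)$ (a self-adjoint idempotent, the idempotency being with respect to the multiplication $\bullet=m$ of $C(\Gamma)$, not the convolution). Assuming now that $A_x$ is a Cayley graph, the loop condition $A_x\bullet I=0=I\bullet A_x$ becomes, using $I=A_{\epsilon^\dag}$ and part (2), the pair $x\bullet\epsilon^\dag=0$ and $\epsilon^\dag\bullet x=0$; but these two are equivalent, because $x=x^*$ gives $A_x=A_x^*$, hence by \eqref{eq.bulletinv} (together with $I=I^*$) we get $I\bullet A_x=(A_x\bullet I)^*$. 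So the single equation $x\bullet\epsilon^\dag=0$ suffices. Finally, $A_x$ is undirected iff $A_x=A_x^\dag=A_{x^\star}$ by part (3), i.e.\ iff $x=x^\star$.

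I do not expect any genuine obstacle here; the entire content sits in the preceding Proposition. The two small points to keep straight are: (i) that the a priori two-sided no-loop condition collapses to the single equation $x\bullet\epsilon^\dag=0$ once self-adjointness of $A_x$ is available; and (ii) reading the phrase ``projection in $C(\Gamma)$'' with respect to the correct product $\bullet$ rather than the convolution, since in this paper the symbol $*$ does double duty for the convolution product and for the involution.
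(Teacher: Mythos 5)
Your proof is correct and follows exactly the route the paper intends: the corollary is stated without proof precisely because each condition transports through the preceding Proposition, and your observation that $A_x\epsilon^\dag=x$ makes the injectivity (and hence the ``only if'' directions) explicit. The two side remarks --- collapsing the two-sided no-loop condition via $(A_x\bullet I)^*=I\bullet A_x^*$ once $x=x^*$ is known, and reading ``projection'' with respect to $\bullet$ and ${}^*$ --- are both right and worth having spelled out.
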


See also \cite{Was23} for an analogous result formulated in the more general framework of discrete (possibly infinite) quantum groups.

\begin{cor}
Any finite quantum group $\Gamma$ defines a homogeneous quantum coherent algebra $\Alg=\{A_x\mid x\in l^2(\Gamma)\}$, which is isomorphic to $(l^2(\Gamma),\bullet,*,{}^*,{}^\star)$.
\end{cor}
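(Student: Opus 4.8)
The plan is to exhibit the linear map $\Phi\colon l^2(\Gamma)\to\Lin(l^2(\Gamma))$, $x\mapsto A_x=\Delta^\dag(x\otimes\id)$, as an isomorphism onto its image and to check that the image satisfies Definition~\ref{D.Qcoherent}. First I would observe that $\Phi$ is visibly linear, so $\Alg:=\{A_x\mid x\in l^2(\Gamma)\}$ is genuinely a linear subspace of $\Lin(l^2(\Gamma))$ (with $A_x+A_y=A_{x+y}$, $cA_x=A_{cx}$). The preceding proposition then yields all the closure properties for free: $A_xA_y=A_{x*y}$ gives closure under composition, $A_x\bullet A_y=A_{x\bullet y}$ under the Schur product, $A_x^\dag=A_{x^\star}$ under $\dag$, and $A_x^*=A_{x^*}$ under $*$.

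Next I would locate the two distinguished elements inside $\Alg$. For the composition unit: the counit axiom $(\epsilon\otimes\id)\Delta=\id$, upon applying $\dag$, becomes $\Delta^\dag(\epsilon^\dag\otimes\id)=\id$, i.e.\ $A_{\epsilon^\dag}=I\in\Alg$. For $J=\eta\eta^\dag$: left-invariance of the Haar functional $\psi=\eta^\dag$ says $(\psi\otimes\id)\Delta=\psi(\cdot)\eta$, which as a composition of maps reads $(\eta^\dag\otimes\id)\Delta=\eta\eta^\dag=J$; applying $\dag$ and using $J^\dag=J$ gives $A_\eta=\Delta^\dag(\eta\otimes\id)=J\in\Alg$. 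Hence $\Alg$ contains both $I$ and $J$ and is a quantum coherent algebra. (This also explains, via $A_x\bullet A_y=A_{x\bullet y}$, why $A_\eta$ is the Schur unit, since $\eta$ is the $\bullet$-unit of $C(\Gamma)$.)

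Finally, for the isomorphism statement: $\Phi$ is onto $\Alg$ by definition, and it is injective because $A_x=0$ forces $x=x*\epsilon^\dag=A_x\epsilon^\dag=0$, where $x*\epsilon^\dag=x$ is precisely the $\dag$ of the remaining counit axiom $(\id\otimes\epsilon)\Delta=\id$. Thus $\Phi$ is a linear bijection, and the four identities of the proposition say exactly that $\Phi$ carries the product $*$ of $l^2(\Gamma)$ to composition in $\Alg$, the product $\bullet$ to the Schur product, the involution ${}^\star$ to $\dag$, and ${}^*$ to ${}^*$; this is the asserted isomorphism of $(l^2(\Gamma),\bullet,*,{}^*,{}^\star)$ with $\Alg$. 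The only steps that are more than bookkeeping are the two identifications $I=A_{\epsilon^\dag}$ and $J=A_\eta$, and of these the latter is the one genuinely using that $\eta^\dag=\psi$ is an integral; I expect that to be the single point requiring care, everything else following formally from the previous proposition.
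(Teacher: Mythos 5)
Your proposal is correct and follows the route the paper intends: the corollary is stated without proof precisely because the four identities of the preceding proposition do all the work, and your identifications $I=A_{\epsilon^\dag}$ (dagger of the counit axiom) and $J=A_\eta$ (dagger of left-invariance of the Haar functional $\eta^\dag$), together with injectivity via $A_x\epsilon^\dag=x$, are exactly the remaining details. Nothing is missing.
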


Hence, any finite quantum group $\Gamma$ can be considered as a quantum association scheme generalizing the concept of a group scheme. We will denote it by $\Ass_\Gamma$.

\begin{defn}
Let $\Gamma$ be a finite quantum group. A \emph{quantum Schur ring} is any vector space $S\subset l^2(\Gamma)$, which is closed under all the operations $\bullet,*,{}^*,{}^\star$ and contains both units $\eta$ and $\epsilon^\dag$. That is, it is a unital involutive subalgebra of both $C(\Gamma)$ and $\C\Gamma$. The quantum association scheme $\Ass$ corresponding to a quantum coherent subalgebra $\Alg\subset\spanlin\Ass_\Gamma$ is called a \emph{quantum translation association scheme}.
\end{defn}

By construction, a quantum Schur ring $S$ induces a quantum translation association scheme $\Ass$ by $\spanlin\Ass=\{A_x\mid x\in S\}$ and vice versa. That is, $x\mapsto A_x$ is a functor providing the equivalence between the category of quantum Schur rings and the category of quantum translation association schemes.

Now, recall Proposition \ref{P.FourierOps}, stating that the Fourier transform exchanges the operations $*$ and $\bullet$ and the involutions $\star$ and $*$ in $l^2(\Gamma)$. This obviously restricts to any quantum Schur ring $S\subset l^2(\Gamma)$. By the functor $x\mapsto A_x$ this passes also to the corresponding quantum translation association schemes. To summarize:

\begin{thm}
\label{T}
Let $\Gamma$ be a finite quantum group and $\Ass$ a quantum translation association scheme over $\Gamma$ corresponding to a Schur ring $S\subset l^2(\Gamma)$. Then the translation association scheme $\hat\Ass$ given by
$$\spanlin\hat\Ass=\{\hat A\mid A\in\spanlin\Ass\}=\{A_{\hat x}\mid x\in S\}$$
is dual to $\Ass$.
\end{thm}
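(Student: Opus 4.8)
The plan is to realize the duality map of Definition~\ref{D.dual} as a suitably normalized version of the Fourier transform $\widehat{\,\cdot\,}$ itself. Everything rests on combining two facts established above. The first is the Proposition just proved, which — being stated over an arbitrary finite quantum group — applies both to $\Gamma$ and to $\hat\Gamma$: it shows that $x\mapsto A_x$ turns the operations $*,\bullet,{}^\star,{}^*$ on $l^2(\Gamma)$ into composition, Schur product, $\dag$ and $*$ on $\spanlin\Ass_\Gamma$, and in particular restricts to an isomorphism of the Schur ring $S$ onto the coherent algebra $\spanlin\Ass$, and likewise of $\hat S:=\Phi(S)$ onto $\spanlin\hat\Ass$. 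The second is Proposition~\ref{P.FourierOps}, which says the Fourier transform $\Phi\colon l^2(\Gamma)\to l^2(\hat\Gamma)$ interchanges $*$ with $\bullet$ and ${}^\star$ with ${}^*$, the only bookkeeping being the scalar $n$ in $\widehat{x\bullet y}=n\,\hat x*\hat y$. Chaining $\spanlin\Ass\cong S\xrightarrow{\ \Phi\ }\hat S\cong\spanlin\hat\Ass$ then yields the candidate map $\Phi_{\rm dual}\colon A_x\mapsto A_{\hat x}$.

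Before that, one should check that $\hat S$ is a quantum Schur ring in $l^2(\hat\Gamma)$, so that $\hat\Ass$ is genuinely a quantum translation association scheme over $\hat\Gamma$; the identification $\spanlin\hat\Ass=\{\hat A\mid A\in\spanlin\Ass\}=\{A_{\hat x}\mid x\in S\}$ then just records that $\widehat{\,\cdot\,}$ is functorial for the weighted-Cayley-graph construction. That $\hat S$ is a quantum Schur ring is immediate from Proposition~\ref{P.FourierOps}: since $S$ is a subspace closed under $*,\bullet,{}^*,{}^\star$, its image $\hat S$ is a subspace closed under $\bullet,*,{}^\star,{}^*$ of $\hat\Gamma$ (the stray scalar $n$ does not affect closure of a subspace), and since $S$ contains the units $\eta$ and $\epsilon^\dag$ of $l^2(\Gamma)$, $\hat S$ contains their images, which up to scalars are the two units $\epsilon^\dag$ and $\eta$ of $l^2(\hat\Gamma)$.

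It then remains to verify the identities of Definition~\ref{D.dual} for $\Phi_{\rm dual}$, and each reduces to a short chain of the two Propositions, e.g.
$$\Phi_{\rm dual}(A_xA_y)=\Phi_{\rm dual}(A_{x*y})=A_{\widehat{x*y}}=A_{\hat x\bullet\hat y}=A_{\hat x}\bullet A_{\hat y},\qquad \Phi_{\rm dual}(A_x\bullet A_y)=A_{\widehat{x\bullet y}}=A_{n\,\hat x*\hat y}=n\,A_{\hat x}A_{\hat y},$$
and similarly $\Phi_{\rm dual}(A_x^\dag)=A_{\widehat{x^\star}}=A_{\hat x^*}=A_{\hat x}^*$ and $\Phi_{\rm dual}(A_x^*)=A_{\widehat{x^*}}=A_{\hat x^\star}=A_{\hat x}^\dag$, so the two involution conditions come out for free. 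The obstacle I foresee is purely the bookkeeping of the constant $n$: Definition~\ref{D.dual} places it in $\Phi(xy)=n\,\Phi(x)\bullet\Phi(y)$, whereas the naive chains above place it in the $\bullet$-to-composition direction, and there is in addition the renormalization of the Haar functional of $\hat\Gamma$ by $\tfrac1n$ — hence of the inner product on $l^2(\hat\Gamma)$, of the operator Schur product on $\Lin(l^2(\hat\Gamma))$, and of $J_{\hat\Gamma}=\eta_{\hat\Gamma}\eta_{\hat\Gamma}^\dag$. One has to assemble these factors (possibly after rescaling $\Phi_{\rm dual}$ or composing with the canonical $J\mapsto I$, $I\mapsto\tfrac1nJ$ correspondence) so that Definition~\ref{D.dual} holds on the nose; this normalization check is the crux of the argument, everything else being formal.
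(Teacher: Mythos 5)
Your argument is essentially identical to the paper's: Theorem~\ref{T} is presented there as a direct summary of the proposition on weighted Cayley graphs ($A_xA_y=A_{x*y}$, $A_x\bullet A_y=A_{x\bullet y}$, $A_x^\dag=A_{x^\star}$, $A_x^*=A_{x^*}$) combined with Proposition~\ref{P.FourierOps}, which are exactly the two ingredients you chain together via $A_x\mapsto A_{\hat x}$. The one point you flag as the crux --- where the factor $n$ from $\widehat{x\bullet y}=n\,\hat x*\hat y$ and from the renormalized Frobenius structure on $\hat\Gamma$ lands relative to the conventions of Definition~\ref{D.dual} --- is a genuine bookkeeping issue, but the paper does not carry out that verification either, so your write-up is, if anything, more explicit about what remains to be checked than the published argument.
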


If it is clear from context that we consider $\Ass$ as a quantum translation association scheme over a particular quantum group, we can call the above defined $\hat\Ass$ \emph{the} dual of $\Ass$. Nevertheless, note that the quantum group $\Gamma$ may not be defined uniquely; that is, $\Ass$ can be considered as a quantum translation association scheme over various different quantum groups and hence can have various different duals. An example is the complete bipartite graph discussed in Examples \ref{E.Knn},~\ref{E.Knn2}.

Now, let us reformulate the theorem to the particular case when $\Gamma$ is an ordinary non-abelian group.

\begin{prop}
\label{P}
Let $\Gamma$ be a (possibly non-abelian) finite group. Let $\Ass=\{A_i\}_{i=0}^d$ be a commutative translation association scheme over $\Gamma$ and denote by $\{\Gamma_i\}_{i=0}^d$ the corresponding partition of $\Gamma$. For any $g\in\Gamma$, denote by $i(g)$ the index of the corresponding part, so $g\in\Gamma_{i(g)}$. Denote also by $\{E_j\}$ the dual basis of $\Ass$ and by $Q$ the dual eigenvalue matrix of $\Ass$. Then the dual $\hat\Ass$ is a CCQAS given by $\hat\Ass=\{B_j\}_{j=0}^d$ with $B_j=n\hat E_j$ acting by
$$B_j\hat e_g=Q_{i(g)j}\hat e_g.$$
\end{prop}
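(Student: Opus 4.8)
The plan is to derive the statement from Theorem~\ref{T} together with the structure theory of the dual basis developed in Section~\ref{secc.CCQAS}, specialised to the dual quantum group $\hat\Gamma$ with $C(\hat\Gamma)\simeq\C\Gamma$. First I would check that $\hat\Ass$ is a CCQAS. Since $\Gamma$ is a classical group, $\Alg:=\spanlin\Ass$ is an algebra of honest $0$–$1$ matrices, hence automatically commutative for the Schur product, and it is commutative for the composition by hypothesis. By Theorem~\ref{T} the algebra $\hat\Alg:=\spanlin\hat\Ass$ is dual to $\Alg$ in the sense of Definition~\ref{D.dual}, and since that duality swaps the composition with the Schur product, $\hat\Alg$ is again commutative and cocommutative. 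By Theorem~\ref{T.AlgAss} it is therefore the Bose--Mesner algebra of a CCQAS, unique up to the order of its elements, whose elements are exactly the minimal Schur projections of $\hat\Alg$.

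Next I would identify those minimal Schur projections. By Theorem~\ref{T.AssDual}, $\Alg$ carries the dual basis $\{E_0,\dots,E_d\}$ of minimal composition projections with $E_j=E_j^\dagger$. Applying the relations of Definition~\ref{D.dual} for the duality $\Alg\to\hat\Alg$ to $E_jE_j=E_j$ and $E_j^\dagger=E_j$ shows that the image $\hat E_j$ of $E_j$ satisfies $\hat E_j\bullet\hat E_j=\tfrac1n\hat E_j$ and $\hat E_j^{\,*}=\hat E_j$, i.e.\ $B_j:=n\hat E_j$ is a self-$*$ Schur idempotent; since the duality is, up to the scalar $n$, an algebra isomorphism from $(\Alg,\circ)$ onto $(\hat\Alg,\bullet)$, it carries the \emph{minimal} composition projections $E_j$ bijectively onto the \emph{minimal} Schur projections of $\hat\Alg$. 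Hence the adjacency basis of $\hat\Ass$ is $\{B_0,\dots,B_d\}$ with $B_j=n\hat E_j$, and one reads off the remaining CQAS axioms (in particular $A_0=I$ and $\sum_j B_j=J$) from the corresponding properties of the $E_j$ listed in Theorem~\ref{T.AssDual}.

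It then remains to compute the action of $B_j$ on the canonical basis $\{\hat e_g\}_{g\in\Gamma}$ of $l^2(\hat\Gamma)$. Substituting the defining relation $E_j=\tfrac1n\sum_s Q_{sj}A_s$ of the dual eigenvalue matrix and applying the Fourier transform gives $B_j=\sum_s Q_{sj}\hat A_s$, so it is enough to understand each $\hat A_s$. Here one uses that $\Ass$ is a \emph{translation} scheme: $A_s$ is the weighted Cayley graph $A_{\alpha_s}$ with $\alpha_s=\sum_{g\in\Gamma_s}e_g$, so by functoriality of the Fourier transform together with Proposition~\ref{P.FourierOps} the operator $\hat A_s$ becomes diagonal in the basis $\{\hat e_g\}$, with eigenvalue on $\hat e_g$ equal to $\mathbf 1[g\in\Gamma_s]$. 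Summing over $s$ the sum collapses to the single index $s=i(g)$, giving $B_j\hat e_g=Q_{i(g)j}\hat e_g$.

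The first two steps are formal consequences of the quoted theorems. The real work — and the step I expect to be the main obstacle — is the third one: one has to keep strict account of the several powers of $n$ introduced by the non-unitary Fourier transform (Propositions~\ref{P.dagdual}, \ref{P.FourierOps}), and, crucially, to verify that the canonical basis $\{\hat e_g\}$ of $l^2(\hat\Gamma)$ is precisely the one simultaneously diagonalising all the transported translation-invariant operators $\hat A_s$, each $\hat A_s$ reading off the indicator of the part $\Gamma_s$. Once this diagonalisation is in place, the identity for $B_j\hat e_g$ and the fact that the $B_j$ form the adjacency basis are immediate.
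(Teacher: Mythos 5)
Your proposal is correct and follows essentially the same route as the paper: establish that $\hat\Ass$ is a CCQAS, identify $B_j=n\hat E_j$ as the basis of self-$*$ Schur idempotents via the Fourier transform exchanging the two products, and then diagonalise using $\hat\alpha_s*\hat e_g=\mathbf 1[g\in\Gamma_s]\,\hat e_g$ so that the sum over $s$ collapses to $s=i(g)$. The only cosmetic difference is that you deduce the idempotency of $B_j$ from the abstract duality relations of Definition~\ref{D.dual}, whereas the paper computes it directly at the level of the Schur-ring elements $\epsilon_j=\frac1n\sum_sQ_{sj}\alpha_s$ and the functor $x\mapsto A_x$ — but that is exactly the machinery underlying Theorem~\ref{T}, so the content is the same.
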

\begin{proof}
The fact that $\hat\Ass$ is commutative and cocommutative is clear. The only thing we need to prove is that $B_j$ given as above is indeed the basis of Schur idempotents.

By assumptions, we have $A_i=A_{\alpha_i}$ with $\alpha_i=\sum_{g\in\Gamma_i}e_g$. The dual basis is then given by $E_j=\frac{1}{n}\sum_{s=0}^d Q_{sj}A_s$, so we can also write $E_j=A_{\epsilon_j}$ with $\epsilon_j=\frac{1}{n}\sum_{s=0}^d Q_{sj}\alpha_s$ satisfying $\epsilon_j*\epsilon_j=\epsilon_j$ and $\epsilon_j^\star=\epsilon_j$. Now, we can put $B_j=\hat E_j=A_{\hat\epsilon_j}$. Then these satisfy
$$B_j\bullet B_j=A_{\hat\epsilon_j\bullet\hat\epsilon_j}=A_{\widehat{\epsilon_j*\epsilon_j}}=A_{\hat\epsilon_j}=B_j,$$
$$B_j^*=A_{\hat\epsilon_j^*}=A_{\widehat{\epsilon_j^\star}}=A_{\hat\epsilon_j}=B_j.$$

Finally, note that $\hat e_h*\hat e_g=n\,\widehat{e_h\bullet e_g}=n\delta_{gh}\hat e_g$, so $\hat\alpha_i*\hat e_g$ equals to $n\hat e_g$ if $g\in\Gamma_i$ and zero otherwise. This leads to the formula
\[B_j\hat e_g=\hat\epsilon_j*\hat e_g=\frac{1}{n}\sum_{s=0}^d Q_{sj}(\hat\alpha_s*\hat e_g)=Q_{i(g)j}\hat e_g.\qedhere\]
\end{proof}

\begin{rem}
\label{R.Bannai}
As we mentioned in the introduction, some sort of general duality statement for arbitrary association schemes was formulated already in \cite{Ban82}. So, let us rephrase the result of Bannai in our language.

Take arbitrary association scheme $\Ass=\{A_0,\dots,A_d\}$ and consider its Bose--Mesner algebra $\Alg=\spanlin\Ass$. We can consider the dual vector space $\Alg^*$ consisting of all linear functionals $\Alg\to\C$. Since $A_0,\dots,A_d$ is a basis of $\Alg$, we can clearly identify $\Alg^*$ with $C(\Ass)$. By this, $\Alg^*$ gets an algebra structure. Since everything is finite-dimensional, we can further identify $C(\Ass)$ with $\Alg$ as in Example~\ref{E.CGCG}, so that the mentioned algebraic structure in $\Alg^*\simeq C(\Ass)$ corresponds to the Schur product in $\Alg$.

As for the other product (the composition) in $\Alg$, Bannai essentially implicitly observes that it can be encoded in $\Alg^*\simeq C(\Ass)$ via the comultiplication $f\mapsto\Delta(f)$ with $\Delta(f)(a,b)=f(ab)$ for $a,b\in\Alg$ and $f\in\Alg^*$. The main theorem then says that if we take the second dual $\Alg^{**}$, which is obviously linearly isomorphic to $\Alg$, we can reconstruct the composition product via $\phi\cdot\psi=(\phi\otimes\psi)\circ\Delta$ for $\phi,\psi\in\Alg^{**}$.

But this is well known from the theory of coalgebras: The dual of every finite-dimensional algebra has a natural coalgebra structure. Taking the dual second time, we obtain an algebra, which is isomorphic to the original one.

Our result is also based on a well-known fact from the theory of Hopf algebras, namely that the dual of a Hopf $*$-algebra is again a Hopf $*$-algebra (and the second dual is again isomorphic to the original one). But we believe that it is much more interesting as it actually gives a structure of a quantum association scheme to the dual. That is, we start with a quantum association scheme $\Ass$ and we construct a \emph{new} quantum association scheme $\hat\Ass$. This is not the case for the Bannai's result.

On the other hand, the result of Bannai is more general as it works for general association schemes, while our works only for translation (quantum) association schemes. This is natural as our result relies on some underlying Hopf $*$-algebra, i.e.\ a (quantum) group.
\end{rem}

\begin{ex}\label{E.Knn2}
Consider the dihedral group $D_n=\{r^k,sr^k\mid k=0,\dots,n-1\}$, where we denote by $r$ the elementary rotation and by $s$ the reflection. Consider the generating set $S=\{sr^k\mid k=0,\dots,n-1\}$. Then the corresponding Cayley graph is the complete bipartite graph $K_{n,n}$ as in Example~\ref{E.Knn}. This time, the corresponding partition of $\Gamma$ is given by
$$\Gamma_0=\{e\},\quad\Gamma_1=\{sr^k\mid k=0,\dots,n-1\},\quad\Gamma_2=\{r^k\mid k=1,\dots,n-1\}.$$

We can view the corresponding classical association scheme also as a quantum association scheme with $\spanlin\Ass=\spanlin\{A_i\mid i=0,1,2\}$, $A_i=A_{\alpha_i}$ and
$$\alpha_0=e,\qquad\alpha_1=s+sr+\cdots+sr^{n-1},\qquad\alpha_2=r+r^2+\cdots+r^{n-1}.$$
Here, we decided to simplify the notation and write $g$ instead of $e_g$ for the elements of $l^2(D_n)$.

Let us stress that regardless of the way we constructed the association scheme, it literally equals to the association scheme from Example~\ref{E.Knn}. Nevertheless, the construction of the dual will already depend on the underlying group.

In this case, the dual is a quantum association scheme (actually a strongly regular quantum graph) over the quantum space $\hat D_n$ with $C(\hat D_n)\simeq\C D_n$ being non-commutative. Based on the second column of the dual eigenvalue matrix $Q$ given in Example~\ref{E.Knn}, we can express the adjacency matrix $\hat A\colon\C D_n\to\C D_n$ as
\begin{align*}
e&\mapsto (2n-2)e,\\
r^i&\mapsto -2r^i&&\text{for $i=1,\dots,n-1$,}\\
sr^i&\mapsto 0   &&\text{for $i=0,\dots,n-1$}.
\end{align*}

It might be interesting to reveal the explicit structure of this graph. Note that
$$\C D_n=\bigoplus_{\pi\in\Irr D_n}M_{n(\pi)}(\C)=\C\oplus\C({}\oplus\C\oplus\C)\oplus\underbrace{M_2(\C)\oplus\cdots\oplus M_2(\C)}_{\text{$\lfloor(n-1)/2\rfloor$ times}}$$

The group $D_n$ has the following set of inequivalent irreducible representations: It has the trivial representation $\epsilon$, the representation $\tau$ given by $\tau(r)=1$, $\tau(s)=-1$, if $n$ is even, then it also has the representation $\sigma$ given by $\sigma(r)=-1$, $\tau(s)=1$ and the product $\sigma\tau$, and it has the defining two-dimensional representation $\rho$ given by rotations and reflections in the plane and its powers $\rho^k$, $k=1,\dots,\lfloor\frac{n-1}{2}\rfloor$.

Expressing the adjacency matrix $\hat A$ in the basis $\epsilon$, $\tau$, ($\sigma$, $\sigma\tau$,) $\frac{1}{\sqrt2}e_{11}^{(k)}$, $\frac{1}{\sqrt2}e_{12}^{(k)}$, $\frac{1}{\sqrt2}e_{21}^{(k)}$, $\frac{1}{\sqrt2}e_{22}^{(k)}$, $k=1,\dots,\lfloor\frac{n-1}{2}\rfloor$, we reveal some kind of quantum cocktail party structure again.

To include some concrete computation, we can do this for $n=8$. In this case, $\hat A$ in this basis looks as follows
$$\hat A=
\begin{pmatrix}
0&0&1&1&\sqrt2&0&0&\sqrt2\\
0&0&1&1&\sqrt2&0&0&\sqrt2\\
1&1&0&0&\sqrt2&0&0&\sqrt2\\
1&1&0&0&\sqrt2&0&0&\sqrt2\\
\sqrt2&0&0&\sqrt2&1&0&0&1\\
\sqrt2&0&0&\sqrt2&0&-1&1&0\\
\sqrt2&0&0&\sqrt2&0&1&-1&0\\
\sqrt2&0&0&\sqrt2&1&0&0&1\\
\end{pmatrix}
$$
Note that the right-bottom corner corresponds to the \emph{anticommutative square graph} from \cite[Example~1.20]{GQGraph}. Recall in particular that the anticommutative square is quantum isomorphic to the classical square graph, which can be also viewed as a cocktail party graph $K_{2,2}$. 

For general $n$, one can obtain a similar structure that can be described as follows. The quantum graph consists of $\lfloor\frac{n-1}{2}\rfloor$ copies of the anticommutative square and either the classical $K_{2,2}$ (if $n$ is even) or two classical unconnected vertices (if $n$ is odd). Otherwise everything is connected to everything. In other words, it is the complement of the quantum graph given by one or two copies of the classical $K_2$ and $\lfloor\frac{n-1}{2}\rfloor$ copies of the unique quantum graph on $M_2(\C)$ with the ``number of directed edges'' $\eta^\dag A\eta=4$ (which is actually quantum isomorphic to $K_2\sqcup K_2$, see again \cite{GQGraph}).
\end{ex}

\subsection{Quantum Latin squares}
\label{secc.Latin}

We finish the article by introducing another way of constructing strongly regular quantum graphs -- we generalize the concept of Latin squares. Let us first recall the classical setting.

A Latin square is an $n\times n$ matrix $L$ with entries in some $n$-element set $X$ such that each row and each column is some permutation of $X$. An example is the Cayley table of any group $\Gamma$. Indeed, if we define $L_{gh}=gh$, then each row and each column contains all elements of the group exactly once. In general, Latin squares are in bijection with finite quasigroups (a quasigroup is a set $X$ equipped with a possibly non-associative multiplication with unique division).

For any Latin square $L$, we can define a strongly regular graph on $n^2$ vertices corresponding to the entries of the Latin square. Two vertices $(x_1,y_1)$ and $(x_2,y_2)$ in such a graph are connected if and only if $x_1=x_2$ or $y_1=y_2$ or $L_{x_1y_1}=L_{x_2y_2}$. The parameters of such a strongly regular graph are given as $(n^2,3(n-1),n,6)$.

In particular, taking the Cayley table as the Latin square, the associated strongly regular graph is a Cayley graph of the group $\Gamma^{\rm op}\times\Gamma$ with respect to the generating set $S=\{(e,g),(g,e),(g^{-1},g)\mid g\in\Gamma\}$. Here, we denote by $\Gamma^{\rm op}$ the group with the same underlying set as $\Gamma$, but with opposite multiplication $g\cdot^{\rm op}h=hg$. Note that we could also use the isomorphism $\Gamma^{\rm op}\to\Gamma$, $g\mapsto g^{-1}$ and observe that the graph is also a Cayley graph of $\Gamma\times\Gamma$ with respect to $S=\{(e,g),(g,e),(g,g)\mid g\in\Gamma\}$.

Now, we are going to quantize this concept.

\begin{defn}
Let $X$ and $Y$ be finite quantum spaces. We define a new finite quantum space $X\times Y$ by $C(X\times Y)=C(X)\otimes C(Y)$. Here, $\otimes$ denotes the ordinary tensor product of C*-algebras, where the multiplication and involution is taken entrywise, so $(x_1\otimes y_1)(x_2\otimes y_2)=x_1x_2\otimes y_1y_2$ and $(x\otimes y)^*=x^*\otimes y^*$.
\end{defn}

\begin{rem}
There is an important subtlety to observe in the above definition. In the preliminary section on finite quantum spaces, we studied for any finite quanatum space $X$ the tensor powers $l^2(X)^{\otimes k}$. Here, we cosidered the bilinear form $\beta_k$, which induced the notions of categorical transposition $\Ctrans$ and conjugation $*$ for tensors $A\colon l^2(X)^{\otimes k}\to l^2(X)^{\otimes l}$. In particular, for $x,y\in l^2(X)$, we had $(x\otimes y)\Ctrans=y\Ctrans\otimes x\Ctrans$ and $(x\otimes y)^*=y^*\otimes x^*$.

However, considering the space $X\times X$, we have $l^2(X\times X)\simeq l^2(X)\otimes l^2(X)$ as vector spaces, but all the operations are defined entrywise. In particular, taking $x,y\in l^2(X)$ and considering $x\otimes y$ as an element of $l^2(X\times X)$, we have $(x\otimes y)\Ctrans=x\Ctrans\otimes y\Ctrans$ and $(x\otimes y)^*=x^*\otimes y^*$.
\end{rem}

\begin{defn}\label{D.qLatin}
Let $X$ be a finite quantum space. A quantum Latin square on $X$ is a linear map $L\colon l^2(X)\otimes l^2(X)\to l^2(X)$ such that the following holds.
\begin{enumerate}
\item
$
\Diagram{\DMor{square}1/2 (1,0.5) {$\scriptstyle L^\dag$}}
=
\Diagram{\DMor{square}2/1 (2,0.5) {$\scriptstyle L$}
         \Dmor{bcirc}2/0 (1.5,2) \Dmor{bcirc}[/-0-] (2.5,-1) \Dmor{bcirc}[/-0-] (3.5,-1)
         \draw (1,1.5) -- (1,-1); \draw (3.5,-.5) -- (3.5,2); \draw (4.5,-.5) -- (4.5,2);}
$.
\item
$
\Diagram{\DMor{square}2/1 (1,0) {$\scriptstyle L$}
         \Dmor{bcirc}1/2 (1,1.5)}
=
\Diagram{\Dmor{bcirc}[-/-0] (1,-0.5) \Dmor{bcirc}[-/0-] (2,-0.5)
         \draw (1,-0.5) -- (2,1); \draw (2,-0.5) -- (1,1);
         \DMor{square}[-0/-] (1,1) {$\scriptstyle L$} \DMor{square}[0-/-] (2,1) {$\scriptstyle L$}}
$,\quad
$
\Diagram{\DMor{square}2/1 (1,0.5) {$\scriptstyle L$} \Dmor{bcirc}0/0 (1,1.5)}
=
\Diagram{\Dmor{bcirc}1/0 (1,0) \Dmor{bcirc}1/0 (2,0)}
$
.
\item
$
\Diagram{\DMor{square}2/1 (1,0.5) {$\scriptstyle L$} \Dmor{bcirc}0/0 (0.5,-0.5)}
=
\Diagram{\Dmor{bcirc}1/0 (1,0) \Dmor{bcirc}0/1 (1,1)}
$,\quad
$
\Diagram{\DMor{square}2/1 (1,0.5) {$\scriptstyle L$} \Dmor{bcirc}0/0 (1.5,-0.5)}
=
\Diagram{\Dmor{bcirc}1/0 (1,0) \Dmor{bcirc}0/1 (1,1)}$
\end{enumerate}
\end{defn}

To explain the definition, note the following: Axiom (1) means that $L=L^*$ taken as a map $l^2(X\times X)\to l^2(X)$. Axiom (2) equivalently says that $L^\dag$ is a unital homomorphism $C(X)\otimes C(X)\to C(X)$. Together with axiom (1) it means that it is a unital $*$-homomorphism. So, we have something like a finite quantum group (a finite-dimensional Hopf $*$-algebra) with $L$ playing the role of comultiplication except that there is no counit and the comultiplication need not be coassociative. Finally, the third condition is supposed to mean that $L$ as a multiplication admits a unique division. The intuition here could be that $\eta$ is like a ``superposition of all possible inputs''. If we fix one of the inputs $x $and put $\eta$ for the other, we should get ``all possible outputs'' again regardless of $x$. We make this more precise in the following statement:

\begin{prop}
\label{P.Galois}
Let $X$ be a finite quantum space and consider a linear map $L\colon l^2(X)\otimes l^2(X)\to l^2(X)$ satisfying axioms (1) and (2) of Definition~\ref{D.qLatin}. Denote
$$L_r=(L\otimes\id)(\id\otimes m^\dag)=
\Diagram{\DMor{square}2/1 (1,1) {$\scriptstyle L$} \Dmor{bcirc}1/2 (2,-0.5) \draw (0.5,0) -- (0.5,-1); \draw (2.5,0) -- (2.5,2);},
\qquad
  L_l=(\id\otimes L)(m^\dag\otimes\id)=
\Diagram{\DMor{square}2/1 (2,1) {$\scriptstyle L$} \Dmor{bcirc}1/2 (1,-0.5) \draw (2.5,0) -- (2.5,-1); \draw (0.5,0) -- (0.5,2);}.
$$
Then axiom (3) of Definition~\ref{D.qLatin} is equivalent to saying that $L_r$ and $L_l$ are unitary. That is,
$$
\Diagram{\Dmor{bcirc}1/2 (1.5,0.75) \Dmor{bcirc}2/1 (1.5,0.25)
         \DMor{square}[-0/-] (1,1.5) {$\scriptstyle L$} \DMor{square}[-/-0] (1,-.5) {$\scriptstyle L^\dag$}
         \draw (2,1.25) -- (2,2.5); \draw (2,-.25) -- (2,-1.5);}
=\Diagram{\draw (1,-1.5) -- (1,2.5); \draw (2,-1.5) -- (2,2.5);}=
\Diagram{\Dmor{bcirc}1/2 (1.5,-1) \Dmor{bcirc}2/1 (1.5,2)
         \DMor{square}[-0/-] (1,-0.25) {$\scriptstyle L$} \DMor{square}[-/-0] (1,1.25) {$\scriptstyle L^\dag$}
         \draw (2,-0.5) -- (2,1.5); \draw (0.5,2.25) -- (0.5,2.5); \draw (0.5,-1.25) -- (0.5,-1.5);}
,\qquad
\Diagram{\Dmor{bcirc}1/2 (1.5,0.75) \Dmor{bcirc}2/1 (1.5,0.25)
         \DMor{square}[0-/-] (2,1.5) {$\scriptstyle L$} \DMor{square}[-/0-] (2,-.5) {$\scriptstyle L^\dag$}
         \draw (1,1.25) -- (1,2.5); \draw (1,-.25) -- (1,-1.5);}
=\Diagram{\draw (1,-1.5) -- (1,2.5); \draw (2,-1.5) -- (2,2.5);}=
\Diagram{\Dmor{bcirc}1/2 (1.5,-1) \Dmor{bcirc}2/1 (1.5,2)
         \DMor{square}[0-/-] (2,-0.25) {$\scriptstyle L$} \DMor{square}[-/0-] (2,1.25) {$\scriptstyle L^\dag$}
         \draw (1,-0.5) -- (1,1.5); \draw (2.5,2.25) -- (2.5,2.5); \draw (2.5,-1.25) -- (2.5,-1.5);}.
$$
\end{prop}
\begin{proof}
First, suppose that $L$ satisfies Def.~\ref{D.qLatin}. We will prove the unitarity just for $L_r$, the proof for $L_l$ is the same. Since $l^2(X)$ is finite-dimensional, it is enough to prove the first equality.

Composing the first equality in axiom (2) with $\eta\otimes I$, we obtain
$$\Diagram{\Dmor{bcirc}0/2 (1,1) \Dmor{bcirc}1/0 (1,0)}=
\Diagram{\Dmor{bcirc}[/-0-] (1.5,-.5) \Dmor{bcirc}[-/-0-] (2.5,-.5)
         \DMor{square}2/1 (1,1) {$\scriptstyle L$} \DMor{square}2/1 (3,1) {$\scriptstyle L$}}$$
So,
$$
\Diagram{\Dmor{bcirc}1/2 (1.5,0.75) \Dmor{bcirc}2/1 (1.5,0.25)
         \DMor{square}[-0/-] (1,1.5) {$\scriptstyle L$} \DMor{square}[-/-0] (1,-.5) {$\scriptstyle L^\dag$}
         \draw (2,1.25) -- (2,2.5); \draw (2,-.25) -- (2,-1.5);}=
\Diagram{\Dmor{bcirc}[/-0-] (1.5,-.5) \Dmor{bcirc}[0/-0-] (2.5,-.5) \Dmor{bcirc}2/0 (3.5,2.5)
         \DMor{square}2/1 (1,1) {$\scriptstyle L$} \DMor{square}2/1 (3,1) {$\scriptstyle L$}
         \draw (4,2) -- (4,-1.5); \draw (5,2.5) -- (5,-1.5); \Dmor{bcirc}[/-00000] (5,-1)}
=\Diagram{\draw (1,-1.5) -- (1,2.5); \draw (2,-1.5) -- (2,2.5);}.$$

For the converse, observe that by $L^\dag$ being unital (axiom (2)), we have
$$L_r^\dag(\eta\otimes x)=
\Diagram{\DMor{square}1/2 (1,0.5) {$\scriptstyle L^\dag$} \Dmor{bcirc}2/1 (2,2) \Dmor{bcirc}0/1 (1,-1) \DMor{vec}0/1 (2.5,0.5) {$\scriptstyle x$}}=
\Diagram{\DMor{bcirc}0/1 (1,0) {} \DMor{vec}0/1 (2,0) {$\scriptstyle x$}}=
\eta\otimes x.$$
By unitarity of $L_r$, this means that $L_r(\eta\otimes x)=\eta\otimes x$, so $L_r(\eta\otimes I)=\eta\otimes I$. Finally, composing from left with $I\otimes\eta^\dag$, we get exactly
$$
\Diagram{\Dmor{bcirc}1/0 (1,0) \Dmor{bcirc}0/1 (1,1)}=
\Diagram{\DMor{square}2/1 (1,1) {$\scriptstyle L$} \Dmor{bcirc}1/2 (2,-0.5) \Dmor{bcirc}1/0 (2.5,2) \Dmor{bcirc}0/1 (0.5,-0.5) \draw (2.5,0) -- (2.5,1.5);}=
\Diagram{\DMor{square}2/1 (1,0.5) {$\scriptstyle L$} \Dmor{bcirc}0/0 (0.5,-0.5)}
$$
\end{proof}

Let $X$ be an ordinary finite set equipped with some multiplication $\cdot\colon X\times X\to X$, which can be extended to $L\colon l^2(X)\otimes l^2(X)\to l^2(X)$. Then the $L_r$ and $L_l$ stand for the mappings $(g,h)\mapsto (g\cdot h,h)$, resp. $(g,h)\mapsto (g,g\cdot h)$ called sometimes \emph{Galois maps}. They being bijective mean that there is a unique division, so the operation defines a quasigroup.

Thus, to sum up, the pair $(X,L)$ essentially defines a \emph{finite quantum quasigroup}. There are several approaches to define quantum quasigroups in the literature already. The most general one is provided in \cite{Smi16}, where the author does not assume associativity for neither of the two products. Our definition is most similar to \emph{Hopf quasigroups} as defined by Klim and Majid \cite{KM10}, see also \cite[Theorem 2.5]{Brz10}. There are, however, a couple of differences. We do not require the existence of a unit for the multiplication $L$. On the other hand, we require the $*$ structure and the Frobenius structure for the algebra $C(X)$.

\begin{ex}
Quantum Latin squares over the classical space $X=\{1,\dots,n\}$ are exactly the classical $n\times n$ Latin squares. The correspondence goes by Gelfand duality and is proven in a similar way as the correspondence between groups and Hopf $*$-algebras assuming that the multiplication $m$ is commutative.
\end{ex}

\begin{ex}
Any finite quantum group $\Gamma$ is a quantum quasigroup and hence defines a quantum Latin square over $\Gamma$ by $L=\Delta^\dag$.
\end{ex}

\begin{rem}
The notion of quantum Latin squares was already introduced by Musto and Vicary in \cite{MV15}. But their definition is different and not compatible with ours. The point is that they are quantizing Latin squares in a different way: They keep the classical space of $n$ points $X=\{1,\dots,n\}$ and make the entries of the Latin square quantum. Consequently, their quantum Latin squares need not satisfy axioms (2) and (3). In contrast, we allow quantum spaces, but keep the strict axioms. This is similar to the situation with Hadamard matrices, where we also have two competing quantizations as discussed in \cite[Remark~2.6]{GroQHad}.

It would be possible to generalize both definitions and introduce \emph{quantum quantum Latin squares} by Definition~\ref{D.qLatin} excluding axioms (2) and (3) (replacing them with the assumption that the Galois maps are unitary). We will not do that here as the resulting structure would not yield any strongly regular quantum graphs.
\end{rem}

\begin{lem}\label{L.Latinsrg}
Let $L$ be a quantum Latin square over a finite quantum space $X$ with $n:=\eta^\dag\eta$. Then the following hold.
\begin{enumerate}
\item $(L^\dag L)^*=L^\dag L$ taken as a linear map $l^2(X\times X)\to l^2(X\times X)$.
\item $(L^\dag L)\bullet (L^\dag L)=L^\dag L$
\item $(L^\dag L)\bullet (I\otimes J)=I\otimes I=(I\otimes J)(L^\dag L)$.
\item $(L^\dag L)\bullet (J\otimes I)=I\otimes I=(J\otimes I)(L^\dag L)$.
\item $(L^\dag L)\bullet (I\otimes I)=I\otimes I=(I\otimes I)(L^\dag L)$.
\item $(L^\dag L)(I\otimes J)=(J\otimes J)=(I\otimes J)(L^\dag L)$.
\item $(L^\dag L)(J\otimes I)=(J\otimes J)=(J\otimes I)(L^\dag L)$.
\item $LL^\dag=nI$.
\end{enumerate}
\end{lem}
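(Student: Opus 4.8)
The plan is to verify the eight identities one at a time, in an order that lets each use the earlier ones: the ``structural'' identities (1) and (2) first, then the ``unit'' computations (6), (7), (8), then the Galois-dependent identities (3), (4), and finally (5), which is purely formal. Throughout I work inside the Frobenius $*$-algebra $C(X\times X)=C(X)\otimes C(X)$, with structure maps $m_{X\times X}=(m\otimes m)(\id\otimes\Sigma\otimes\id)$ and $m_{X\times X}^{\dag}=(\id\otimes\Sigma\otimes\id)(m^{\dag}\otimes m^{\dag})$ ($\Sigma$ the flip), so that $C(X\times X)$ is again special, $m_{X\times X}m_{X\times X}^{\dag}=\id$. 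A small remark used constantly: $L^{\dag}L$, $I\otimes I$, $I\otimes J$, $J\otimes I$, $J\otimes J$ are self-adjoint for both $\dag$ and $*$, so once the ``forward'' half of any identity is proved, the ``reversed'' half (factors of a $\bullet$, resp.\ of a composition, swapped) follows by applying $*$, resp.\ $\dag$, together with \eqref{eq.bulletinv}; thus the second equalities in (3)--(7) cost nothing. Identity~(1) is then immediate: by axiom~(1) of Definition~\ref{D.qLatin}, $L=L^{*}$ as a map $l^{2}(X\times X)\to l^{2}(X)$, and since $*$ is a covariant functor with $(A^{\dag})^{*}=(A^{*})^{\dag}$, $(L^{\dag}L)^{*}=(L^{*})^{\dag}L^{*}=L^{\dag}L$. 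Identity~(2) uses only axiom~(2) — $L^{\dag}$ is a $*$-homomorphism, i.e.\ $m_{X\times X}(L^{\dag}\otimes L^{\dag})=L^{\dag}m$ and dually $(L\otimes L)m_{X\times X}^{\dag}=m^{\dag}L$ — together with speciality of $C(X)$:
\[(L^{\dag}L)\bullet(L^{\dag}L)=m_{X\times X}(L^{\dag}\otimes L^{\dag})(L\otimes L)m_{X\times X}^{\dag}=L^{\dag}\,(mm^{\dag})\,L=L^{\dag}L.\]

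For the ``unit'' identities, axiom~(3) provides $L(\eta\otimes\id)=\eta\eta^{\dag}=L(\id\otimes\eta)$ and $L^{\dag}\eta=\eta\otimes\eta$ (equivalently $\eta^{\dag}L=\eta^{\dag}\otimes\eta^{\dag}$). For (8), compose the first of these with $\eta$ to get $L(\eta\otimes\eta)=\eta\eta^{\dag}\eta=n\eta$; dualising, this says $(\psi\otimes\psi)\circ L^{\dag}=n\psi$ (as $\psi_{X\times X}=\psi\otimes\psi$), and since $L^{\dag}$ is a $*$-homomorphism, $\langle L^{\dag}x,L^{\dag}y\rangle=(\psi\otimes\psi)L^{\dag}(x^{*}y)=n\langle x,y\rangle$, which is exactly $LL^{\dag}=nI$. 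For (6), write $I\otimes J=(\id\otimes\eta)(\id\otimes\eta^{\dag})$; then $(L^{\dag}L)(\id\otimes\eta)=L^{\dag}\bigl(L(\id\otimes\eta)\bigr)=L^{\dag}\eta\,\eta^{\dag}=(\eta\otimes\eta)\eta^{\dag}$, and post-composing with $\id\otimes\eta^{\dag}$ and using $\eta^{\dag}(\id\otimes\eta^{\dag})=\eta^{\dag}\otimes\eta^{\dag}$ gives $(L^{\dag}L)(I\otimes J)=(\eta\otimes\eta)(\eta^{\dag}\otimes\eta^{\dag})=J\otimes J$; (7) is the mirror image with $L(\eta\otimes\id)=\eta\eta^{\dag}$.

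Identities (3) and (4) are the delicate point, and I expect the main obstacle there. Classically $(L^{\dag}L)\bullet(I\otimes J)=I\otimes I$ is precisely the assertion that each row of $L$ is injective, and this really needs the quasigroup axiom~(4); it does not follow from the homomorphism/unit axioms (2)--(3) alone, which is exactly why axiom~(4) is imposed separately. The plan is to expand $(L^{\dag}L)\bullet(I\otimes J)=m_{X\times X}\bigl((L^{\dag}L)\otimes(\id\otimes\eta\eta^{\dag})\bigr)m_{X\times X}^{\dag}$ as a string diagram, use the unit laws to absorb the $\eta,\eta^{\dag}$ on one leg and $mm^{\dag}=\id$ to collapse a duplication on another, recognise the surviving diagram as the Galois map $L_{l}$ post-composed with $L_{l}^{\dag}$ (the ``row'' leg passing through untouched), and then invoke Proposition~\ref{P.Galois} — which says $L_{l}$ is invertible with $L_{l}^{-1}$ a scalar multiple of $L_{l}^{\dag}$ — to force the whole expression down to $I\otimes I$; identity~(4) is the same argument with $L_{r}$ in place of $L_{l}$. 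Finally (5) is formal: $\bullet$ on $C(X\times X)$ is associative, $(I\otimes J)\bullet(J\otimes I)=I\otimes I$, and $(I\otimes I)\bullet(J\otimes I)=I\otimes I$ (both short computations with $m_{X\times X}$ and the unit laws), so $(L^{\dag}L)\bullet(I\otimes I)=(L^{\dag}L)\bullet(I\otimes J)\bullet(J\otimes I)=(I\otimes I)\bullet(J\otimes I)=I\otimes I$ by (3). Besides the Galois step, the one thing needing care throughout is the $*$-structure mismatch between $l^{2}(X\times X)$ and $l^{2}(X)\otimes l^{2}(X)$ noted before Definition~\ref{D.qLatin}: the Schur product $\bullet$ and the involution $*$ in the statement are those of $C(X\times X)$, so the extra flip $\Sigma$ inside $m_{X\times X}$ and $m_{X\times X}^{\dag}$ has to be carried consistently whenever the axioms of $L$ (phrased on $l^{2}(X)\otimes l^{2}(X)$) are plugged in.
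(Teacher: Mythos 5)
Your proof is correct and follows essentially the same route as the paper: (1)--(2) from axioms (1)--(2) plus speciality, (6)--(8) from the unit laws of axiom (3), (3)--(4) by recognising the Schur products as $L_l^\dag L_l$ and $L_r^\dag L_r$ and invoking Proposition~\ref{P.Galois}, and (5) by factoring $I\otimes I=(I\otimes J)\bullet(J\otimes I)$. Your inner-product argument for (8) is a harmless repackaging of the paper's diagrammatic one, and your reading of the second equalities in (3)--(5) as Schur products with the factors swapped (rather than the compositions literally printed, which would contradict (6)--(7)) agrees with how the paper itself proves them, namely by applying~$*$.
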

\begin{proof}
\begin{enumerate}
\item This follows directly from axiom (1): $L=L^*$.
\item This can be easily computed from axiom (2).
\item The second equality $(I\otimes J)(L^\dag L)=I\otimes I$ follows from the unitarity $L_l^\dag L_l=I\otimes I$ formulated in Prop.~\ref{P.Galois}. We get the other one by applying the $*$.
\item The same as above: The first equality follows from $L_r^\dag L_r=I$ while the second is obtained by applying the $*$.
\item This is just a consequence of the two items above as $J$ is the identity with respect to the Schur product, so $I\otimes I=(I\otimes J)\bullet(J\otimes I)=(J\otimes I)\bullet(I\otimes J)$.
\item This follows by unitality of $L^\dag$ (axiom (2)).
\item The same as above.
\item This follows from $L_rL_r^\dag=I\otimes I$ by sandwiching with $\eta$ and $\eta^\dag$ in the second tensor factor.
\end{enumerate}
\end{proof}

Note that from (1) and (2), it follows that $L^\dag L$ defines a quantum graph. But, we are going to study a slightly different one:

\begin{prop}\label{P.Latinsrg}
Let $L$ be a quantum Latin square over a finite quantum space $X$. Then the following adjacency matrix defines a strongly regular quantum graph over the quantum space $X\times X$ with parameters $(n^2,3(n-1),n,6)$:
\begin{equation}
\label{eq.LatinSRG}
A=I\otimes J+J\otimes I+L^\dag L-3I\otimes I.
\end{equation}
\end{prop}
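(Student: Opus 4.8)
The plan is to decompose the proposed adjacency matrix as $A = R + C + M - 3\mathbb{I}$, where I write $\mathbb{I} := I\otimes I$ for the composition identity on $X\times X$, $\mathbb{J} := J\otimes J = \eta_{X\times X}\eta_{X\times X}^\dag$ for the Schur identity, $R := I\otimes J$, $C := J\otimes I$, and $M := L^\dag L$ viewed as a map $l^2(X\times X)\to l^2(X\times X)$ through $l^2(X\times X)\simeq l^2(X)\otimes l^2(X)$. Recall that on $X\times X$ the products and the $*$-involution act factorwise, that $\eta_{X\times X}^\dag\eta_{X\times X} = (\eta^\dag\eta)^2 = n^2$ (which will be the first strong-regularity parameter), and that the conventions of Lemma~\ref{L.Latinsrg} are exactly those of the $X\times X$ structure, so its eight items apply verbatim to $M$. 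The only facts I need about $R$ and $C$ are the factorwise identities $I\bullet I = I$, $J\bullet I = I$, $J\bullet J = J$, $IJ = J$ and $JJ = nJ$ in $C(X)$.

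First I would check that $A$ defines a simple quantum graph over $X\times X$. Self-adjointness for both involutions is immediate: $R^* = R = R^\dag$ and $C^* = C = C^\dag$ because $I$ and $J$ are self-adjoint in $C(X)$ and the involutions are factorwise, while $M^* = M$ by Lemma~\ref{L.Latinsrg}(1) and $M^\dag = (L^\dag L)^\dag = M$; hence $A^* = A = A^\dag$. For the no-loops condition, expand $A\bullet\mathbb{I} = R\bullet\mathbb{I} + C\bullet\mathbb{I} + M\bullet\mathbb{I} - 3\,\mathbb{I}\bullet\mathbb{I}$; here $R\bullet\mathbb{I} = (I\bullet I)\otimes(J\bullet I) = \mathbb{I}$, likewise $C\bullet\mathbb{I} = \mathbb{I}$, and $M\bullet\mathbb{I} = \mathbb{I}$ by Lemma~\ref{L.Latinsrg}(5), so $A\bullet\mathbb{I} = 0$; applying $*$ together with \eqref{eq.bulletinv} gives $\mathbb{I}\bullet A = 0$ as well. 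Then $A\bullet A = A$ follows by bilinear expansion, using $R\bullet R = R$, $C\bullet C = C$, $M\bullet M = M$ (Lemma~\ref{L.Latinsrg}(2)), $R\bullet C = \mathbb{I}$, $M\bullet R = \mathbb{I}$ (Lemma~\ref{L.Latinsrg}(3)), $M\bullet C = \mathbb{I}$ (Lemma~\ref{L.Latinsrg}(4)), each cross term being symmetric because $(X\bullet Y)^* = Y^*\bullet X^*$ and all pieces are $*$-fixed; the whole expansion collapses to $R + C + M - 3\mathbb{I} = A$.

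It remains to compute $A^2$. Writing $S := R + C + M = A + 3\mathbb{I}$, I would use $R^2 = I\otimes J^2 = nR$, $C^2 = nC$, $M^2 = L^\dag(LL^\dag)L = n\,L^\dag L = nM$ by Lemma~\ref{L.Latinsrg}(8), together with $RC = CR = J\otimes J = \mathbb{J}$, $RM = MR = \mathbb{J}$ (Lemma~\ref{L.Latinsrg}(6)) and $CM = MC = \mathbb{J}$ (Lemma~\ref{L.Latinsrg}(7)), to obtain $S^2 = nS + 6\mathbb{J}$, whence
$$A^2 = (S-3\mathbb{I})^2 = S^2 - 6S + 9\mathbb{I} = (n-6)S + 6\mathbb{J} + 9\mathbb{I} = (n-6)A + (3n-9)\mathbb{I} + 6\mathbb{J}.$$
Matching this against $k\mathbb{I} + \lambda A + \mu(\mathbb{J} - \mathbb{I} - A) = (k-\mu)\mathbb{I} + (\lambda-\mu)A + \mu\mathbb{J}$ forces $\mu = 6$, $\lambda - 6 = n-6$, $k - 6 = 3n - 9$, i.e. $(k,\lambda,\mu) = (3(n-1),n,6)$, which together with $\eta_{X\times X}^\dag\eta_{X\times X} = n^2$ is exactly the claimed list of parameters.

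There is no real obstacle here: once Lemma~\ref{L.Latinsrg} is in hand, the argument is pure bookkeeping. The only point that needs a little care is keeping the $X$-level units ($I$, $J$ on $l^2(X)$, with $JJ = nJ$) apart from the $X\times X$-level units ($\mathbb{I} = I\otimes I$, $\mathbb{J} = J\otimes J$), and remembering — as in the remark preceding Definition~\ref{D.qLatin} — that it is the factorwise $*$-structure of $X\times X$ under which the lemma is formulated, so its statements can be quoted directly without re-deriving anything about $L$.
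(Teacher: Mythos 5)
Your proof is correct and follows exactly the route the paper intends: the paper's own proof consists of the single remark that $A\bullet A=A$, $A^*=A$, and $A^2=3(n-1)I\otimes I+nA+6(J\otimes J-A-I\otimes I)$ are ``straightforward to do using Lemma~\ref{L.Latinsrg}'', and your expansion via $R$, $C$, $M$ and the eight items of that lemma is precisely that bookkeeping carried out in full (you even verify the simplicity conditions $A=A^\dag$ and $A\bullet(I\otimes I)=0$ that the paper leaves implicit).
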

\begin{proof}
We need to check that $A\bullet A=A$, $A^*=A$, and $A^2=3(n-1)I\otimes I+n A+6(J\otimes J-A-I\otimes I)$. All this is straightforward to do using Lemma~\ref{L.Latinsrg}.
\end{proof}

It is worth having look at the eigenvalues of such a strongly regular graph. Using Formula~\eqref{eq.SRGeigenmatrix} and substituting the above given parameters, we get
\begin{equation}\label{eq.LatinP}
P=\begin{pmatrix}
1&3(n-1)&(n-1)(n-2)\\
1&n-3&-(n-2)\\
1&-3&2
\end{pmatrix}
\end{equation}
In addition, a straightforward computation gives $P^2=n^2I$, so $Q=P$. This means that all such strongly regular quantum graphs corresponding to quantum Latin squares are self-dual. Hence, for a fixed size of the Latin square $n$, they are also all pairwise dual to each other.

Finally, we are going to show that if the quantum Latin square comes from a finite quantum group, then the associated strongly regular quantum graph is Cayley. Given a quantum group $\Gamma$, we denote by $\Gamma^{\rm op}$ the quantum group with the opposite comultiplication (that is, the associated Hopf $*$-algebra is coopposite, not opposite). By opposite compultiplication, we mean $\Delta^{\rm op}=\Sigma\Delta$, where $\Sigma\colon x\otimes y\mapsto y\otimes x$. We will say that $\Gamma$ is \emph{abelian} if $\Delta^{\rm op}=\Delta$ (that is, if the associated Hopf algebra is cocommutative).

\begin{prop}
Let $\Gamma$ be a finite quantum group. Then the strongly regular graph corresponding to the quantum Latin square $L=\Delta^\dag$ is a Cayley quantum graph corresponding to the quantum group $\Gamma^{\rm op}\times\Gamma$ and the element
$$x=\epsilon^\dag\otimes\eta+\eta\otimes\epsilon^\dag+\epsilon\Delta-3\epsilon^\dag\otimes\epsilon^\dag\in l^2(\Gamma^{\rm op}\times\Gamma).$$
\end{prop}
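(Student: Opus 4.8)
The plan is to unfold both sides as linear maps $l^2(\Gamma)^{\otimes2}\to l^2(\Gamma)^{\otimes2}$ and match them summand by summand. Since the quantum Latin square here is $L=\Delta^\dag$, we have $L^\dag L=\Delta\Delta^\dag$, so by \eqref{eq.LatinSRG} the adjacency matrix of the associated strongly regular quantum graph is $A=I\otimes J+J\otimes I+\Delta\Delta^\dag-3\,I\otimes I$. On the other side, $x\mapsto A_x$ is linear, so it suffices to evaluate $A_y$ over the quantum group $\Gamma^{\rm op}\times\Gamma$ for each of the four summands of $x$. The computational input is the explicit convolution on $\Gamma^{\rm op}\times\Gamma$: from $\Delta_{\Gamma^{\rm op}\times\Gamma}=(\id\otimes\Sigma\otimes\id)(\Sigma\Delta\otimes\Delta)$ one reads off that for $u_1,u_2,v_1,v_2\in l^2(\Gamma)$,
$$(u_1\otimes u_2)*(v_1\otimes v_2)=(v_1*u_1)\otimes(u_2*v_2),$$
where the convolution on the right is that of $\Gamma$ itself.

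For the three decomposable summands this is immediate from the convolution formula together with $v*\epsilon^\dag=\epsilon^\dag*v=v$ (the unit property) and $\eta*v=v*\eta=\eta^\dag(v)\,\eta$ (which is the statement $A_\eta=\eta\eta^\dag=J$, equivalently invariance of the Haar functional $\eta^\dag$, as recalled around Example~\ref{E.GammaOp}). Indeed $A_{\epsilon^\dag\otimes\eta}(v_1\otimes v_2)=(v_1*\epsilon^\dag)\otimes(\eta*v_2)=v_1\otimes\eta^\dag(v_2)\eta=(I\otimes J)(v_1\otimes v_2)$, and similarly $A_{\eta\otimes\epsilon^\dag}=J\otimes I$ and $A_{\epsilon^\dag\otimes\epsilon^\dag}=I\otimes I$. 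Thus the whole statement collapses to the single identity $A_{\epsilon\Delta}=\Delta\Delta^\dag=L^\dag L$, where $\epsilon\Delta$ denotes the element $\Delta(\epsilon^\dag)\in l^2(\Gamma)^{\otimes2}$ (under the identification of a functional with its representing vector, $\epsilon$ is represented by the convolution unit $\epsilon^\dag$).

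For this identity, expand $\Delta(\epsilon^\dag)=\sum\epsilon^\dag_{(1)}\otimes\epsilon^\dag_{(2)}$ and apply the convolution formula above:
$$A_{\epsilon\Delta}(v_1\otimes v_2)=\sum(v_1*\epsilon^\dag_{(1)})\otimes(\epsilon^\dag_{(2)}*v_2).$$
I would then recognise the right-hand side as the product $(v_1\otimes\epsilon^\dag)\cdot\Delta(\epsilon^\dag)\cdot(\epsilon^\dag\otimes v_2)$ taken componentwise in the convolution algebra $\C\Gamma$, and contract it using the Frobenius law for $\C\Gamma$ in its bimodule form $\Delta(ab)=(a\otimes\epsilon^\dag)\cdot\Delta(\epsilon^\dag)\cdot(\epsilon^\dag\otimes b)$ --- equivalently $\Delta\Delta^\dag=(\Delta^\dag\otimes\id)(\id\otimes\Delta)$. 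This is legitimate because, as established in the ``Background in Hopf algebras'' section, the passive dual $\C\Gamma$ with Haar functional $\epsilon$ is a special symmetric Frobenius $*$-algebra, so the Frobenius law \eqref{eq.Flaw} holds for its structure; the normalisation constant $n$ from $\Delta^\dag\Delta=n\,\id$ (Proposition~\ref{P.dagdual}) enters this identity on both sides and cancels. We obtain $A_{\epsilon\Delta}(v_1\otimes v_2)=\Delta(v_1*v_2)=\Delta\Delta^\dag(v_1\otimes v_2)$; summing the four contributions gives $A_x=A$. The same argument can also be carried out entirely in the black/white diagrammatic calculus, the decisive step being an isotopy provided by the Frobenius law for the white (dual) structure.

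The main obstacle is precisely this last identity: one has to pin down $\epsilon\Delta$ as $\Delta(\epsilon^\dag)$ correctly, track the order reversal produced by the opposite comultiplication of $\Gamma^{\rm op}$, and apply the Frobenius law for the dual quantum group while verifying that the factor $n$ does not interfere; everything else is bookkeeping that reduces to the classical computation. I would also note in passing that $A$ and $A_x$ a priori lie in the same space of linear maps $l^2(\Gamma)^{\otimes2}\to l^2(\Gamma)^{\otimes2}$, so the mismatch between the $*$-structures of $X\times X$ and $l^2(X)\otimes l^2(X)$ pointed out before Definition~\ref{D.qLatin} plays no role in the equality $A=A_x$ (it matters only for $A_x$ being a legitimate adjacency matrix, which Proposition~\ref{P.Latinsrg} already ensures).
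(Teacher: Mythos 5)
Your proposal is correct and follows essentially the same route as the paper: reduce to a term-by-term comparison, dispose of the three decomposable summands via the unit property and invariance of the Haar functional, and establish the key identity $A_{\epsilon\Delta}=\Delta\Delta^\dag$ by the Frobenius law for the dual (white) structure. The only difference is presentational — you work in Sweedler notation where the paper argues diagrammatically — and your bookkeeping of the $\Gamma^{\rm op}$ order reversal and of the normalisation constant $n$ is accurate.
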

\begin{proof}
We need to show that $A_x$ with $x$ given as above equals to Eq.~\eqref{eq.LatinSRG} with $L=\Delta$. We can do that term by term, most being immediatelly clear as $A_{\epsilon^\dag}=I$ and $A_\eta=J$ in both $l^2(\Gamma)$ and $l^2(\Gamma^{\rm op})$. It remains to prove that $A_{\epsilon\Delta}=\Delta\Delta^\dag$ in $l^2(\Gamma^{\rm op}\otimes\Gamma)$.

We can show that using the Frobenius law:
$$
\Diagram{\Dmor{circ}2/1 (1,1) \Dmor{circ}2/1 (3,1)
         \Dmor{circ}[/-0-] (1.5,-0.5) \draw (2.5,0.5) -- (2.5,0); \draw (3.5,0.5) -- (3.5,-.5);
         \draw (1.5,0.5) .. controls (1.5,0) and (0.5,0.5) .. (0.5,0);
         \draw (0.5,0.5) .. controls (0.5,0) and (2.5,0) .. (2.5,-.5);}
=
\Diagram{\Dmor{circ}2/1 (1,1) \Dmor{circ}2/1 (3,1) \Dmor{circ}0/2 (2,0)
         \draw (0.5,0.5) -- (0.5,-0.5); \draw (3.5,0.5) -- (3.5,-0.5);}
=
\Diagram{\Dmor{circ}2/1 (1,1) \Dmor{circ}1/2 (2,0)
         \draw (0.5,0.5) -- (0.5,-0.5); \draw (2.5,0.5) -- (2.5,1.5);}
=
\Diagram{\Dmor{circ}1/2 (1,1) \Dmor{circ}2/1 (1,0)}
$$
\end{proof}

\begin{ex}
Let $\Gamma$ be a finite (possibly non-commutative) group. Then its dual $\hat\Gamma$ is an abelian quantum group, so $\hat\Gamma^{\rm op}=\hat\Gamma$. If we denote $l^2(\hat\Gamma)=\{\hat e_g\mid g\in\Gamma\}$, then the comultiplication on $\hat\Gamma$ maps $\hat e_g\mapsto \hat e_g\otimes\hat e_g$ and the unit is given by $\hat e_e$, where the index $e$ stands for the group identity. Consequently, we get a strongly regular quantum graph on the quantum space $\hat\Gamma\times\hat\Gamma$ with adjacency matrix
$$
A\colon \hat e_g\otimes\hat e_h\mapsto (n\delta_{ge}+n\delta_{he}+n\delta_{gh}-3)\hat e_g\otimes\hat e_h,
$$
where $n=|\Gamma|$.

That is, the adjacency matrix is diagonal in the basis $(\hat e_g\otimes\hat e_h)$. The corresponding eigenspaces are $V_0=\{(\hat e_e\otimes\hat e_e)\}$ (i.e. multiplicity 1) with eigenvalue $3(n-1)$, $V_1=\{(\hat e_e\otimes\hat e_h),(\hat e_g\otimes\hat e_e),(\hat e_g\otimes\hat e_g)\mid g\neq e\}$ (multiplicity $3(n-1)$) with eigenvalue $n-3$ and, finally, $V_2=\{(e_g\otimes e_h)\mid g,h\neq e\}$ (multiplicity $(n-1)(n-2)$) with eigenvalue $-3$. This exactly corresponds to the general computation of from Equation~\eqref{eq.LatinP} (eigenvalues of $A$ should be the second column of $P$; the multiplicities should be the first row of $Q=P$).

The strongly regular quantum graph we constructed here is actually \emph{the} dual (in the sense of Section~\ref{secc.trans}) of the classical strongly regular graph corresponding to the classical Latin square of $\Gamma$. Indeed, the classical graph is the Cayley graph of $\Gamma\times\Gamma$ corresponding to the generating set $S=\{(e,g),(g,e),(g,g)\mid g\in\Gamma\}$. The dual eigenvalues coincide with the eigenvalues and are again given by the matrix $Q=P$ from Equation~\eqref{eq.LatinP}. Consequently, by Proposition~\ref{P}, the dual association scheme is exactly given as above.
\end{ex}

\bibliographystyle{halpha}
\bibliography{mybase}

\end{document}